\newcommand{\ZZ}{\mathbb{Z}}
\newcommand{\CC}{\mathbb{C}}
\newcommand{\QQ}{\mathbb{Q}}
\newcommand{\Glie}{\mathfrak{g}}
\newcommand{\U}{\mathcal{U}}
\newtheorem{thm}{Theorem}[section]
\newtheorem{defi}[thm]{Definition}
\newtheorem{cor}[thm]{Corollary}
\newtheorem{prop}[thm]{Proposition}
\newtheorem{rem}[thm]{Remark}
\newtheorem{ex}[thm]{Example}
\title{Toroidal Grothendieck rings and cluster algebras}
\author{Laura Fedele and David Hernandez}
\address{Universit\'e de Paris, Institut de
  Math\'ematiques de Jussieu-Paris Rive Gauche, CNRS, F-75013 Paris, France}
\email{laura.fedele@imj-prg.fr}
\email{david.hernandez@u-paris.fr}
\begin{document} 

\begin{abstract} 
We study deformations of cluster algebras with several quantum parameters, called
toroidal cluster algebras, which naturally appear in the study of Grothendieck rings of representations 
of quantum affine algebras. In this context, we construct toroidal Grothendieck rings and we establish these
are flat deformations of Grothendieck rings. We prove that for a family of monoidal categories
$\mathscr{C}_1$ of simply-laced quantum affine algebras categorifying finite-type cluster algebras, 
the toroidal Grothendieck ring has a natural structure of a toroidal cluster algebra.
\end{abstract}

\maketitle

\vskip 4.5mm

\tableofcontents

\section{Introduction}

Monoidal categories of finite-dimensional representations of affine quantum groups 
and Yangians have been studied from various points of view. The reader may refer to \cite{k, o} for recent
important developments. Deformations of the corresponding Grothendieck rings have been introduced from
the study of quantum $W$-algebras \cite{Fre:def}, of convolution rings obtained from quiver varieties \cite{Nak:quiver, VV:qGro} and 
of vertex operators \cite{H1}. One important application is the proof by Nakajima \cite{Nak:quiver} 
that one can calculate algorithmically the character of simple finite-dimensional representations of simply-laced quantum affine algebras. 
Although the deformations above have similarities, they give rise to different Poisson structures.
Moreover, the approach by the second author in \cite{H1} gives naturally additional deformation parameters 
whose corresponding deformations have not been exploited so far.
We propose to consider multi-deformed Grothendieck rings which encapsulate 
simultaneously various deformations of the Grothendieck rings.

To do this, let us recall that cluster theoretic methods to study the representation
theory of affine quantum groups have been introduced in \cite{hl}.
Indeed, Grothendieck rings of certain monoidal categories of such representations 
have a natural cluster algebra structure (see \cite{hlrev} for a recent review). 
That is why cluster algebras and their deformations give a natural framework to study
deformations of such Grothendieck rings.

In this perspective, we study toroidal cluster algebras, that is deformations of cluster algebras 
with several quantum parameters. We investigate several general properties of toroidal cluster algebras, 
some of them being new as far as we know. We construct many examples, partly from our study of quantum affine
algebras, but the positive part of certain multi-parameter quantum groups provide also examples.
The notion of a quantum cluster algebra with more than one quantum parameter also appeared in 
the work of Goodearl-Yakimov \cite{GYa, GYb}, with a different motivation related to the study 
of quantum nilpotent algebras. 

Then we construct toroidal Grothendieck rings, that is 
certain deformations of Grothendieck rings with several variables. We follow
the approach to quantum Grothendieck rings in \cite{H1}, that is we 
first prove the existence of classes of fundamental representations in a certain 
quantum torus (with several parameter in our case) by using a relevant algorithm. 
Then it turns out that the deformation of the Grothendieck
ring is the subring of the ambient quantum torus generated by the classes of fundamental representations.
But in the toroidal setting we have to deal with certain specific problems so that the deformation is flat. Indeed, 
the ordered product of classes of fundamental representations, that are called classes
of standard modules, do not give a basis, in opposition to the classical and 
quantum cases. We overcome this difficulty by introducing certain new relations in the ambient (multi-parameter quantum)
tori, and we establish the flatness of the deformation in this context. This is our first main result (Theorem \ref{mainflat}).

We study toroidal Grothendieck rings associated to different monoidal subcategories of the category $\mathscr{C}$ of finite-dimensional representations of the 
quantum affine algebras. In particular we focus on remarkable monoidal subcategories $\mathscr{C}_1$ introduced in \cite{hl} and which categorify cluster algebras of finite-type. 
Our second main result (Theorem \ref{thm:toroidaliso}) states that the toroidal Grothendieck ring of $\mathscr{C}_1$ gives after a suitable specialization a toroidal cluster algebra with two independent parameters. 
Moreover, the classes of fundamental representations correspond to toroidal cluster variables, certain distinguished elements of the toroidal cluster algebra.
From the results in \cite{HL:qGro, hljems, b, Q}, it was known that the deformation of Grothendieck rings with 
one quantum parameter corresponds to a quantization of the cluster algebra structure on commutative Grothendieck rings. 
It is surprising to us that the dependance in certain additional parameters also 
comes from certain quantum cluster algebra structures. Besides, our result gives an incarnation
 of a toroidal cluster algebra for each finite cluster type. 

Other interesting phenomena arise from the examples we consider: 
for instance, from the toroidal Grothendieck ring of the category $ \mathscr{C}_{\mathcal{Q}}$ of \cite{HL:qGro} 
in type $A_2$ one can recover the positive part of a multi-parameter quantum group.

To sum-up, the main motivation for our work is threefold: we aim at giving a natural framework to handle the various 
known deformations of Grothendieck rings of representations of quantum affine algebras (as well as multi-parameters quantum groups),
involving natural deformations of important relations such as $T$-systems or Serre relations. We also aim at obtaining natural examples 
of toroidal cluster algebras, that are deformations of cluster algebras with multi-parameters. Ultimately we hope that these new structures 
will be useful to understand better the intricate structure of categories of representations of quantum affine algebras. 
We hope this work gives new motivations to study the theory of toroidal cluster algebras.

The paper is organized as follows.

In Section \ref{un}, we give the setup for toroidal cluster algebras and discuss 
several results for quantum cluster algebras which can be naturally extended to the toroidal setting : the Laurent
phenomenon (Theorem \ref{thm:laurentphen}), the positivity (see Theorem \ref{pos}) and the invariance of the exchange graph (see Theorem \ref{thm:exchgraph}). 
In Section \ref{deux} we consider multi-parameter quantum tori which appear naturally in the study of the representation theory 
of quantum affine algebras. In Section \ref{trois}, we give a brief review on finite-dimensional representations of quantum affine algebras.
In Section \ref{quatre}, we introduce multi-parameter deformations of the Grothendieck ring of various monoidal categories : 
we introduce a specific quotient of a multi-parameter quantum torus (Definition \ref{quotientorus}) in which we construct the
toroidal Grothendieck ring (Definition \ref{torgr}) which is proved to be a flat deformation (Theorem \ref{mainflat}). 
We prove that the classes of fundamental representations provide a generating family of the toroidal Grothendieck ring (Proposition \ref{fundgen}).
In Section \ref{cinq}  we establish that toroidal Grothendieck rings
of certain monoidal categories $\mathscr{C}_1$ are toroidal cluster algebras in all $ADE$-types (Theorem \ref{thm:toroidaliso}). 
In Section \ref{seven}, we study the quasi-commutation relation for remarkable pairs in the multi-parameters quantum tori : we 
prove it depends on a single parameter, even in the toroidal setting. In Section \ref{eight} we discuss various questions we would like to address in the future in relation to the 
main results of this paper.

{\bf Acknowledgment : } The authors would like to thank the referee for his comments and remarks. 
The authors would like to thank also Martina Lanini, Bernard Leclerc and Bernhard Keller for discussions and references. 
The authors were supported by the European Research Council under the European Union's Framework Programme H2020 with ERC Grant Agreement number 647353 Qaffine.

\section{Toroidal cluster algebras setup}\label{un}

We give the setup for toroidal cluster algebras, mimicking the definition of quantum cluster algebras \cite{bz}. The more recent \cite{GYa} gives an alternative definition and a uniform approach. Several results for quantum cluster algebras can be naturally extended to the toroidal setting : the Laurent
phenomenon established in \cite{bz, GYa} (see Theorem \ref{thm:laurentphen}), the positivity from the main results in \cite{d} (see Theorem \ref{pos}) 
and as a consequence the invariance of the exchange graph (see Theorem \ref{thm:exchgraph}).

\subsection{Setup}\label{sec:setup}

Let $1\leq m\leq n$ be integers. We also fix $r\geq 1$ which we call the number of parameters. 
These parameters are formal variables that we denote by $t_1, t_2, \cdots, t_r$.

We consider a based multi-parameter quantum torus $\mathcal{T}$. It is the algebra over $\ZZ[t_1^{\pm \frac{1}{2}},\cdots, t_r^{\pm \frac{1}{2}}]$ with generators 
$X_1,X_2, \cdots, X_n$ that quasi-commute, that is they are subject to relations : 
$$X_i * X_j = \left(\prod_{1\leq a \leq r} t_a^{\Lambda_a(i,j)}\right)  X_j * X_i.$$
Here the 
$$\Lambda_a : \{1,\cdots, n\}^2\rightarrow \ZZ$$ 
are skew-symmetric maps. The matrices $(\Lambda_a(i,j))_{1\leq i,j\leq n}$ are the corresponding quasi-commutation matrices.
$\mathcal{T}$ is an Ore domain, and it is contained in its skew-field of fractions $\mathcal{F}$.

We remark that in \cite{GYa} the quasi-commutation relations are denoted by $ X_i * X_j = q_{ij} X_j * X_i $, where the $q_{ij}$'s are invertible elements in a base field $\mathbb{K}$, not necessarily given by powers of a same element \textit{q} (the latter special situation 
is called \textit{uniparameter} quantum torus case).
Thus, our notation compares to the one in \cite{GYa} if we take $\mathbb{K} = \mathbb{Q}(t_1^\frac12, \ldots, t_r^\frac12)$, $ D = \mathbb{Z}[t_1^{\pm \frac12},\ldots, t_r^{\pm \frac12}]$ and each $q_{ij} $ corresponds to the product $\prod_{1\leq a \leq r} t_a^{\Lambda_a(i,j)}$.
Our choice of working with the parameters $ t_a $ is motivated by our examples where the quantum parameters naturally appear in this form. 

The datum of $(\Lambda_a)_{1\leq a\leq r}$ is equivalent to the datum of $r$ Poisson brackets $\{,\}_a$ on the commutative polynomial ring $\ZZ[X_i]_{1\leq i\leq n}$ :
$$\{X_i, X_j\}_a = \Lambda_a(i,j) X_iX_j.$$
These Poisson brackets $\{,\}_a$ are compatible, that is any linear combination of the Poisson brackets $\{,\}_a$ is a Poisson bracket.

As for quantum tori with one quantum parameter, we have a $\ZZ[t_1^{\pm \frac{1}{2}},\cdots, t_r^{\pm \frac{1}{2}}]$-basis of the quantum torus given by commutative monomials
$$\prod_{1\leq i\leq n} X_i^{u_i} =  \left( \prod_{1\leq a\leq r}  t_a^{\frac{1}{2}\sum_{1\leq i < j\leq n}u_i u_j\Lambda_a(j,i)} \right)  \underset{1\leq i\leq n}{\overset{\rightarrow}{\mbox{\Large *}}} X_i^{u_i},$$
where the $u_i$ are arbitrary integers. In the following we will use the notation $\prod$ for commutative monomials as in this last formula.

For $1\leq a \leq r$, we have a unique specialization morphism sending commutative monomials to commutative monomials
$$\pi_{\mathcal{T},a} : \mathcal{T} \rightarrow \mathcal{T}_a$$
where $\mathcal{T}_a$ is the quantum torus defined from $\mathcal{T}$ by setting all $t_{a'} = 1$ if $a'\neq a$.

\begin{defi} A toroidal seed in $\mathcal{F}$ is a collection 
$$\mathcal{S} = (Y_1,\cdots, Y_n; \widetilde{B})$$ 
where the $Y_i\in \mathcal{F}$ and $\widetilde{B}$ is a skew-symmetric $n\times m$ integer matrix satisfying the following properties : 

(i) the $Y_i$ are algebraically independent, generate the field $\mathcal{F}$ and quasi-commute with quasi-commutation matrices denoted by $\Lambda_{a,\mathcal{S}}$.

(ii) for any $1\leq a\leq r$, the $m\times n$-matrix $\widetilde{B}^T \Lambda_{a,\mathcal{S}}$ has an $m\times m$-left block diagonal of constant 
sign and an $m\times (n-m)$ right block equal to zero.
If all entries of the diagonal left block are constant equal to $ k \in \mathbb{Z}_{>0}$, we will use the shorthand $\widetilde{B}^T \Lambda_{a,\mathcal{S}} = \begin{pmatrix} k \,\text{Id}_m \mid 0 \end{pmatrix}$.
\end{defi}

The $Y_i$ are called the toroidal cluster variables of the toroidal seed. We denote by $\mathcal{T}_{\mathcal{S}}$ the quantum torus they generate.

\begin{rem}\label{prere} (i) The second condition means that for $1\leq a\leq r$, $(\widetilde{B}, \Lambda_{a,\mathcal{S}})$ is a compatible pair in the sense of \cite{gsv, bz}. 
In particular $\widetilde{B}$ has full-rank and its principal part $(\widetilde{B})_{1\leq i,j\leq m}$ is skew-symmetrizable. If $r = 1$, we recover the definition 
of a quantum seed.

(ii) The maximal number of independent parameters, that is the dimension of the space generated by the $ \Lambda_{a,\mathcal{S}}$, is known. The matrix $\widetilde{B}$ is naturally associated to a quiver $Q$ with $n$ vertices. It admits a subquiver $Q'$ associated to the principal part of $\widetilde{B}$. By \cite[4.1.3.]{gsv} the maximal number of independent parameters is equal to the number of 
connected components of $Q'$ plus $\begin{pmatrix}n- m \\ 2  \end{pmatrix}$. In most cases we will study, $Q'$ will be connected, and this number will be equal to $1 + \begin{pmatrix}n- m \\ 2  \end{pmatrix}$. 

(iii) The number $n - m$ is the number of coefficients in the standard terminology of cluster algebras, that is the variables $ (Y_i)_{m+1\leq i\leq n}$. The quantity $\begin{pmatrix}n- m \\ 2  \end{pmatrix}$ corresponds to the number of quasi-commutation relations between the coefficients.

(iv) For $1\leq a\leq r$, we may set  
$$t_1 = \cdots = t_{a-1} = t_{a+1} = \cdots = t_r = 1$$ 
in $\mathcal{T}_{\mathcal{S}}$, that is we consider a
corresponding specialization map $\pi_{\mathcal{T}_\mathcal{S},a}$ as above. We get a quantum seed of quantum parameter $t_a$ in the sense of \cite{bz}.
More generally, for any $(a_1,\cdots, a_r)\in (\ZZ_{>0})^r$ and $t$ an indeterminate, the specialization 
$$(t_1,\cdots,t_r)\mapsto (t^{a_1},\cdots, t^{a_r})$$
defines a quantum seed of quantum parameter $t$. Indeed the linear combination 
$$\{,\} = \sum_{1\leq i\leq r}a_i\{,\}_i$$ 
is a Poisson-bracket and the corresponding skew-symmetric $\Lambda$ is compatible with $\widetilde{B}$.

(v) The results in this paper (except for Theorem \ref{pos}) can be straightforwardly
generalized to those for skew-symmetrizable toroidal cluster algebras, that is with $\widetilde{B}$ skew-symmetrizable.
\end{rem}

For a toroidal seed $\mathcal{S} = (Y_1,\cdots, Y_n; \widetilde{B})$ and $1\leq k\leq m$, the toroidal mutation in the direction $k$ 
$$\mu_k(\mathcal{S}) = (Y_1,\cdots, Y_{k-1}, Y_k',Y_{k+1},\cdots,Y_n; \widetilde{B}'),$$
is defined in the following way : $\widetilde{B}' = \mu_k(\widetilde{B})$ is obtained by the usual mutation rule \cite{fz} : 
\begin{equation}\label{20190801:eq2}
\widetilde{B}_{i,j}' = \begin{cases}-\widetilde{B}_{i,j}  &\text{if $i = k$ or $j = k$,}
\\\widetilde{B}_{i,j} + \widetilde{B}_{i,k}\widetilde{B}_{k,j}   & \text{if $\widetilde{B}_{i,k} > 0$ and $\widetilde{B}_{k,j} > 0$,}
\\ \widetilde{B}_{i,j} - \widetilde{B}_{i,k}\widetilde{B}_{k,j}  & \text{if $\widetilde{B}_{i,k} < 0$ and $\widetilde{B}_{k,j} < 0$,}
\\ \widetilde{B}_{i,j} & \text{otherwise.}  \end{cases}
\end{equation}
The new toroidal cluster variable $Y_k'$ is obtained by the toroidal mutation rule : 
\begin{equation}\label{mutrel}Y_k' * Y_k = u \prod_{b_{i,k} > 0} Y_i^{b_{i,k}}+ v \prod_{b_{i,k} < 0} Y_i^{-b_{i,k}}.\end{equation}
Here $u, v$ are the Laurent-monomials in the quantum parameters $t_1^{\frac{1}{2}},\cdots, t_r^{\frac{1}{2}}$ defined by : 
$$u^2 Y_k * \prod_{b_{i,k} > 0} Y_i^{b_{i,k}} = \prod_{b_{i,k} > 0} Y_i^{b_{i,k}} * Y_k 
\text{ and }v^2 Y_k * \prod_{b_{i,k} < 0} Y_i^{-b_{i,k}} = \prod_{b_{i,k} < 0} Y_i^{-b_{i,k}} * Y_k.$$
Moreover, the quasi-commutation matrices $\Lambda_{a,\mathcal{S}}$ mutate according to the following formula:
\begin{equation}\label{20190801:eq1}
\Lambda_{a,\mathcal{S}}' = \mu_k(\Lambda_{a,\mathcal{S}}) = E_k^T \Lambda_{a,\mathcal{S}} E_k\,,
\end{equation}
where $E_k$ is the $ n \times n $ matrix with entries
$$ (E_k)_{i,j} = \begin{cases} \delta_{ij} & \text{if } j \neq k, \\ -1 & \text{if } i = j = k, \\ \text{max}(0,-B_{i,k}) & \text{if } i \neq j = k \end{cases} $$

Both the results of Propositions \ref{prop:mutationtoroidalseed}, \ref{prop:mutationinvolutive} below also appeared in \cite{GYa}. We include here a proof for completeness.
\begin{prop}\label{prop:mutationtoroidalseed}
 The toroidal mutation of a toroidal seed in $\mathcal{F}$ is a toroidal seed in $\mathcal{F}$.
\end{prop}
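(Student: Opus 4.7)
The plan is to verify the two toroidal-seed conditions for $\mu_k(\mathcal{S}) = (Y_1,\ldots,Y_k',\ldots,Y_n; \widetilde{B}')$ by leveraging the fact that both mutation rules---the one for $\widetilde{B}$ and the formula $\Lambda'_{a,\mathcal{S}} = E_k^T \Lambda_{a,\mathcal{S}} E_k$---act independently on each parameter $a$. The argument is then essentially a parameter-by-parameter application of the Berenstein-Zelevinsky analysis of a compatible pair under mutation. Indeed, by Remark \ref{prere}(i),(iv), for each $a$ the pair $(\widetilde{B}, \Lambda_{a,\mathcal{S}})$ is a compatible pair in the sense of \cite{bz}, so the known single-parameter results can be invoked either directly or after specialization via $\pi_{\mathcal{T}_\mathcal{S},a}$.

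First I would show that the new cluster variable $Y_k'$ is well defined, i.e.\ that the Laurent monomials $u, v$ in $t_1^{1/2},\ldots,t_r^{1/2}$ required by the toroidal mutation rule exist. For each $a$, the ratio by which $Y_k * \prod_{b_{i,k}>0} Y_i^{b_{i,k}}$ and its reverse differ is a single power of $t_a$, whose exponent $\sum_{i:\,b_{i,k}>0} b_{i,k} \Lambda_{a,\mathcal{S}}(i,k)$ is an integer; hence $u$ is uniquely determined as a Laurent monomial in the $t_a^{1/2}$, and similarly for $v$.

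Next I would verify the new quasi-commutation relations. For $i, j \neq k$ the $Y_i, Y_j$ retain their old relations, so only the quasi-commutations of $Y_k'$ with the other $Y_j$ need to be computed. Multiplying the mutation relation on the left and right by $Y_j$, using the existing quasi-commutations of $Y_k$ and of each monomial on the right-hand side with $Y_j$, and comparing coefficients yields exactly the entries of $E_k^T \Lambda_{a,\mathcal{S}} E_k$ for each $a$; this is the standard BZ computation carried out in parallel across the parameters. Algebraic independence then follows because the mutation relation is invertible: $Y_k$ is a Laurent polynomial in $(Y_1,\ldots,Y_k',\ldots,Y_n)$, so both tuples generate the same skew-field $\mathcal{F}$.

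Finally, condition (ii) reduces to verifying that $(\widetilde{B}')^T \Lambda'_{a,\mathcal{S}}$ has the prescribed block-diagonal form for each $a$. This is the classical Berenstein-Zelevinsky compatibility identity: the matrix $E_k$ is defined precisely so that mutation of $\widetilde{B}$ together with the congruence $\Lambda \mapsto E_k^T \Lambda E_k$ preserves the compatible-pair form, with only the $k$-th diagonal entry possibly changing sign. I expect the main obstacle to be the bookkeeping for the monomials $u, v$ and the resulting quasi-commutation of $Y_k'$: one must check that $u, v$ are consistent with all $r$ parameters simultaneously and that the induced new quasi-commutation matrices are exactly the $E_k^T \Lambda_{a,\mathcal{S}} E_k$, so that the single-parameter BZ argument genuinely lifts to the toroidal setting.
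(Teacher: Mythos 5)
Your proposal is correct and follows essentially the same route as the paper: both arguments exploit the fact that the mutation rules decouple across the parameters, reduce each condition (quasi-commutation of $Y_k'$ with the other variables, and compatibility of $(\widetilde{B}',\Lambda_{a,\mathcal{S}'})$) to the single-parameter case via the specialization maps $\pi_{\mathcal{T}_\mathcal{S},a}$, and then invoke the Berenstein--Zelevinsky results for quantum cluster algebras parameter by parameter. The extra details you supply (existence of $u,v$ and invertibility of the exchange relation for algebraic independence) are consistent with, and slightly more explicit than, the paper's treatment.
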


\begin{proof} Let $\mathcal{S} = (Y_1,\cdots, Y_n; \widetilde{B})$ and let us keep the notations above 
for the mutated seed $\mathcal{S}' = \mu_k(\mathcal{S})$ for a certain $1\leq k\leq m$. 
From the toroidal mutation formula (\ref{mutrel}), the toroidal cluster variables in $\mathcal{S}'$ 
are algebraically independent. Let us prove they quasi-commute. 
Let $k'\neq k$ and we prove that $Y_k'$ and $Y_{k'}$ quasi-commute. As $Y_k$ and $Y_{k'}$ quasi-commute, 
it suffices to check that $Y_{k'}$ quasi-commute with the two terms
in the sum in (\ref{mutrel}) and that the quasi-commutation parameters are the same for both. 
For each $1\leq a\leq r$, we can apply $\pi_{\mathcal{S},a}$ to the toroidal mutation formula (\ref{mutrel}) and we get a mutation relation for the corresponding 
quantum cluster algebra. By the result on quantum cluster algebras \cite{bz}, the power of each 
quantum parameter $t_a$ is the same for both quasi-commutation relations. Hence we obtain the result. 
To conclude, again from the result on quantum cluster algebras, we get the compatibility in the mutated seed for each $1\leq a\leq r$.
\end{proof}

The quantum torus $\mathcal{T}_\mathcal{S}$ has an antimultiplicative bar-involution defined by $\overline{Y_i} = Y_i$ for $1\leq i\leq n$ and 
$\overline{t_a} = t_a^{-1}$ for $1\leq a\leq r$. It can be extended to the ring of Laurent polynomials in the $Y_i$ and the $t_a^{\frac{1}{2}}$. The commutative
monomials are bar-invariant.

\begin{prop}\label{prop:mutationinvolutive} The mutation of toroidal seeds is involutive.\end{prop}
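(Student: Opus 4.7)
The plan is to show $\mu_k(\mu_k(\mathcal{S})) = \mathcal{S}$ by separately verifying involutivity of the three pieces of the data: the exchange matrix $\widetilde{B}$, the quasi-commutation matrices $\Lambda_{a,\mathcal{S}}$, and the cluster variables $Y_i$. For $i \neq k$ the toroidal mutation rule leaves $Y_i$ untouched, so $\mu_k(\mu_k(Y_i)) = Y_i$ is immediate from the definition. Likewise, the matrix mutation rule (\ref{20190801:eq2}) is the classical Fomin--Zelevinsky mutation, which is well-known to be involutive, so $\mu_k(\mu_k(\widetilde{B})) = \widetilde{B}$.

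For the quasi-commutation matrices, I would argue as in the proof of Proposition \ref{prop:mutationtoroidalseed}: for each fixed $1 \leq a \leq r$, the specialization $\pi_{\mathcal{T}_\mathcal{S},a}$ sends $(\widetilde{B},\Lambda_{a,\mathcal{S}})$ to a compatible pair in the sense of \cite{bz}, and the formula (\ref{20190801:eq1}) matches the Berenstein--Zelevinsky mutation rule for such pairs. Since their mutation is involutive and the formula $\mu_k(\Lambda_{a,\mathcal{S}}) = E_k^T \Lambda_{a,\mathcal{S}} E_k$ depends only on $\Lambda_{a,\mathcal{S}}$ and $\widetilde{B}$ (via the matrix $E_k$ whose entries use $B_{i,k}$, and $E_k$ is unchanged under the second application of $\mu_k$ because $\mu_k(\widetilde{B})_{i,k} = -\widetilde{B}_{i,k}$ yields the opposite sign pattern which still satisfies $E_k'^T E_k = \mathrm{Id}$), we deduce that each $\Lambda_{a,\mathcal{S}}$ is recovered after two mutations.

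The main step is to verify that $\mu_k(\mu_k(Y_k)) = Y_k$. Write $Y_k'$ as defined by (\ref{mutrel}) with coefficients $u,v$, and let $(Y_k')'$ be obtained from $\mathcal{S}'$ by the analogous formula. After mutation the signs of the $b_{i,k}$ flip, so the roles of the two monomial terms swap, and one must check that the corresponding monomial coefficients $u',v'$ in $\mathcal{S}'$ combine with $Y_k'$ to reproduce $Y_k$. The key idea is to reduce to the one-parameter case by the specialization maps: apply $\pi_{\mathcal{T}_\mathcal{S},a}$ to the identity $(Y_k')' = Y_k$ we wish to prove. The image lives in the one-parameter quantum cluster algebra built from $\pi_{\mathcal{T}_\mathcal{S},a}(\mathcal{S})$, where Berenstein--Zelevinsky \cite{bz} establishes involutivity. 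Since the $Y_i$ and $\widetilde{B}$ are unaffected by the specialization and each $t_a$-exponent in the coefficients $u,v,u',v'$ is recovered by the corresponding $\pi_{\mathcal{T}_\mathcal{S},a}$, the equality holds parameter-by-parameter, hence it holds in $\mathcal{T}_\mathcal{S}$.

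The most delicate point is to track the coefficients: $u$ and $v$ are Laurent monomials in $t_1^{1/2}, \ldots, t_r^{1/2}$ determined by the defining relations $u^2 Y_k * \prod_{b_{i,k}>0} Y_i^{b_{i,k}} = \prod_{b_{i,k}>0} Y_i^{b_{i,k}} * Y_k$ and analogously for $v$. I expect the main obstacle is to avoid any sign or exponent mismatch when taking square roots and composing with the mutated coefficients $u',v'$. The cleanest way to handle it is to observe that each of $u,v,u',v'$ is a well-defined $r$-variable monomial in the $t_a^{1/2}$, whose $a$-th exponent is exactly what the one-parameter BZ argument computes for $\pi_{\mathcal{T}_\mathcal{S},a}$; the specialization argument therefore settles all $r$ exponents simultaneously and no additional sign issue arises.
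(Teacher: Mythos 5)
Your proof is correct in outline but takes a genuinely different route from the paper's. The paper observes that applying the antimultiplicative bar-involution to the mutation relation \eqref{mutrel} directly transforms $Y_k'\ast Y_k = uM_+ + vM_-$ into $Y_k\ast Y_k' = u^{-1}M_+ + v^{-1}M_-$, which is precisely the mutation relation going from $\mathcal{S}'$ back to $\mathcal{S}$; this settles the cluster-variable part in one line without any reduction to the quantum case. You instead reduce to the one-parameter quantum cluster algebras of Berenstein--Zelevinsky via the specializations $\pi_{\mathcal{T}_\mathcal{S},a}$, exactly the strategy the paper uses in Proposition~\ref{prop:mutationtoroidalseed}; this is heavier but entirely parallel to the rest of the section. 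The crucial step you handle correctly is that the coefficients $u,v,u',v'$ are Laurent \emph{monomials} in $t_1^{1/2},\dots,t_r^{1/2}$, hence determined by their exponents, which are recovered individually by the specializations; a general Laurent polynomial vanishing under all $\pi_a$ need not vanish, so this observation is load-bearing. Two small imprecisions: (i) $\pi_{\mathcal{T}_\mathcal{S},a}$ is defined on $\mathcal{T}_\mathcal{S}$, not on the skew-field $\mathcal{F}$, so you should first multiply $(Y_k')' = Y_k$ by $Y_k'$ on the right to bring both sides into $\mathcal{T}_\mathcal{S}$ (both $Y_k'$ and $(Y_k')'\ast Y_k'$ lie there by \eqref{mutrel}); (ii) the claim that ``$E_k$ is unchanged under the second application of $\mu_k$'' is not quite right --- $E_k$ changes to $E_k'$ built from $\mu_k(\widetilde{B})$, and the correct statement is $E_k E_k' = \mathrm{Id}$, which gives $\mu_k\big(\mu_k(\Lambda_{a,\mathcal{S}})\big) = (E_kE_k')^T\Lambda_{a,\mathcal{S}}(E_kE_k') = \Lambda_{a,\mathcal{S}}$. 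Neither issue is a genuine gap, and your treatment of the $\Lambda_a$ part, while correct, is also redundant once the $Y_i$ are shown to return to themselves, since the quasi-commutation matrices are determined by the variables.
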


\begin{proof} The fact that the mutation of matrices is involutive is well-known. Then we note that the toroidal cluster mutation
(\ref{mutrel}) is equivalent to 
$$Y_k * Y_k' = u^{-1} \prod_{b_{i,k} > 0} Y_i^{b_{i,k}}+ v^{-1} \prod_{b_{i,k} < 0} Y_i^{-b_{i,k}}.$$
It suffices to apply the antimultiplicative bar-involution discussed above. We get exactly the toroidal mutation relation from the toroidal $\mathcal{S}'$ to $\mathcal{S}$.
\end{proof}

\begin{defi}
Let $\mathcal{S} = (Y_1,\cdots, Y_n;\widetilde{B})$ be a toroidal seed in $\mathcal{F}$. The associated toroidal cluster algebra 
$$ \mathcal{A}_{tor}(\mathcal{S})\subset \mathcal{F}$$ 
is the $\ZZ[t_1^{\pm \frac12},\cdots, t_r^{\pm \frac12}]$-subalgebra of $\mathcal{F}$ generated by all toroidal cluster variables of all toroidal seeds 
obtained by iterated toroidal mutations.
\end{defi}

\subsection{An example}\label{sec:afirstexample}
Let $1 = m\leq 3 = n$, and $ r = 2$. 
Let $\mathcal{T}$ be the algebra over $\ZZ[t_1^{\pm \frac{1}{2}}, t_2^{\pm \frac{1}{2}}]$ with generators 
$X_1,X_2, X_3$ subject to the quasi-commutation relations
$$X_i * X_j = t_1^{\Lambda_1(i,j)}t_2^{\Lambda_2(i,j)} X_j * X_i,$$
where
$$\Lambda_1 = \begin{pmatrix} 0 & 1 & -1 \\ -1 & 0 & 0 \\ 1 & 0 & 0\end{pmatrix}
\,,
\quad
\Lambda_2 = \begin{pmatrix} 0 & 0 & -1 \\ 0 & 0 & 0 \\ 1 & 0 & 0\end{pmatrix}.$$ 
Finally, let
$$ \widetilde{B}^T = \begin{pmatrix} 0 & -1 & 1 \end{pmatrix}.$$
Then the collection $\mathcal{S} = (X_1,X_2,X_3, \widetilde{B}) $ is a toroidal seed. In fact, we have
$$\widetilde{B}^T\Lambda_1 = \begin{pmatrix} 2 & 0 & 0\end{pmatrix}
\,,
\quad
\widetilde{B}^T\Lambda_2 = \begin{pmatrix} 1 & 0 & 0\end{pmatrix},$$
hence $(\widetilde{B},\Lambda_1)$ and $(\widetilde{B},\Lambda_2)$ are compatible. Note that the commutation relations give 
$$t_1^{-1} X_1 \ast X_2 = X_2 \ast X_1 \text{ and } t_1t_2 X_1 \ast X_3 = X_3 \ast X_1.$$ 
We can mutate the toroidal seed $\mathcal{S}$ only in direction $ 1 $ and obtain a new toroidal seed 
$$\mu_1(\mathcal{S}) = \mathcal{S}' = (X_1',X_2,X_3, \widetilde{B}'),$$
where the new toroidal cluster variable $X_1'$ is defined as
\begin{equation}\label{20190503:eq1}
X_1' \ast X_1 = t_1^{-\frac12} X_2 + (t_1t_2)^\frac12 X_3.
\end{equation}
We have $\widetilde{B}' = - \widetilde{B}$ and the quasi-commutation matrices of the new seed are $\Lambda_{a,\mathcal{S}'} =  -\Lambda_a $ ($ a=1,2$), hence the pairs $(\widetilde{B}' , \Lambda_{a,\mathcal{S}'})$ ($ a=1,2$) are clearly compatible.

The toroidal cluster algebra $\mathcal{A}_{tor}(\mathcal{S})$ has two toroidal seeds $\mathcal{S}$ and $\mathcal{S}'$. 
So it is the subalgebra of the fraction field of $\mathcal{T}$ generated by the $4$ toroidal cluster variables : 
$$X_1, X_2, X_3, X_1'.$$ 
Note that the parameters $t_1$, $t_2$ are independent, and that in this case $2$ is the maximal number of independent parameters. 

We have two different quantum cluster algebras corresponding to the specialization at
$t_1 = 1$ or $t_2 = 1$ relatively to the initial seed $\mathcal{S}$, obtained by applying the specialization morphisms $\pi_{\mathcal{T}_\mathcal{S},1}$, $\pi_{\mathcal{T}_\mathcal{S},2}$ to 
$\mathcal{A}_{tor}(\mathcal{S})$. The specialization at $t_2 = 1$ is a certain $\mathbb{Z}$-form of the positive part of the quantum group associated to $sl_3$ : 
$$\pi_{\mathcal{T}_\mathcal{S},2}(\mathcal{A}_{tor}(\mathcal{S}))\simeq \U_{t_1}(\mathfrak{n})_{\mathbb{Z}}\subset \U_{t_1}(sl_3).$$
Without a specialization, we have also the relation : 
$$
X_1 \ast X_1' = t_1^{\frac12} X_2 + (t_1t_2)^{-\frac12} X_3.
$$
Hence $\{X_1, X_1'\}$ generates $ \mathcal{A}_{tor}(\mathcal{S})_{\mathbb{Q}}$, where the latter denotes the same toroidal cluster algebra $ \mathcal{A}_{tor}(\mathcal{S}) $ after extending the scalars from  $ \mathbb{Z}[t_{1}^{\pm \frac 12}, t_{2}^{\pm \frac12}] $ to $ \mathbb{Q}(t_{1}^{\frac 12}, t_{2}^{\frac12})$. These toroidal cluster variables satisfy: 
$$X_1*X_1*X_1' - (t_1^{-2}t_2^{-1}+1)t_1 X_1*X_1'*X_1 + t_2^{-1}X_1' * X_1 * X_1 = 0,$$
$$X_1' * X_1' * X_1 - (t_1^{2}t_2+1)t_1^{-1} X_1'*X_1 * X_1' + t_2 X_1 * X_1' * X_1' = 0.$$
This is a complete presentation of the algebra $\mathcal{A}_{tor}(\mathcal{S})_{\mathbb{Q}}$. Indeed this  is equivalent to the fact that 
$$\{X_1^a X_2^bX_3^c|a,b,c\geq 0\}\cup \{(X_1')^aX_2^bX_3^c| a,b,c\geq 0\}.$$
are linearly free over $\mathbb{Z}[t_1^{\pm \frac12},t_2^{\pm \frac12}]$, which is true at $t_2 = 1$ as it is identified with the 
Lusztig dual canonical basis of $\U_{t_1}(sl_3)$.

We recognize the relations above as quantum Serre relations with $2$ parameters. The quantum Serre relations for the multi-parameter quantum group $ \mathcal{U}_{\mathbf{q}} (sl_3)$ in \cite{hpr} are
$$ e_1^2 e_2 - (q_{11}+1)q_{12} e_1e_2e_1 + q_{11}q_{12}^2 e_2 e_1^2 = 0,$$
$$ e_2^2 e_1 - (q_{22}+1)q_{21} e_2e_1e_2 + q_{22}q_{21}^2 e_1 e_2^2 = 0,$$
for some indeterminates $ \textbf{q} = (q_{11},q_{12},q_{21}, q_{22} ) $ over $\mathbb{Q}$ such that $ q_{12}q_{21} = q_{11}^{-1} = q_{22}^{-1} $. Hence, the two sets of quantum Serre relations coincide if we take $ q_{12} = t_1 $, $ q_{21} = t_1t_2$ and $ q_{11} = q_{22} = t_1^{-2}t_2^{-1}$, and therefore the multi-parameter quantum group $\mathcal{U}_{\mathbf{q}} (sl_3)$ is a two-parameter quantum groups as in \cite{hp} (cfr. \cite[Remark 9 (2)]{hpr}).
More precisely, $\mathcal{A}_{tor}(\mathcal{S})_{\mathbb{Q}}$ is isomorphic to the subalgebra $\mathcal{U}_{(t_1t_2)^{-1}, t_1}(\mathfrak{n})$ of a quantum group $\mathcal{U}_{(t_1t_2)^{-1},t_1}(sl_3)$ with two parameters described in \cite{hp}. We get as a by-product that there exists a $\mathbb{Z}$-form $\mathcal{U}_{(t_1t_2)^{-1}, t_1}(\mathfrak{n})_{\mathbb{Z}} \subset \mathcal{U}_{(t_1t_2)^{-1}, t_1}(\mathfrak{n})$
such that there is an embedding 
$$\mathcal{U}_{(t_1t_2)^{-1}, t_1}(\mathfrak{n})_{\mathbb{Z}}\hookrightarrow \ZZ[t_1^{\pm 1/2}, t_2^{\pm 1/2}, X_1^{\pm 1}, X_2^{\pm 1}, X_3^{\pm 1}].$$

\subsection{Toroidal Laurent phenomenon and positivity}

We have the following toroidal Laurent phenomenon. The generalization of the proof of the commutative Laurent phenomenon 
in \cite{bfz} to the quantum case in \cite[Section 5]{bz} gives also the result in the toroidal case as explained by 
Goodearl-Yakimov in \cite{GYa}.

\begin{thm}\cite[Theorem 2.15]{GYa}\label{thm:laurentphen} Let $\mathcal{S} = (Y_1,\cdots, Y_n;\widetilde{B})$ be a toroidal seed in $\mathcal{F}$. The associated toroidal cluster algebra is 
contained in the $\ZZ[t_1^{\pm \frac{1}{2}},\cdots, t_r^{\pm \frac{1}{2}}]$-subalgebra of $\mathcal{F}$ generated by the $Y_1^{\pm 1}, \cdots, Y_n^{\pm 1}$.
\end{thm}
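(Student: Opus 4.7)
The plan is to adapt the proof of the quantum Laurent phenomenon from \cite[Section 5]{bz} to the multi-parameter framework, which is essentially the strategy carried out by Goodearl-Yakimov in \cite{GYa}.

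First I would introduce the \emph{upper toroidal cluster algebra}
$$\mathcal{U}_{tor}(\mathcal{S}) = \mathcal{T}_\mathcal{S} \cap \bigcap_{1\leq k \leq m} \mathcal{T}_{\mu_k(\mathcal{S})},$$
the intersection being taken inside $\mathcal{F}$. The goal is to establish $\mathcal{A}_{tor}(\mathcal{S}) \subset \mathcal{U}_{tor}(\mathcal{S})$; in particular this yields $\mathcal{A}_{tor}(\mathcal{S}) \subset \mathcal{T}_\mathcal{S}$, which is precisely the Laurent statement.

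Second, the one-step inclusions are automatic from the exchange relation (\ref{mutrel}): $Y_k'$ lies in $\mathcal{T}_\mathcal{S}[Y_k^{-1}]$, and by the involutivity established in Proposition \ref{prop:mutationinvolutive}, $Y_k$ lies in $\mathcal{T}_{\mu_k(\mathcal{S})}[(Y_k')^{-1}]$; the remaining cluster variables are common to the two tori. The technical core, following \cite[Theorem 5.1]{bz}, is then to prove the mutation-invariance of the upper bound: $\mathcal{U}_{tor}(\mathcal{S}) = \mathcal{U}_{tor}(\mu_k(\mathcal{S}))$ for every $1 \leq k \leq m$. Iterating, this forces every toroidal cluster variable obtained from $\mathcal{S}$ by any sequence of mutations to belong to $\mathcal{T}_\mathcal{S}$. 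The \cite{bz} argument amounts to showing that any element of $\mathcal{T}_\mathcal{S}[Y_k^{-1}] \cap \mathcal{T}_{\mu_k(\mathcal{S})}[(Y_k')^{-1}]$ already lies in $\mathcal{T}_\mathcal{S} \cap \mathcal{T}_{\mu_k(\mathcal{S})}$, and this in turn rests on a specific quasi-commutation calculation for the exchange binomial appearing in (\ref{mutrel}).

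The observation that makes the whole mechanism transfer to the toroidal setting is that the compatibility condition $\widetilde{B}^T \Lambda_{a,\mathcal{S}} = (k_a\,\text{Id}_m \mid 0)$ holds for every $a \in \{1,\dots,r\}$ independently, so the multi-parameter quasi-commutation monomial $\prod_a t_a^{\Lambda_a(i,j)}$ factorizes coordinate-wise. The mutation rule (\ref{20190801:eq1}) for the $\Lambda_{a,\mathcal{S}}$ is also coordinate-wise in $a$, so the $r$ compatible pairs $(\widetilde{B}, \Lambda_{a,\mathcal{S}})$ transform identically and the single-parameter telescoping of \cite{bz} can be run in each variable $t_a$ separately and the contributions multiplied. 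Alternatively, one can apply the specialization morphisms $\pi_{\mathcal{T}_\mathcal{S},a}$ (and more generally those coming from Remark \ref{prere}(iv)) to reduce each step to a corresponding identity in a single-parameter quantum cluster algebra where the result is known, and recover the multi-parameter statement by comparing Laurent expansions.

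The main obstacle I foresee is bookkeeping rather than structural: one must keep track of the $r$-tuple of exponents of the $t_a^{1/2}$ in every binomial exchange and verify that the cancellations of \cite{bz} occur simultaneously in each parameter. This is made tractable precisely by the parameter-wise uniformity of the compatibility and mutation data discussed above, so the proof reduces to running the \cite{bz} argument $r$ times in parallel.
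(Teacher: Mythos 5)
Your main route is exactly the one the paper takes: it gives no independent proof but cites \cite[Theorem 2.15]{GYa} and observes that the upper-bound argument of \cite[Section 5]{bz} generalizes verbatim, precisely because the compatibility condition and the mutation rule \eqref{20190801:eq1} hold coordinate-wise in each parameter $a$, so the telescoping runs in parallel for each $t_a$. One caution about your ``alternatively'' via specialization: that route is circular for this particular theorem, since the morphisms $\pi_{\mathcal{T}_\mathcal{S},a}$ are defined on the quantum torus $\mathcal{T}_\mathcal{S}$ and not on all of $\mathcal{F}$, so one cannot specialize a cluster variable (and speak of its Laurent expansion) before Laurentness is established --- the paper accordingly uses specialization only downstream of Theorem \ref{thm:laurentphen}, in the proofs of Theorems \ref{pos} and \ref{thm:exchgraph}.
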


Hence every toroidal cluster variable can be written in a unique way as a linear combination of commutative Laurent monomials $M$ 
in the $Y_1^{\pm 1}, \cdots, Y_n^{\pm 1}$ with coefficients 
$$P_M(t_1,\cdots, t_r)\in \ZZ[t_1^{\pm \frac{1}{2}},\cdots, t_r^{\pm \frac{1}{2}}].$$ 
These are called the coefficients of the Laurent expansion of the cluster variable with respect to the toroidal seed $\mathcal{S}$.

Let us state the positivity for toroidal cluster algebras. We explain it is a consequence of positivity of
quantum cluster algebras proved by Davison \cite{d}.

\begin{thm}\label{pos} The coefficients of the Laurent expansion of every toroidal cluster variable with respect to any toroidal seed in $\mathcal{A}_{tor}(\mathcal{S})$ are positive.
\end{thm}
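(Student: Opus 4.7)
My plan is to reduce the statement to Davison's positivity theorem \cite{d} for quantum cluster algebras via a sufficiently generic specialization from Remark \ref{prere}(iv). Fix a toroidal cluster variable $Y$ and a toroidal seed $\mathcal{S}' = (Y_1', \ldots, Y_n'; \widetilde{B}')$ reached from $\mathcal{S}$ by iterated mutation. By Theorem \ref{thm:laurentphen},
$$Y = \sum_M P_M(t_1^{\pm 1/2}, \ldots, t_r^{\pm 1/2}) \, M,$$
where $M$ ranges over commutative Laurent monomials in the $Y_i'$ and each $P_M$ has finite support $S_M \subset \tfrac12 \ZZ^r$, with only finitely many $P_M$ nonzero. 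The first step is to choose integers $c_1, \ldots, c_r$ so that the linear form $\psi(b_1, \ldots, b_r) = \sum_a b_a c_a$ is injective on the finite union $\bigcup_M S_M$; such a choice exists because it amounts to avoiding finitely many hyperplanes in $\ZZ^r$.

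Next, I would apply the specialization $t_a \mapsto t^{c_a}$ from Remark \ref{prere}(iv), which converts the quasi-commutation data of $\mathcal{S}'$ into a compatible pair with a single quantum parameter $t$, i.e.\ a genuine quantum seed in the sense of \cite{bz}. A preliminary lemma I would verify is that iterated toroidal mutation commutes with this specialization: this is transparent from the mutation rules \eqref{mutrel} and \eqref{20190801:eq1}, since the $t_a$ enter only through Laurent monomials and linear combinations of the quasi-commutation matrices, both operations intertwining with the specialization. Consequently $Y$ specializes to a quantum cluster variable $\widetilde{Y}$ of the resulting quantum cluster algebra, and its Laurent expansion is the term-by-term specialization of the toroidal expansion above.

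Davison's theorem \cite{d} applied to $\widetilde{Y}$ then yields $P_M(t^{c_1}, \ldots, t^{c_r}) \in \ZZ_{\geq 0}[t^{\pm 1/2}]$ for each $M$. By the injectivity of $\psi$ on $S_M$, no cancellation can occur under the specialization: the coefficient of $t^{k/2}$ in the specialized $P_M$ equals the coefficient of $\prod_a t_a^{b_a/2}$ in $P_M$ for the unique $(b_a) \in S_M$ with $\psi(b_a) = k$. Positivity of the specialized coefficients therefore forces positivity of the original coefficients, which is the desired conclusion.

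The main obstacle I expect is the clean verification that iterated toroidal mutation commutes with the specialization of Remark \ref{prere}(iv), together with checking that the specialized seed always lies in the framework where Davison's theorem applies (so that the compatibility constant, the skew-symmetrizability of the principal part of $\widetilde{B}'$, and the quantum-seed axioms are all preserved). Once this compatibility is secured, the remainder of the argument is the standard genericity device transporting one-parameter positivity to the multi-parameter setting.
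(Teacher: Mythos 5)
Your proof is correct and follows essentially the same route as the paper's: both reduce to Davison's positivity theorem by specializing $(t_1,\ldots,t_r)\mapsto(t^{c_1},\ldots,t^{c_r})$ for a sufficiently generic integer vector (the paper chooses $c_i=10^{A\,i}$ explicitly, you invoke a hyperplane-avoidance argument), then use injectivity on the finite support to rule out cancellation. The only cosmetic difference is that the paper runs the argument by contradiction while you state the transfer of positivity directly, and you make explicit the (implicitly used) lemma that toroidal mutation commutes with specialization.
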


\begin{proof}
It follows from \cite{d} that for any $(a_1,\cdots, a_r)\in (\ZZ_{>0})^r$, the
$$P_M(t^{a_1},\cdots, t^{a_r}) \in \ZZ[t^{\pm \frac{1}{2}}]$$
are positive Laurent polynomials in $t^{\frac12}$ (the fact that $P_M(1,\cdots, 1)\geq 0$ is proved in \cite{ls}).
Indeed, the corresponding specialization morphism 
$$(t_1,\cdots,t_r)\mapsto (t^{a_1},\cdots, t^{a_r})$$
defines a quantum seed from $\mathcal{S}$, see (iv) in Remark \ref{prere}.
For any Laurent monomial in $P_M$
$$m = t_1^{b_1}\cdots t_r^{b_r},$$ then there is 
a choice $(a_1,\cdots, a_r)$ so that $m$ is the only monomial contributing to $t^{\sum_i b_ia_i}$ in 
$$P(t) = P_M(t^{a_1},\cdots, t^{a_r}).$$ 
For example, for $A$ the maximum of the absolute values of the powers of the $t_i^{\frac{1}{2}}$ occurring in $P_M$, we may set $a_i = 10^{A \,i }.$
Indeed, the application
$$[-A,A]^r\rightarrow \ZZ \text{ so that } (\mu_1,\cdots, \mu_r)\mapsto \sum_{1\leq i\leq r} \mu_i 10^{A\, i}$$ 
is injective. As $P(t)$ is positive, then $ m $ must have a positive coefficient in $P_{M}$.
\end{proof}

The exchange graph of a cluster algebra is defined as the graph with 
the seeds as vertices, and the edges corresponding to seed mutations.
The toroidal exchange graph of a toroidal cluster algebra is defined in exactly the same way. To a toroidal cluster algebra is naturally associated its classical specialization 
which is the cluster algebra associated to the exchange matrix of one of the 
toroidal seeds.

\begin{thm}\label{thm:exchgraph}
The toroidal exchange graph of a toroidal algebra identifies with 
the exchange graph of its classical specialization.
\end{thm}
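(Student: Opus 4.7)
The plan is to realize the isomorphism as the classical specialization map, and to deduce injectivity from the one-parameter quantum case combined with the density argument already used in the proof of Theorem \ref{pos}.

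First, I would observe that setting $t_1 = \cdots = t_r = 1$ in the toroidal mutation formula (\ref{mutrel}) reduces the Laurent prefactors $u,v$ to $1$ and the $*$-product to the commutative product, so that the toroidal exchange relation specializes exactly to the classical one. Since the exchange matrix mutation rule (\ref{20190801:eq2}) is identical in both settings, classical specialization commutes with mutation and hence defines a graph morphism $\Phi$ from the toroidal exchange graph to the exchange graph of the classical specialization. Surjectivity of $\Phi$ on both vertices and edges is immediate, since any classical mutation sequence from the initial seed lifts to a toroidal mutation sequence with the same sequence of indices.

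The substantive step is injectivity on vertices: two toroidal seeds $\mathcal{S}_1, \mathcal{S}_2$ reached from the initial seed and having identical classical specialization must already be equal. For each tuple $(a_1,\ldots,a_r)\in\ZZ^r$, Remark \ref{prere} (iv) gives a one-parameter specialization $(t_1,\ldots,t_r)\mapsto(t^{a_1},\ldots,t^{a_r})$ turning each $\mathcal{S}_i$ into a quantum seed of parameter $t$ in the sense of \cite{bz}. The resulting two quantum seeds share the same classical limit and are reached from the common initial quantum seed by the same mutation sequences; by the known invariance of the exchange graph for one-parameter quantum cluster algebras (a standard consequence of the quantum Laurent phenomenon together with the positivity result of \cite{d}) they must therefore coincide.

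To pass from equality of all such one-parameter specializations to equality of the toroidal seeds themselves, I would invoke the Zariski-type density argument used in the proof of Theorem \ref{pos}. By Theorem \ref{thm:laurentphen}, the toroidal cluster variables of $\mathcal{S}_1$ and $\mathcal{S}_2$ admit unique Laurent expansions in the initial toroidal cluster with coefficients in $\ZZ[t_1^{\pm 1/2},\ldots,t_r^{\pm 1/2}]$; the equality of all specializations forces the difference of corresponding coefficients to vanish after every substitution $t_a \mapsto t^{a_a}$. Taking $a_i = 10^{Ai}$ with $A$ dominating the exponents occurring, the combined evaluation is injective on Laurent polynomials of bounded support, so the coefficients themselves agree. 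Hence $\mathcal{S}_1 = \mathcal{S}_2$, and $\Phi$ is the desired graph isomorphism. The main obstacle I expect is the clean formulation and invocation of the one-parameter quantum invariance of the exchange graph; once this is in place, the toroidal case reduces to the density argument already at hand.
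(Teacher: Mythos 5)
Your proposal is correct and follows essentially the same route as the paper: construct the natural surjection from the toroidal exchange graph to the classical one, reduce injectivity to equality of Laurent-expansion coefficients, and deduce that equality by specializing $(t_1,\dots,t_r)\mapsto(t^{a_1},\dots,t^{a_r})$ to one-parameter quantum seeds and applying the same density (injectivity) argument used in the proof of Theorem \ref{pos}. The only cosmetic difference is that the paper attributes the needed one-parameter statement directly to \cite{bz}, whereas you derive it from the quantum Laurent phenomenon plus positivity; the structure of the argument is otherwise identical.
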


The proof relies on the result established in \cite{bz} for quantum cluster algebras.

\begin{proof}
There is a natural surjective map $\Psi$ from the toroidal exchange graph
to the exchange graph of the specialization. To prove it is a bijection, is suffices
to prove that two cluster variables $\chi$, $\chi'$ whose images in the classical specialization are 
equal coincide. We use the same strategy as in the proof of Theorem \ref{pos} above, that is we 
specialize to various quantum cluster algebras. We fix an initial seed and we consider the
Laurent developments of the various cluster variables : for a Laurent monomial $M$ of the 
cluster variables of the initial seed, we denote by $P_M(t_1,\cdots, t_r)$ and $P_M'(t_1,\cdots, t_r)$ the coefficient of $M$ in the respective developments. As above, 
it follows from the result \cite{bz} in the quantum case that for any $(a_1,\cdots, a_r)\in(\mathbb{Z}_{>0})^r$, 
$$P_M(t^{a_1},\cdots, t^{a_r}) = P_M'(t^{a_1},\cdots, t^{a_r}).$$
By an analog argument as in the proof of Theorem \ref{pos}, this implies that $P_M = P_M'$.
\end{proof}

As in the classical case, we say that a toroidal cluster algebra is of finite type if is has a finite number of toroidal cluster variables. 
Fomin and Zelevinsky \cite{fz2} gave a complete classification of cluster algebras of finite type, which turns out to mirror the Cartan-Killing 
classification of simple Lie algebras and finite root systems.
We say that two quivers associated to different seeds in a (toroidal) cluster algebra are mutation equivalent if one can be obtained from the 
other by performing a sequence of mutations of quivers.
Then, a cluster algebra is of (finite) type $X_n$ if its underlying quiver is mutation equivalent to an orientation of the Dynkin diagram of type $X_n$ 
 (the vertices corresponding to frozen variables are disregarded, that is we only consider the principal part of the quiver). 
Clearly, a cluster algebra has a finite number of cluster variables if and only if its exchange graph is finite. Thus, as a direct consequence of Theorem 
\ref{thm:exchgraph} we have the following : 
\begin{cor}
A toroidal cluster algebra $\mathcal{A}_{tor}(\mathcal{S})$ is of finite type $X_n$, if the principal part of the quiver associated to the initial seed $\mathcal{S}$ 
is mutation equivalent to an orientation of the Dynkin diagram of type $ X_n$.
\end{cor}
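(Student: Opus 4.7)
The plan is to deduce this corollary almost directly from Theorem \ref{thm:exchgraph} combined with the Fomin--Zelevinsky classification of cluster algebras of finite type \cite{fz2}. First I would pass to the classical specialization of $\mathcal{A}_{tor}(\mathcal{S})$, that is the commutative cluster algebra obtained by setting all the formal parameters $t_a = 1$. By construction, the exchange matrix of the initial seed, and hence its associated quiver (and principal part), are preserved under this specialization. Therefore the hypothesis that the principal part of the quiver of $\mathcal{S}$ is mutation-equivalent to an orientation of the Dynkin diagram of type $X_n$ transfers verbatim to the classical specialization.

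Next I would invoke the Fomin--Zelevinsky theorem \cite{fz2}: a commutative cluster algebra whose principal part quiver is mutation-equivalent to an orientation of a Dynkin diagram of type $X_n$ is of finite type $X_n$, and in particular has only finitely many cluster variables and a finite exchange graph. Theorem \ref{thm:exchgraph} then identifies the toroidal exchange graph of $\mathcal{A}_{tor}(\mathcal{S})$ with the exchange graph of its classical specialization. Under this identification, the number of toroidal seeds is equal to the number of classical seeds, which is finite.

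Finally, since each toroidal seed contains exactly $n$ toroidal cluster variables and there are only finitely many seeds, the total collection of toroidal cluster variables is finite. Thus $\mathcal{A}_{tor}(\mathcal{S})$ is of finite type in the sense defined just above the corollary. The type $X_n$ label is inherited directly from the mutation equivalence class of the principal part of the initial quiver, which is a mutation invariant.

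There is essentially no obstacle here beyond assembling the inputs; the substantive content lies entirely in Theorem \ref{thm:exchgraph}, whose proof (via the specialization trick used earlier to establish positivity) has already been carried out. The only mild point worth being careful about is that ``finite type $X_n$'' refers to the principal part only---that is, frozen variables are disregarded---which matches the convention explicitly stated in the paragraph preceding the corollary, so no further argument is required on that front.
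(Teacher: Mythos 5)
Your proposal is correct and follows exactly the route the paper intends: the corollary is stated in the paper as a direct consequence of Theorem \ref{thm:exchgraph} together with the Fomin--Zelevinsky finite-type classification, with no separate proof given, and your write-up simply fills in those routine details.
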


\section{Multi-parameter quantum tori}\label{deux}

We consider multi-parameter quantum tori which appear naturally in the study of the representation theory 
of quantum affine algebras and of certain formal power series with coefficients in Heisenberg algebras studied in \cite{H1}. 
These multi-parameter quantum tori will allow us to construct toroidal Grothendieck rings and examples of 
toroidal cluster algebras in the next Sections.

\subsection{Quantized Cartan matrix}
Let $\Glie$ be a simply-laced untwisted affine Kac--Moody
Lie algebra with underlying finite-dimensional simple Lie algebra $\overline{\Glie}$ of rank $n$. Set $I=\{1,\ldots, n\}$ and 
$C = (C_{i,j})_{i,j\in I}$ the Cartan matrix of $\overline{\Glie}$. Let 
$$\{\alpha_i\}_{i\in I}\text{ , }\{\alpha_i^\vee\}_{i\in I}\text{ , }\{\omega_i\}_{i\in I}\text{ , }\{\omega_i^\vee\}_{i\in I},$$ 
and $\overline{\mathfrak{h}}$ be the simple
roots, the simple coroots, the fundamental weights, the fundamental
coweights, and the Cartan subalgebra of $\overline{\Glie}$, respectively.  
We denote by $ \Delta$ be the root system of $ \mathfrak{g}$, and by $ \Delta_+ \subset \Delta $ the subset of positive roots.
We use the numbering of the Dynkin diagram as in \cite{kac}.  

Let $z$ be an indeterminate, and let $C(z)$ be the $n\times n$-matrix with entries
$$C_{i,j}(z) = [C_{i,j}]_z$$
Here for an integer $m$, $[m]_z = \frac{z^m - z^{-m}}{z - z^{-1}} = \sum_{h=0}^{m-1} z^{m-2h-1}$ is the standard quantum number.

Thus $C(1)$ is just the Cartan matrix $C$ of $\overline{\Glie}$.
Since $\det(C) \not = 0$, $\det(C(z))\not = 0$.  We denote by $\widetilde{C}(z)$ the inverse of the matrix $C(z)$.
This is a matrix with entries $\widetilde{C}_{ij}(z)\in\QQ(z)$. Explicit formulas can be found in \cite[Appendix A]{gtl}.
The entries of $\widetilde{C}(z)$ have power series expansions in $z$ of the form (see \cite{HL:qGro}) :
\begin{equation}\label{20190726:eq1}
\widetilde{C}_{i,j}(z) = \sum_{m \ge 1} \widetilde{C}_{ij}(m)\, z^{m}\text{, where $\widetilde{C}_{i,j}(m)\in\ZZ$.}
\end{equation}

We have the following periodicity property established in \cite{HL:qGro}, for $i,j\in I$ and $m \ge 1$ :
$$\widetilde{C}_{i,j}(m+2h) = \widetilde{C}_{i,j}(m),$$
where $h $ ($ = h^\vee $) is the (dual) Coxeter number of $\overline{\Glie}$.
Moreover, by \cite[Prop 2.1]{HL:qGro}, the following properties hold :
\begin{equation}\label{20190801:eq4}
\begin{array}{l}
\displaystyle{
\widetilde{C}_{i,j}(1) = \delta_{ij}
\,}\\
\displaystyle{
\widetilde{C}_{i,j}(m+1) +  \widetilde{C}_{i,j}(m-1) - \sum_{k \sim i} \widetilde{C}_{k,j}(m) = 0\,, \quad m \geq 1
\,.}
\end{array}
\end{equation}

\begin{rem}
When $ \overline{\mathfrak{g}}$ of Dynkin type $A_n$, we can give a very explicit description of the entries of the inverse of the quantum Cartan matrix $\widetilde{C}(z)$, which is easily derived by the formulas in \cite[Appendix A]{gtl}: For $1 \leq i \leq j \leq n$ we have
\begin{equation}\label{20190221:eq2}
\begin{array}{l}
\displaystyle{
\vphantom{\Big(}
\widetilde{C}_{i,j}(z)
=
\Big(
\sum_{a=0}^{i-1} z^{i+j-1-2a} - \sum_{a=-n+j-1}^{-n+i+j-2} z^{i+j-1-2a}
\Big)
\sum_{b\geq 0} z^{2(n+1)b}
\,,}
\end{array}
\end{equation}
while for $ i > j $ we use the fact that the inverse of the quantum Cartan matrix is symmetric: $ \widetilde{C}_{i,j}(z) = \widetilde{C}_{j,i}(z)$.
\end{rem}

\subsection{Heisenberg Lie-algebra and Frenkel-Reshetikhin currents}
Let $ q $ be a non-zero complex number which is not a root of unity.
Following \cite{H1}, we consider the Heisenberg algebra $\mathscr{H} $ as the $\CC$-algebra with generators $ a_i[m]$ ($i \in I, m \in \mathbb{Z} \setminus \{0\}$),
the central elements $ c_r $ ($r>0$), and relations ($i,j\in I, m, r \in \mathbb{Z}\setminus \{0\}$):
$$
[a_i[m], a_j[r]] = \delta_{m,-r}(q^m-q^{-m}) C_{i,j}(q^m)c_{|m|}.$$

For $ j \in I$, $ m \in \mathbb{Z}$, let moreover $ y_j[m] = \sum_{i \in I} \widetilde{C}_{i,j}(q^m)a_i[m] \in \mathscr{H}$.
We have
\begin{equation}\label{20190320:eq1}
\begin{array}{l}
\displaystyle{
\vphantom{\Big(}
[a_i[m], y_j[r]] = (q^{m } - q^{-m })\delta_{m,-r}\delta_{ij}c_{|m|}
,}\\
\displaystyle{
\vphantom{\Big(}
[y_i[m], y_j[r]] = \delta_{m,-r} \widetilde{C}_{j,i}(q^m)(q^{m} - q^{-m })c_{|m|}
.}
\end{array}
\end{equation}

Next, consider the $\CC$-algebra $ \mathscr{H}_h := \mathscr{H}[[h]]$. Following \cite{Fre, H1}, we consider certain invertible elements in $\mathscr{H}_h$ 
for $ (i, r) \in \hat{I}:= I \times \mathbb{Z} $  :
$$A_{i,r} = \text{exp} \Big( \sum_{m > 0} h^m a_i[m]q^{r m}\Big)  \text{exp} \Big(\sum_{m > 0} h^m a_{i}[-m]q^{-r m}\Big),$$
$$Y_{i,r} = \text{exp} \Big( \sum_{m > 0} h^m y_i[m]q^{r m}\Big)  \text{exp} \Big(\sum_{m > 0} h^m y_{i}[-m]q^{-r m}\Big).$$
Let $\mathfrak{U} \subset \mathbb{Q}(z)$ be the group of rational fractions of the form $\frac{P(z)}{Q(z)}$ where $ P(z) \in \frac{1}{2}\mathbb{Z}[z^{\pm 1}]$, $Q(z)\in \mathbb{Z}[z]$, $Q(0) = 1$ 
and the zeros of $Q(z)$ are roots of unity. By expanding $ Q(z)^{-1} $ in $ \mathbb{Z}[[z]] $ we have an embedding $ \mathfrak{U} \subset \frac{1}{2}\mathbb{Z}((z))$. For $R \in \mathfrak{U} $ we denote
$$ t_R = \text{exp} \Big(\sum_{m > 0} h^{2m} R(q^m)c_m\Big)\in  \mathscr{H}_h.$$

We denote by $ \widetilde{\mathscr{Y}}_\infty$ the $\mathbb{Z}$-subalgebra of $\mathscr{H}_h$ generated by the elements
$Y_{i,r}^{\pm 1}$, $A_{i,r}^{\pm 1}$, $t_R $
($(i,r)\in \hat{I}$, $R\in \mathfrak{U}$).
We denote by $*$ the product in this algebra. By \eqref{20190320:eq1}, the following quasi-commutation relations hold in $\widetilde{\mathscr{Y}}_\infty$ :
\begin{equation}\label{20190320:eq2}
\begin{array}{l}
\displaystyle{
\vphantom{\Big(}
A_{i,p}* Y_{j,s}* A_{i,p}^{-1}* Y_{j,s}^{-1} = t_{\delta_{ij}(z-z^{-1})(z^{(p-s)}-z^{(s-p)})}
,}\\
\displaystyle{
\vphantom{\Big(}
Y_{i,p} * Y_{j,s} * Y_{i,p}^{-1} * Y_{j,s}^{-1} = t_{\widetilde{C}_{j,i}(z)(z - z^{- 1})(z^{(p-s)}-z^{(s-p)})}
.}
\end{array}
\end{equation}

\subsection{Multi-parameter quantum tori}\label{sec:quantumtori}
Let $ \mathscr{Y} = \mathbb{Z}[Y_{i,r}^{\pm 1}  \mid (i, r) \in \hat{I}] $,
be the Laurent polynomial ring generated by a collection of commutative variables $Y_{i,r}$.
In  \cite{H1} the second author constructed, starting from $\widetilde{\mathscr{Y}}_\infty$, a one-parameter deformation of the ring $ \mathscr{Y}$. We consider here deformations with an arbitrary number of parameters.

Given $R\in \mathfrak{U}$ we can rewrite (cfr. \cite[Lemma 3.7]{H1}) 
$$ t_R = t_{\Big(\sum_{m \geq - M_R} R_m z^m\Big)} = \prod_{m\geq -M_R} \big(t_{z^m}\big)^{R_m}.$$
 For any $ r \in \mathbb{Z} $ we have the coefficient map $\pi_r: \mathfrak{U} \longrightarrow \frac{1}{2}\mathbb{Z}$, so that
$$ P = \sum_{r \geq - R } \pi_r(P) z^r\text{ for any $P\in\mathfrak{U}$.}$$
By definition, $ \pi_r(\widetilde{C}_{i,j}(z)) = \widetilde{C}_{i,j}(r)$.

Let us use the shorthand $ t_m$ for $ t_{z^m}$, $ m \in \mathbb{Z}$.
As the $Y_{i,r}$ are algebraically independent, 
 $\widetilde{\mathscr{Y}}_\infty$ can be presented (\cite[Lemma 3.9]{H1}) as
the $\mathbb{Z}[t_R \mid R \in \mathfrak{U}]$-algebra with the generators $Y_{i, r}^{\pm 1} $, $(i,r) \in \hat{I}$, and quasi-commutation relations
\begin{equation}\label{20190409:eq1}
Y_{i, p}\ast  Y_{j, s} = \left( \prod_{a\in \mathbb{Z}} {t_a}^{\mathcal{N}_a(i, p; j, s)}\right) Y_{j, s} \ast Y_{i, p},
\end{equation}
where the map $ \mathcal{N}_a: \hat{I} \times \hat{I} \longrightarrow \mathbb{Z}$ is given by
\footnote{This is obtained from the quasi-commutation relation in \eqref{20190320:eq2}, but with a sign change, so that it agrees with the formula in \cite{HL:qGro} corresponding to the case $ a = 0$.}
$$\mathcal{N}_a(i,p; j,s) =  \pi_a\Big( \widetilde{C}_{j,i}(z) (z - z^{- 1})(z^{s-p}-z^{p-s})\Big).$$
This is a quantum torus of infinite rank.

We can compute the value of $\mathcal{N}_a(i,p; j,s)$ explicitly and obtain
\begin{equation}\label{20190801:eq3}
\begin{array}{l}
\displaystyle{
\vphantom{\Big(}
\mathcal{N}_a(i,p; j,s)
=
\sum_{r \in \mathbb{Z}} \widetilde{C}_{j,i}(r)\big(\delta_{s-p+r+ 1 ,a} -\delta_{p-s+r+ 1,a} -\delta_{s-p+r- 1,a}+ \delta_{p-s+r-1,a}\big)
\,}\\
\displaystyle{
\vphantom{\Big(}
=
\widetilde{C}_{j,i}(p-s- 1+a) -\widetilde{C}_{j,i}(s-p- 1+a) -\widetilde{C}_{j,i}(p-s+ 1+a) + \widetilde{C}_{j,i}(s-p+1+a) 
.}
\end{array}
\end{equation}

Note that the map $ \mathcal{N}_a: \hat{I} \times \hat{I} \longrightarrow \mathbb{Z}$
is clearly skew-symmetric, namely 
$$\mathcal{N}_a(i,p; j,s) = - \mathcal{N}_a(j,s; i,p).$$
Moreover, it only depends on the difference $ s-p$. Thus, $ \mathcal{N}_a(i,p; j,s) = \mathcal{N}_a(i,0; j,s-p)$ and $ \mathcal{N}_a(i,p; j,p) = 0$ for any $ p \in \mathbb{Z}$, hence the variables $ Y_{i, p} $ and $ Y_{j, p} $ commute for any $ i,j \in I$.

\begin{rem}\label{20190807:rem1}
\begin{enumerate}
\item
It follows from \eqref{20190801:eq3} that $\mathcal{N}_{-a}(i,0; j,a) = \delta_{i,j} $ ($a >0$), and $\mathcal{N}_a(i,p; j,s) = 0 $ for $ a < - \mid s-p \mid$. 
Hence, for any pair $ Y_{i,p}$, $ Y_{j,s}$, in the RHS of \eqref{20190409:eq1} there is 
a minimum $a$ so that the parameter $t_a$ occurs, while such a maximum $a$ does not exist in general.
\item
We want to remark that even though in the RHS of Equation \eqref{20190409:eq1} we allow an infinite product of $ t_{R}$'s, in practice we will always consider only a finite number of parameters. In fact, thanks to \eqref{20190801:eq3} and the periodicity condition $\widetilde{C}_{i,j}(m+2h) = \widetilde{C}_{i,j}(m)$ for $ m \geq 1$, we obtain that $ \mathcal{N}_{a+2h}(i,0;j,s) = \mathcal{N}_{a}(i,0;j,s) $ for $ a \geq s+2$.
\end{enumerate}
\end{rem}

The following constructions are analogous to the corresponding ones for the one-parameter deformation.

Given a family of integers $u_{i,p}$, $(i,p) \in \hat{I}$, with finitely many nonzero components we have the commutative monomial
$$
\prod_{(i,p) \in \hat{I}} Y_{i,p}^{u_{i,p}}
=
\prod_{a \in \mathbb{Z}}
{t_a}^{\frac12\sum_{(i,p)<(j,s)} u_{i,p}u_{j,s}\mathcal{N}_a(j,s;i,p)} \overset{\longrightarrow}{*}_{(i,p) \in \hat{I}} Y_{i,p}^{u_{i,p}}
\,,
$$
where the arrow means that the product is ordered according to a certain ordering of $ \hat{I}$, arbitrarily chosen.
It follows that given $ m_1 = \prod_{(i,p)\in\hat{I}} Y_{i,p}^{u_{i,p}(m_1)} $ and $m_2 = \prod_{(j,s)\in\hat{I}} Y_{j,s}^{u_{j,s}(m_2)}$ commutative monomials,
their non-commutative $\ast$-product in $\widetilde{\mathscr{Y}}_{\infty}$ is
\begin{equation}\label{20190429:eq1}
m_1 \ast m_2 = \prod_{a \in \mathbb{Z}}
{t_a}^{\frac12 D_a(m_1,m_2)}m_1m_2
\,,
\end{equation}
$$
\text{where }D_a(m_1,m_2) = \sum_{(i,p),(j,s)\in\hat{I}} u_{i,p}(m_1) u_{j,s}(m_2) \mathcal{N}_a(i,p;j,s)
\,.
$$
We say that a commutative $m = \prod_{(i,p)\in\hat{I}} Y_{i,p}^{u_{i,p}(m)} $
is dominant if $ u_{i,p}(m) \geq 0 $ for all $ (i,p) \in \hat{I} $. 

For $(i,r) \in \hat{I}$, let us denote by the same symbol $A_{i,r}$ the commutative monomial associated to 
the formal power series $A_{i,r}$ defined above : 
\begin{equation}\label{20190624:eq1}
A_{i,r} = Y_{i,{r-1}} Y_{i,{r+1}}
\Big(\!\prod_{j | C_{ji}=-1} Y_{j,r}^{-1}\Big).
\end{equation}

\begin{rem}\label{20190726:rem1}
The one-parameter deformation of the Laurent ring $ \mathscr{Y}$ described in \cite{H1} is obtained as a particular case, when we consider the quotient $\mathscr{Y}_t$ of $\widetilde{\mathscr{Y}}_\infty$
 obtained by the relations $t_a = 1$ if $a\neq 0$. We set $t = t_0$ and by \cite[Theorem 3.11]{H1}, $ \mathscr{Y}_t $ is the $ \mathbb{Z}[t^{\pm 1}]$-algebra with generators the variables $ Y_{i, r}^{\pm 1} $, $(i,r) \in \hat{I}$, and commutation relations
\begin{equation}\label{20190430:eq2}
Y_{i, p} \ast Y_{j, s} = t^{\mathcal{N}(i, p; j, s)} Y_{j, s} \ast Y_{i, p},
\end{equation}
where $ \mathcal{N}(i,p;j,s)$ coincides
(up to a sign\footnote{Here, we are using the same product as in \cite{HL:qGro}, which differs from the one in \cite{H1} by replacing $ t $ with $ t^{-1}$. This change amounts to a sign change in the RHS of the formulas for $ \mathcal{N}_a(i,p; j,s)$.})
with the exponent $ \mathcal{N}_0(i,p;j,s) $ defined above.
\end{rem}

\subsection{Finite rank multi-parameter quantum tori}

Let us give several examples of finite rank multi-parameter quantum tori which will appear in the following 
when we will study monoidal subcategories of finite-dimensional representations of quantum affine algebras.

 Let $\mathcal{Q} $ be an orientation of the Dynkin diagram of the Lie algebra $ \overline{\mathfrak{g}}$.
A height function $ \xi: I \longrightarrow \mathbb{Z} $ on $\mathcal{Q}$ is a function satisfying
$ \xi_j = \xi_i -1 $
whenever there is an arrow $ i \rightarrow j $ between the nodes $ i, j \in \mathcal{Q}$.
As $ \mathcal{Q} $ is connected, we can fix a height function $ \xi $ and any two height functions would differ by a constant.

{\bf Example 1} : we suppose the orientation is bipartite, that is every vertex of $\mathcal{Q}$ is a sink or a source. We assume $ \xi_i = 1 $ for any $ i \in I $ source.
Then $\widetilde{\mathscr{Y}}_{\infty,1}$ is the $\mathbb{Z}[t_R \mid R \in \mathfrak{U}]$-subalgebra of $\widetilde{\mathscr{Y}}_\infty$  generated by the variables $Y_{i,p}^{\pm 1}$,
where $i\in I$, $p = \xi_i , \xi_i + 2$. This is a quantum torus of rank $ 2n$. $\widetilde{\mathscr{Y}}_{\infty,1}$ 
is the corresponding extended torus.

{\bf Example 2} : we suppose $\overline{\mathfrak{g}}$ be of type $A_n$ and the orientation of $ \mathcal{Q} $ is linear with $\xi_i = i$.
Then $ \widetilde{\mathscr{Y}}_{\infty,ob} $ is the $\mathbb{Z}[t_R \mid R \in \mathfrak{U}]$-subalgebra of $\widetilde{\mathscr{Y}}_\infty$ 
generated by the variables $Y_{i,p}^{\pm 1} $,
where $ 1 \leq i \leq n $ and $ p = i-1, i+1$. This is a quantum torus of rank $ 2n$. 
$ \widetilde{\mathscr{Y}}_{\infty, ob}$ is the corresponding extended torus.

{\bf Example 3} : for an arbitrary orientation of $\mathcal{Q}$, we set
$$ \hat{I}_\xi := \{ (i,p) \in I \times \mathbb{Z} \mid p - \xi_i \in 2\mathbb{Z}\}, $$
and $ \hat{\Delta} := \Delta \times \mathbb{Z}$, which gives a labeling for the infinite repetition quiver
$ \widehat{\mathcal{Q}} $ attached to $ \mathcal{Q}$.
There is a bijection $\varphi: \hat{I}_\xi \rightarrow \hat{\Delta}$ (see \cite{HL:qGro} for details).
Define
$$ \hat{I}_{\mathcal{Q}} := \varphi^{-1}(\Delta_+ \times \{0\}) \subset \hat{I}_\xi,$$
and let $ \widetilde{\mathscr{Y}}_{\infty,\mathcal{Q}} $ be the $\mathbb{Z}[t_R \mid R \in \mathfrak{U}]$-subalgebra of $\widetilde{\mathscr{Y}}_\infty$
generated by the $Y_{i,p}^{\pm 1} $ for $(i,p)\in \hat{I}_{\mathcal{Q}}$.
This is a quantum torus of rank $ |\Delta_+ |$. $ \widetilde{\mathscr{Y}}_{\infty,\mathcal{Q}}$ is the corresponding extended torus.

\section{Finite-dimensional representations of quantum affine algebras}\label{trois}

We give a brief review on finite-dimensional representations of quantum affine algebras (the reader may refer to \cite{Cha2, CH} and references therein for more details).

Recall that $ q $ is a non-zero complex number which is not a root of unity. Let $U_q(\mathfrak{g})$ be the quantum affine algebra associated with the affine Kac-Moody algebra $\Glie$, which is a $q$-deformation of the universal enveloping algebra of $ \mathfrak{g}$. Let $ \mathscr{C} $ be the category of finite-dimensional $ U_q(\mathfrak{g})$-modules of type $ 1 $, namely the category of modules whose eigenvalues for the elements $k_i$ ($i \in I $) are of the form $ q^m$ for some $ m \in \mathbb{Z}$. Since $U_q(\mathfrak{g})$ is a Hopf algebra the category $ \mathscr{C} $ has a tensor structure. However it is not semisimple and not braided.

Chari and Pressley \cite{Cha2} proved that the simple objects $ L $ of $ \mathscr{C}$ are parametrized by $I$-tuples of polynomials in one indeterminate,
with coefficients in $ \mathbb{C}$ and constant term $ 1 $, the so-called Drinfeld polynomials $ P_L = (P_{i,L}(u)\,, i \in I )$.  
Some distinguished objects of $\mathscr{C}$ are the \textit{fundamental modules} $ V_{i}(a)$
($i \in I $, $ a \in \mathbb{C}^\ast$) whose Drinfeld polynomials are of the form
$$ P_{j,V_{i}(a)}(u) = \begin{cases} 1-au, &\text{if } i = j \\ 1, &\text{otherwise}.\end{cases}$$

Another family of distinguished (simple) objects of $ \mathscr{C} $ is given by the
\textit{Kirillov-Reshetikhin modules} $ W_{k,a}^{(i)}$
($i \in I $, $k \in \mathbb{N}^\ast$ and $ a \in \mathbb{C}^\ast$)
whose Drinfeld polynomials are of the form
$$ P_{j,W^{(i)}_{k,a}}(u) = \begin{cases} (1-au)(1-aq^2u)\cdots (1-aq^{2k-2}u), &\text{if } i = j \\ 1, &\text{otherwise}.\end{cases}$$
Clearly, $ W^{(i)}_{1,a}$ coincides with the fundamental module $ V_{i}(a) $
while $ W^{(i)}_{0,a} $ is by convention the trivial representation, for every $ i $ and every $ a $.

The affine analogue of the usual weights for $\overline{\mathfrak{g}}$-modules are called $ \ell$-weights,
and every simple $U_q(\mathfrak{g})$-module $ L $ is uniquely characterized by a highest $\ell$-weight $ \gamma$ similarly to what happens for simple $\overline{\mathfrak{g}}$-modules. 

Given an element $ V \in \text{Ob}(\mathscr{C})$, it can be decomposed as direct sum of its  $\ell$-weight spaces, namely the affine analogues of the weight spaces.
Frenkel and Reshetikhin \cite{Fre}, have attached to $V$ a certain element of $\mathbb{Z}[Y_{i,a}^{\pm 1} \mid i \in I, a \in \mathbb{C}^\ast]$ with positive coefficients, which we call its $q$-character $ \chi_q(V)$.
This is the generating series of the $\ell$-weight spaces of $ V $.
If $V$ is a simple module, it is determined up to isomorphism by its $q$-character.

To any $ \ell$-weight $\gamma $ of a $U_q(\mathfrak{g})$-module $ V $ we can attach a certain Laurent monomial $ m_\gamma \in \mathbb{Z}[Y_{i,a}^{\pm 1} \mid i \in I, a \in \mathbb{C}^\ast]$.
In particular, for any simple $U_q(\mathfrak{g})$-module $ L$ with highest $\ell$-weight $\gamma$,
the corresponding monomial $ m_\gamma $ is \textit{dominant}, i.e. it does only contain positive powers of the variables $ Y_{i,a}$. In this case we denote $ L = L(m_{\gamma})$.
Dominant monomials give an equivalent classification of the simple objects in $ \mathscr{C}$, up to isomorphism.

Let $\mathscr{K}(\mathscr{C})$ denote the Grothendieck ring of $\mathscr{C}$.
Although $\mathscr{C}$ is not braided, its Grothendieck ring  $\mathscr{K}(\mathscr{C})$ is commutative.
It is known \cite[Cor. $2$]{Fre} that the classes of the fundamental modules are algebraically independent 
and that $\mathscr{K}(\mathscr{C})$ is isomorphic to the polynomial ring in these classes.

Let us focus on certain monoidal subcategories.

Following \cite{hl}, let $\mathscr{C}_{\mathbb{Z}}$ the subcategory of the category of finite-dimensional $ U_q(\mathfrak{g})$-modules (of type 1) 
whose simple constituents have highest monomial in $\mathscr{Y}_\xi : = \mathbb{Z}[Y_{i,q^{r}}^{\pm 1} \mid (i,r) \in \hat{I}_\xi]$, 
where $ \xi$ is a height function. Then $\mathscr{C}_{\mathbb{Z}}$ is a tensor subcategory of
$\mathscr{C}$ and its Grothendieck ring is the subring of $\mathscr{K}(\mathscr{C})$ generated by the classes of the fundamental modules
of the form $V_i(q^{r})$, $(i,r) \in \hat{I}_\xi$. The $q$-character of an object in $\mathscr{C}_{\mathbb{Z}}$ is 
a Laurent polynomial in $\mathscr{Y} = \mathbb{Z}[Y_{i,q^r}^{\pm 1} \mid (i,r) \in \hat{I}_\xi]$. %
In particular, the fundamental module $V_i(q^r) $ is associated to the dominant monomial $ Y_{i,q^r}$, and 
the KR-module $W^{(i)}_{k,q^r}$ to the dominant monomial $  m^{(i)}_{k,r}:= Y_{i,q^r}Y_{i,q^{r+2}}\cdots Y_{i,q^{r + 2k-2}}$.
For  $i\in I$ and $r\in\mathbb{Z}$ we use a simplification of notation : $Y_{i,r} = Y_{i,q^r}$, and $W^{(i)}_{k,r} := W^{(i)}_{k,q^r} $.

This category has interesting 
monoidal subcategories corresponding to the sub-tori discussed in the last section.

{\bf Example 1} Let $ \mathscr{C}_1$ be the full subcategory of $\mathscr{C}_{\mathbb{Z}}$ of objects whose simple constituents are indexed by dominant commutative monomials in $\widetilde{\mathscr{Y}}_{\infty,1}$. 

{\bf Example 2} In type $A$, let $\mathscr{C}_1^{ob}$ be the full subcategory of $\mathscr{C}_{\mathbb{Z}}$ of objects whose simple constituents are indexed by dominant commutative monomials in $\widetilde{\mathscr{Y}}_{\infty,ob}$. 

{\bf Example 3} Let $\mathscr{C}_{\mathcal{Q}}$ be the full subcategory of $\mathscr{C}_{\mathbb{Z}}$ of objects whose simple 
constituents are indexed by dominant commutative monomials in $\widetilde{\mathscr{Y}}_{\infty,\mathcal{Q}}$.

These categories $\mathscr{C}_1$, $\mathscr{C}_1^{ob}$, $\mathscr{C}_{\mathcal{Q}}$ are monoidal (see \cite{hl, hlad, HL:qGro}, respectively). 
Note that the choice of an arbitrary sub-torus does not lead necessarily to a monoidal category, see comments in the proof of \cite[Lemma 5.8]{HL:qGro}.

\section{Toroidal Grothendieck rings}\label{quatre}

Let $\mathscr{C}' \subset \mathscr{C}$ be one of the subcategories considered above, that is $\mathscr{C}_{\mathbb{Z}}$, $\mathscr{C}_1$, 
$\mathscr{C}_1^{ob}$ or $\mathscr{C}_{\mathcal{Q}}$.
We introduce multi-parameter deformations of the Grothendieck ring $ \mathscr{K}(\mathscr{C}')$ of the category $\mathscr{C}'$  (see Definition \ref{quotientorus}).

One parameter quantum deformations of the quantum Grothendieck ring appeared in the work of Nakajima \cite{Nak:quiver} and Varagnolo-Vasserot  \cite{VV:qGro} in type ADE, with a geometric construction based on categories of perverse sheaves on quiver varieties. An alternative algebraic construction was given by the second author \cite{H1} for all types. We will follow this approach, but in addition to the technical points addressed in \cite{H1}, we have to overcome new difficulties related to the flatness of the deformation. We introduce a specific quotient of a multi-parameter quantum torus (Definition \ref{quotientorus}) in which we construct the
toroidal Grothendieck rings (Definition \ref{torgr}). The flatness is proved in Theorem \ref{mainflat}. 

We also define classes of fundamental representations 
which provide a generating family of the toroidal Grothendieck ring (Proposition \ref{fundgen}).

\subsection{Quantum Grothendieck rings}\label{sec:qgr}

The constructions of quantum Grothendieck rings $\mathscr{K}_t(\mathscr{C})$ with one parameter are based on $t$-deformations of Frenkel and Reshetikhin $q$-character, the $ (q,t)$-characters. 
They belong to a $t$-deformed version $\mathscr{Y}_t $ of the quantum torus $\mathscr{Y}$ (cfr. Remark \ref{20190726:rem1}). 
Let us recall the following main properties (see \cite{HL:qGro} for a complete review): 

(1) $\mathscr{K}_t(\mathscr{C})$ is defined as the intersection of subrings $\mathscr{K}_{i,t}$, $i\in I$, of $\mathscr{Y}_t$. 
The definition of these subrings mimics what should be the definition in the $sl_2$-case for each node $i$. This is a 
reminiscence of the Weyl group invariance
of usual characters.

(2) $\mathscr{K}_t(\mathscr{C})$ has a $\mathbb{Z}[t^{\pm \frac{1}{2}}]$-basis of elements denoted by $F_t(m)$. Each $F_t(m)$ has a unique dominant monomial $m$ and its multiplicity is $1$. In particular, each non-zero element
in $\mathscr{K}_t(\mathscr{C})$ has at least a dominant monomial and is characterized by the multiplicity of its dominant monomials.
The $F_t(m)$ are obtained by an explicit algorithm and are deformations of analogs $F(m)$ which form a basis of $\mathscr{K}(\mathscr{C})$. 
We get a flat deformation of the classical Grothendieck ring $\mathscr{K}(\mathscr{C})$.
By a flat deformation of a commutative algebra $\overline{A}$ we mean an algebra $ A$ that is a free module over a Laurent polynomial ring $ \mathbb{Z}[t_\lambda^{\pm \frac12} \mid \lambda \in \Lambda] $ such that  $A / \sum_{\lambda \in \Lambda} (t_\lambda^{\frac12} -1)A $ is isomorphic to $\overline{A}$.
The quantum Grothendieck ring $\mathscr{K}_t(\mathscr{C})$ satisfies this property since the ordered products of classes of fundamental modules provide a basis both of the classical and of the quantum Grothendieck ring.

(3) The $(q,t)$-character of the fundamental module $V_i(a)$ is $F_t(Y_{i,a})$. The $(q,t)$-characters of fundamental modules 
generate $\mathscr{K}_t(\mathscr{C})$ as a $\mathbb{Z}[t^{\pm \frac{1}{2}}]$-algebra.

(4) Each category $\mathscr{C}'$ has a quantum Grothendieck ring $\mathscr{K}_t(\mathscr{C}')$ generated by the $(q,t)$-characters of 
the fundamental modules in this category.

(5) For the subcategories $\mathscr{C}_1$, $\mathscr{C}_1^{ob}$, $\mathscr{C}_Q$, the $ (q,t)$-character of fundamental modules may contain monomials which do not belong to the underlying quantum subtorus. This brings to the introduction of \textit{truncated} $(q,t)$-characters, obtained by discarding all such monomials; we denote it by $ \widetilde{\chi_{q,t}(L)}$ (the specialization at $t = 1$ is the truncated $q$-character $ \widetilde{\chi_q(L)}$). They generate a different subring of the quantum torus $\mathscr{Y}_t$ isomorphic to $\mathscr{K}_t(\mathscr{C}')$ (see for instance \cite[Prop 6.1]{hl}, \cite[Prop 3.10]{hljems}). The properties (2) above are also satisfied as for a simple object $ L $ in $\mathscr{C}'$, all dominant monomials occurring in $ \chi_{q,t}(L)$ also occur in $ \widetilde{\chi_{q,t}(L)}$.

\subsection{Naive construction in the toroidal case} We first highlight the issues concerning the flatness of deformations. 
For simplicity, let us consider the case when all the fundamental representations in $\mathscr{C}' $ are \textit{thin}, namely their $q$-characters are multiplicity free (we discuss the general case later). Note that all fundamental representations are thin for types $A$, $B$, $C$ and $G_2$ 
(see \cite{H2}).

In fact, when fundamental representations are thin, we know (see for instance \cite{HL:qGro})
that their $(q,t)$-characters coincide with the usual $q$-characters. We follow this approach  to define what we call the $(q,\infty)$-character of any fundamental module $ V_{i}(a) \in \text{Ob}(\mathscr{C}')$ :
$$
[V_{i}(a)]_{q,\infty} := \chi_{q,t}(V_{i}(a)) = \chi_q(V_{i}(a)) = F(Y_{i,a}),
$$
where we identify commutative monomials, inside the quantum torus $\widetilde{\mathscr{Y}}_{\infty}$.

As a first naive definition, we can define the (generalized) toroidal Grothendieck ring of the category $\mathscr{C}'$ as the subring of the ring of Laurent polynomials $\widetilde{\mathscr{Y}}_{\infty}$ generated by the $(q,\infty)$-characters of the fundamental modules. We denote this 
by $ \widetilde{\mathscr{K}}_{\infty}(\mathscr{C}') $.

The issue with this definition of toroidal Grothendieck ring is that it might be too big, and therefore fail to be a \textit{flat deformation} of the Grothendieck ring $ \mathscr{K}(\mathscr{C}')$ (see (2) in Section \ref{sec:qgr}).
Namely, since we know that the ordered products of classes of fundamental modules provide a basis of the Grothendieck ring $\mathscr{K}(\mathscr{C}')$, we would like that (ordered) product of the same classes, when considered in the quantum torus $\widetilde{\mathscr{Y}}_{\infty}$, to provide a basis of $\widetilde{\mathscr{K}}_{\infty}(\mathscr{C}')$.  We illustrate with an example what happens when we try to do so.

\subsection{An example}\label{ex:multidefquanumgroth}
Let $ \overline{\mathfrak{g}} = sl_3$ and $ \mathscr{C'} = \mathscr{C}_{\mathcal{Q}}$.
The category $ \mathscr{C}_{\mathcal{Q}}$ has a basis given by the fundamental modules
$ V_1(1)$, $V_1(q^2) $ and $ V_2(q)$
(with height function $ \xi_1 = 0$, $ \xi_2 = 1 $).  
By definition, the $(q,\infty)$-characters of these fundamental representations are
\begin{equation}
\begin{array}{l}
\displaystyle{
\vphantom{\Big(}
[V_1(1)]_{q,\infty} = \chi_{q,t}(V_{1}(1)) = Y_{1,0} + Y_{1,2}^{-1}Y_{2,1} + Y_{2,3}^{-1}
\,}\\
\displaystyle{
\vphantom{\Big(}
[V_1(q^2)]_{q,\infty} = \chi_{q,t}(V_{1}(q^2)) = Y_{1,2} + Y_{1,4}^{-1}Y_{2,3} + Y_{2,5}^{-1}
\,}\\
\displaystyle{
\vphantom{\Big(}
[V_2(q)]_{q,\infty} = \chi_{q,t}(V_{2}(q)) = Y_{2,1} + Y_{1,2}Y^{-1}_{2,3} + Y_{1,4}^{-1}
\,.}
\end{array}
\end{equation}

The quantum Cartan matrix for $sl_3 $ is
$$
C(z)
= 
\begin{pmatrix}
z + z^{-1} & -1 \\ -1 & z + z^{-1} 
\end{pmatrix},
$$
and we can compute its inverse $ \widetilde{C}(z) = ( \widetilde{C}_{ij}(z) )_{ij\in I}$,
whose entries have the form \eqref{20190726:eq1}.
By \cite[Cor. $2.3$]{HL:qGro} we have
$ \widetilde{C}_{ij}(m) = \widetilde{C}_{ij}(m+6)$ for $ i,j \in I $ and $ m \geq 1$,
and in particular
\begin{equation}
\begin{array}{l}
\displaystyle{
\vphantom{\Big(}
\widetilde{C}_{11}(z) = \widetilde{C}_{22}(z)= z - z^5 + z^7 - z^{11} + \ldots
\,}\\
\displaystyle{
\vphantom{\Big(}
\widetilde{C}_{12}(z) = \widetilde{C}_{21}(z)= z^2 - z^4 + z^8 - z^{10} + \ldots
\,.}
\end{array}
\end{equation}

According to \eqref{20190409:eq1}, we can compute
\begin{equation}\label{20181130:eq1b}
\begin{array}{l}
\displaystyle{
\vphantom{\Big(}
[V_1(1)]_{q,\infty}
\ast
[V_1(q^2)]_{q,\infty}
=
\prod_{a \in \ZZ} t_a^{\frac{\mathcal{N}_a(1,0,1,2)}{2}}
\Big(
Y_{1,0}Y_{1,2}
+
Y^{-1}_{2,3}Y^{-1}_{2,5}
+
Y^{-1}_{1,2}Y^{-1}_{1,4}Y_{2,1}Y_{2,3}
\,}\\
\displaystyle{
\vphantom{\Big(}
+
t_{-4}^{-\frac12}t_{-2}^{\frac12}t_2^{\frac12}t_4^{-\frac12}
\big(
Y_{1,0}Y^{-1}_{1,4}Y_{2,3}
+
Y_{1,0}Y^{-1}_{2,5}
+
Y^{-1}_{1,2}Y_{2,1}Y^{-1}_{2,5}
\big)
+
t_{-2}^{-\frac12}t_0t_2^{-\frac12}
[V_2(q)]_{q,\infty}
\Big)
\,,}
\end{array}
\end{equation}
\begin{equation}\label{20181130:eq7b}
\begin{array}{l}
\displaystyle{
\vphantom{\Big(}
[V_1(1)]_{q,\infty}
\ast
[V_2(q)]_{q,\infty}
=
\prod_{a \in \ZZ} t_a^{\frac{\mathcal{N}_a(1,0,2,1)}{2}}
\Big(
Y_{1,0}Y_{2,1}
+
Y_{1,0}Y_{1,2}Y^{-1}_{2,3}
+
Y^{-1}_{1,2}Y^2_{2,1}
+
Y_{1,2}Y^{-2}_{2,3}
\,}\\
\displaystyle{
\vphantom{\Big(}
+
Y^{-1}_{1,2}Y^{-1}_{1,4}Y_{2,1}
+
Y^{-1}_{1,4}Y^{-1}_{2,3}
+
t_{-4}^{-\frac12}t_{-2}^{\frac12}t_2^{\frac12}t_4^{-\frac12}
Y_{1,1}Y^{-1}_{1,q^4}
+
\big(
t_{-2}^{-\frac12}t_0t_2^{-\frac12}
+
t_{-2}^{\frac12}t_0^{-1}t_2^{\frac12}
\big)
Y_{2,1}Y^{-1}_{2,3}
\Big)
\,,}
\end{array}
\end{equation}
\begin{equation}\label{20181130:eq14b}
\begin{array}{l}
\displaystyle{
\vphantom{\Big(}
[V_1(q^2)]_{q,\infty}
\ast
[V_2(q)]_{q,\infty}
=
\prod_{a \in \ZZ} t_a^{\frac{-\mathcal{N}_a(1,0,2,1)}{2}}
\Big(
Y_{1,2}Y_{2,1}
+
Y^2_{1,2}Y^{-1}_{2,3}
+
Y^{-1}_{1,4}Y_{2,1}Y_{2,3}
+
Y^{-2}_{1,4}Y_{2,3}
\,}\\
\displaystyle{
\vphantom{\Big(}
+
Y_{1,2}Y^{-1}_{2,3}Y^{-1}_{2,5}
+
Y^{-1}_{1,4}Y^{-1}_{2,5}
+
t_{-4}^{\frac12}t_{-2}^{-\frac12}t_2^{-\frac12}t_4^{\frac12}
Y_{2,1}Y^{-1}_{2,5}
+
\big(
t_{-2}^{-\frac12}t_0t_2^{-\frac12}
+
t_{-2}^{\frac12}t_0^{-1}t_2^{\frac12}
\big)
Y_{1,2}Y^{-1}_{1,4}
\Big)
\,,}
\end{array}
\end{equation}
$$
\begin{array}{l}
\displaystyle{
\vphantom{\Big(}
\prod_{a \in \ZZ} t_a^{\mathcal{N}_a(1,0,1,2)}
=
t_{-2}t_0^{-1}t_2^{-2}t_4^3\prod_{k\geq 1} t_{6k+2}^{-3} t_{6k+4}^3
\,,}\\
\displaystyle{
\vphantom{\Big(}
\prod_{a \in \ZZ} t_a^{\mathcal{N}_a(1,0,2,1)}
=
t_0t_2^{-3}t_4^3\prod_{k\geq 1} t_{6k+2}^{-3} t_{6k+4}^3
\,.}
\end{array}
$$

Combining the corresponding products in the opposite order,
we obtain
\begin{equation}\label{20181130:eq3b}
\begin{array}{l}
\displaystyle{
\vphantom{\Big(}
[V_1(1)]_{q,\infty}
\ast
[V_1(q^2)]_{q,\infty}
-
\prod_{a \in \ZZ} t_a^{\mathcal{N}_a(1,0,1,2)}
[V_1(q^2)]_{q,\infty}
\ast 
[V_1(1)]_{q,\infty}
\,}\\
\displaystyle{
\vphantom{\Big(}
=
\big(
1
-
t_{-4}t_{-2}^{-1}t_2^{-1}t_4
\big)
\big(
Y_{1,0}
\ast 
Y^{-1}_{1,4}Y_{2,3}
+
Y_{1,0}
\ast 
Y^{-1}_{2,5}
+
Y^{-1}_{1,2}Y_{2,1}
\ast 
Y^{-1}_{2,5}
\big)
\,}\\
\displaystyle{
\vphantom{\Big(}
+
\big(
1
-
t_{-2}t_0^{-2}t_2
\big)
\prod_{a \in \ZZ} t_a^{\frac{\mathcal{N}_a(1,0,2,1)}{2}}
[V_2(q)]_{q,\infty}
\,,}
\end{array}
\end{equation}
\begin{equation}\label{20181130:eq9b}
\begin{array}{l}
\displaystyle{
\vphantom{\Big(}
[V_1(1)]_{q,\infty}
\ast
[V_2(q)]_{q,\infty}
-
\prod_{a \in \ZZ} t_a^{\mathcal{N}_a(1,0,2,1)}
[V_2(q)]_{q,\infty}
\ast
[V_1(1)]_{q,\infty}
\,}\\
\displaystyle{
\vphantom{\Big(}
=
\big(
1
-
t_{-4}t_{-2}^{-1}t_2^{-1}t_4
\big)
Y_{1,0}
\ast
Y^{-1}_{1,4}
\,,}
\end{array}
\end{equation}
\begin{equation}\label{20181130:eq16b}
\begin{array}{l}
\displaystyle{
\vphantom{\Big(}
[V_1(q^2)]_{q,\infty}
\ast
[V_2(q)]_{q,\infty}
-
\prod_{a \in \ZZ} t_a^{-\mathcal{N}_a(1,0,2,1)}
[V_2(q)]_{q,\infty}
\ast
[V_1(q^2)]_{q,\infty}
\,}\\
\displaystyle{
\vphantom{\Big(}
=
\big(
1
-
t_{-4}^{-1}t_{-2}t_2t_4^{-1}
\big)
Y^{-1}_{2,5}
\ast
Y_{2,1}
\,.}
\end{array}
\end{equation}
The RHS of equations \eqref{20181130:eq3b}, \eqref{20181130:eq9b} and \eqref{20181130:eq16b},
should provide elements of $ \widetilde{\mathscr{K}}_{\infty}(\mathscr{C}_{\mathcal{Q}})$.
However, no dominant monomial occur in these elements (except for $ [V_2(q)]_{q,\infty}$ in the RHS of \eqref{20181130:eq3b}), 
as we should expect if we had a basis of ordered product of fundamental classes (cfr. Section \ref{sec:qgr}, property ($2$)).
Therefore, we need to impose some additional relations between the parameters $ t_a^{\pm \frac12}$
in order for $\widetilde{\mathscr{K}}_{\infty}(\mathscr{C}_{\mathcal{Q}})$ to be a flat deformation of $\mathscr{K}(\mathscr{C}_{\mathcal{Q}})$.

If we quotient the quantum torus $ \widetilde{\mathscr{Y}}_\infty$ by the relation
$$ 1 = t_{-4}^{\frac12}t_{-2}^{-\frac12}t_2^{-\frac12}t_4^{\frac12}$$
and we take $\mathscr{K}_{\infty}(\mathscr{C}_{\mathcal{Q}}) $ to be the subring of this quotient quantum torus 
$ \mathscr{Y}_{\infty}$ generated by the (images of the) classes of the fundamental modules
$ V_1(1)$, $V_1(q^2)$ and $ V_2(q)$, it descends from the relations above that the result is a genuine flat deformation of $ \mathscr{K}(\mathscr{C}_\mathcal{Q})$.
As a consequence, we can also uniquely define the following classes of simple modules:
\begin{equation}\label{20181211:eq1}
\begin{array}{l}
\displaystyle{
\vphantom{\Big(}
[L(Y_{1,0}Y_{1,2})]_{q,\infty}
:=
\chi_{q,t}(L(Y_{1,0}Y_{1,2}))
\,,}\\[10pt]
\displaystyle{
\vphantom{\Big(}
[L(Y_{1,0}Y_{2,1})]_{q,\infty}
:=
Y_{1,0}Y_{2,1}
+
Y_{1,0}Y_{1,2}Y^{-1}_{2,3}
+
Y^{-1}_{1,2}Y^2_{2,1}
+
Y^{-1}_{1,2}Y^{-1}_{1,4}Y_{2,1}
\,}\\
\displaystyle{
\vphantom{\Big(}
\phantom{A}
+
Y_{1,2}Y^{-2}_{2,3}
+
Y^{-1}_{1,4}Y^{-1}_{2,3}
+
Y_{1,0}Y^{-1}_{1,4}
+
\big(
t_{-2}^{-\frac12}t_0t_2^{-\frac12}
+
t_{-2}^{\frac12}t_0^{-1}t_2^{\frac12}
\big)
Y_{2,1}Y^{-1}_{2,3}
\,,}\\[10pt]
\displaystyle{
\vphantom{\Big(}
[L(Y_{1,2}Y_{2,1})]_{q,\infty}
:=
Y_{1,2}Y_{2,1}
+
Y^2_{1,2}Y^{-1}_{2,3}
+
Y^{-1}_{1,4}Y_{2,1}Y_{2,3}
+
Y^{-2}_{1,4}Y_{2,3}
\,}\\
\displaystyle{
\vphantom{\Big(}
\phantom{A}
+
Y_{1,2}Y^{-1}_{2,3}Y^{-1}_{2,5}
+
Y^{-1}_{1,4}Y^{-1}_{2,5}
+
Y_{2,1}Y^{-1}_{2,5}
+
\big(
t_{-2}^{-\frac12}t_0t_2^{-\frac12}
+
t_{-2}^{\frac12}t_0^{-1}t_2^{\frac12}
\big)
Y_{1,2}Y^{-1}_{1,4}
\,.}
\end{array}
\end{equation}

\subsection{Idea of the general construction}

In general, our strategy is to define a new quantum torus $ \mathscr{Y}_{\infty} $
as the quotient of $ \widetilde{\mathscr{Y}}_{\infty}$ by all the relations 
which appear as coefficients leading to elements in the toroidal Grothendieck ring 
without dominant monomials.

Let us now give the precise construction.

\subsection{Toroidal Grothendieck ring for the category $\mathscr{C}_{\mathbb Z}$}
We follow the idea of one parameter quantum Grothendieck rings explained at the beginning of this section. We introduce for each $i\in I$ a subring mimicking the construction in the $sl_2$-case and then the toroidal Grothendieck ring will be defined as the intersection of these subrings.

For $ i \in I $, let  $\widetilde{\mathscr{K}}_{i,\infty}  $ 
be the $\mathbb{Z}[t_R \mid R \in \mathfrak{U}]$-subalgebra of $\widetilde{\mathscr{Y}}_{\infty} $
generated by the elements
\begin{equation}\label{20190919:eq1}
\begin{cases}
Y_{i,r} + Y_{i,r}A^{-1}_{i,r+1},&\\
Y_{j,r},\quad &j \neq i,
\end{cases}
\end{equation}
where for $ i \in I$, $ r \in \mathbb{Z}$.
Note that $Y_{i,r}A^{-1}_{i,r+1} = Y_{i,r+2}^{-1}\prod_{j \sim i} Y_{j,r+1}$.

Let $i\in I$. A monomial $m = \prod_{(j,r)  \in \hat{I}} Y_{j,r}^{u_{j,r}(m)}$ is said to be $i$-dominant if the powers $u_{j,r}(m)$ of the $Y_{j,r}$ are all positive for $j = i$. Then one can define 
$$
E_{i,\infty}(m)
:=
\overset{\rightarrow}{*}_{r \in \mathbb{Z}}
\Big( \big(Y_{i,r} + Y_{i,r}A^{-1}_{i,r+1}\big)^{u_{i,r}(m)} *_{j\neq i} Y_{j,r}^{u_{j,r}(m)}\Big) \in \widetilde{\mathscr{K}}_{i,\infty},
$$
where the arrow above the product sign means that the product is ordered increasingly in the index
$ r $ (i.e. $\overset{\rightarrow}{\prod}_{r \in \mathbb{Z}} U_r = \ldots U_{-1} U_0 U_1 U_2 \ldots$).

Analogously to the one-parameter case in \cite{H1}, we would like to prove that for every $ i \in I $ the elements $E_{i,\infty}(m) $ provide a $\mathbb{Z}[t_R  \mid R \in \mathfrak{U}]$-basis of $\widetilde{\mathscr{K}}_{i,\infty}$. This family is linearly free. So, it would be sufficient to show that every non-ordered product in 
$\widetilde{\mathscr{K}}_{i,\infty}$ can be written as a linear combination of
the elements $E_{i,\infty}(m) $.

By \eqref{20190430:eq2} the variables $ Y_{j,r} $ for a fixed $ r \in \mathbb{Z}$ mutually commute,
and moreover it is possible to show (analogously to the one-parameter case in \cite{H1})
that for any $ j \neq i $ and fixed $ r \in \mathbb{Z}$ the two generators
$ Y_{i,r} + Y_{i,r}A^{-1}_{i,r+1} $ and $ Y_{j,r} $ commute.
Therefore it suffices to consider the product of two generators as $ Y_{i,r} + Y_{i,r}A^{-1}_{i,r+1}$ and $Y_{i,r'} + Y_{i,r'}A^{-1}_{i,r'+1}$ ($r \neq r'$) in both orders. The following Proposition thus shows how the $E_{i,\infty}(m) $ fail to generate the whole $\widetilde{\mathscr{K}}_{i,\infty}$.

\begin{prop}\label{prop:qgroth2}
Let $ k \geq 1 $, and consider the family $ \alpha_s(k) \in \frac12 \mathbb{Z}$ defined by
$$ Y_{i,r} \ast Y_{i,r+2k} = \Big(\prod_{s \in \mathbb{Z}} t_s^{\alpha_s(k)}\Big) Y_{i,r+2k} \ast Y_{i,r}.$$
Then in the quantum torus $ \widetilde{\mathscr{Y}}_{\infty}$ the following holds
\begin{equation}\label{20190430:eq1}
\begin{array}{l}
\displaystyle{
\vphantom{\Big(}
\Big(Y_{i,r} + Y_{i,r}A^{-1}_{i,r+1}\Big)
\ast
\Big(Y_{i,r+2k} + Y_{i,r+2k}A^{-1}_{i,r+2k+1}\Big)
}\\
\displaystyle{
\vphantom{\Big(}
-
\Big(\prod_{s \in \mathbb{Z}} t_s^{\alpha_s(k)}\Big)
\Big(Y_{i,r+2k} + Y_{i,r+2k}A^{-1}_{i,r+2k+1}\Big)
\ast
\Big(Y_{i,r} + Y_{i,r}A^{-1}_{i,r+1}\Big)
}\\
\displaystyle{
\vphantom{\Big(}
=
\Big(1 - t_{-2k-2}t_{-2k}^{-1}t_{2k}^{-1}t_{2k+2}\Big)
\Big(\prod_{s \in \mathbb{Z}} t_s^{\beta_s(k)}\Big)
Y_{i,r}Y_{i,r+2k}A_{i,r+2k+1}^{-1}
}\\
\displaystyle{
\vphantom{\Big(}
+
\Big(1 - t_{-2k}t_{-2k+2}^{-1}t_{2k-2}^{-1}t_{2k}\Big)
\Big(\prod_{s \in \mathbb{Z}} t_s^{\gamma_s(k)}\Big)
Y_{i,r}A_{i,r+1}^{-1}Y_{i,r+2k}
\,,
}
\end{array}
\end{equation}
for some $ \beta_s(k), \gamma_s(k) \in \frac12 \mathbb{Z}$.
\end{prop}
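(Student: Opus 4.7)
The plan is to expand the left-hand side of \eqref{20190430:eq1} by distributivity and analyze the four resulting pairs. Set
$$
B_1 := Y_{i,r},\ B_2 := Y_{i,r} A^{-1}_{i,r+1},\ B_3 := Y_{i,r+2k},\ B_4 := Y_{i,r+2k} A^{-1}_{i,r+2k+1},
$$
and $c := \prod_{s \in \ZZ} t_s^{\alpha_s(k)}$. For commutative monomials $X, Y$ in $\widetilde{\mathscr{Y}}_\infty$, let $f(X,Y)$ denote the unique Laurent monomial in the $t_a$ such that $X \ast Y = f(X,Y)\, Y \ast X$; by \eqref{20190429:eq1} and the skew-symmetry of the $\mathcal{N}_a$, $f$ is bimultiplicative in $X$ and $Y$. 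The left-hand side then rewrites as $\sum_{i \in \{1,2\},\, j \in \{3,4\}} \bigl(1 - c/f(B_i, B_j)\bigr)\, B_i \ast B_j$.

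The first task is to show that the two ``diagonal'' pairs $(B_1, B_3)$ and $(B_2, B_4)$ contribute zero. The identity $f(B_1, B_3) = c$ holds by the very definition of $\alpha_s(k)$. For the second, bimultiplicativity yields
$$
f(B_2, B_4) = c \cdot f(Y_{i,r}, A^{-1}_{i,r+2k+1})\, f(A^{-1}_{i,r+1}, Y_{i,r+2k})\, f(A^{-1}_{i,r+1}, A^{-1}_{i,r+2k+1}).
$$
The first two residual factors come from \eqref{20190320:eq2} (read with the sign convention built into \eqref{20190801:eq3}), each consisting of four $t_a^{\pm 1}$. The third comes from the Heisenberg relations \eqref{20190320:eq1}, which via the standard BCH computation gives $f(A_{i,p}, A_{i,p'}) = \prod_a t_a^{\pi_a((z^2 - z^{-2})(z^{p'-p} - z^{p-p'}))}$. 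A direct expansion shows that the resulting twelve $t_a^{\pm 1}$ pair off and cancel, establishing $f(B_2, B_4) = c$.

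For the two off-diagonal pairs, bimultiplicativity and the same formula for $f(A_{i,p}, Y_{i,s})$ yield
$$
c/f(B_1, B_4) = f(Y_{i,r}, A^{-1}_{i,r+2k+1})^{-1} = t_{-2k-2}\, t_{-2k}^{-1}\, t_{2k}^{-1}\, t_{2k+2},
$$
$$
c/f(B_2, B_3) = f(A^{-1}_{i,r+1}, Y_{i,r+2k})^{-1} = t_{-2k}\, t_{-2k+2}^{-1}\, t_{2k-2}^{-1}\, t_{2k},
$$
matching precisely the two factors on the right-hand side of \eqref{20190430:eq1}. Writing $B_1 \ast B_4$ and $B_2 \ast B_3$ in commutative-monomial form via \eqref{20190429:eq1} produces the commutative monomials $Y_{i,r}Y_{i,r+2k}A^{-1}_{i,r+2k+1}$ and $Y_{i,r}A^{-1}_{i,r+1}Y_{i,r+2k}$ respectively, together with the prefactors $\prod_s t_s^{\beta_s(k)}$ and $\prod_s t_s^{\gamma_s(k)}$, where $\beta_s(k), \gamma_s(k) \in \tfrac12 \ZZ$ are exactly half of the corresponding $D_s$-exponents from \eqref{20190429:eq1}.

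The main obstacle is the identity $f(B_2, B_4) = c$, i.e.\ the cancellation of twelve independent $t_a$-factors drawn from three distinct commutation computations. Conceptually it says that the ``lowest-weight'' translate $B_2$ of $B_1$ quasi-commutes with the translate $B_4$ of $B_3$ by exactly the same multi-parameter factor as $B_1$ with $B_3$; this is the multi-parameter avatar of a closure property familiar in the one-parameter quantum Grothendieck ring (where it is automatic because the corresponding single-parameter factors specialize to $1$), but in the toroidal setting it requires the explicit bookkeeping described above.
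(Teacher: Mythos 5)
Your proof is correct and takes essentially the same route as the paper's: expand the product over the four pairs, check that the two ``diagonal'' pairs quasi-commute by exactly the factor $c$ and hence contribute nothing, and identify the residual factors of the two off-diagonal pairs with the stated combinations of four $t_a^{\pm1}$. The only difference is bookkeeping: you evaluate the residual exponents by factoring through the $A$--$Y$ and $A$--$A$ quasi-commutation relations (made explicit in the paper only in Section \ref{seven}), whereas the paper substitutes the explicit formula \eqref{20190801:eq3} for $\mathcal{N}_a$ in terms of $\widetilde{C}$ and cancels term by term; the two computations coincide.
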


\begin{proof}
For $ s \in \mathbb{Z} $, let $ \beta_s(k)$, $ \gamma_s(k)$, $ \delta_s(k) \in \frac12 \mathbb{Z}$
such that
$$
\begin{array}{l}
\displaystyle{
\vphantom{\Big(}
Y_{i,r} \ast Y_{i,r+2k}A^{-1}_{i,r+2k+1}
=
\Big(\prod_{s \in \mathbb{Z}} t_s^{2\beta_s(k)}\Big)
Y_{i,r+2k}A^{-1}_{i,r+2k+1} \ast Y_{i,r}
\,,}\\
\displaystyle{
\vphantom{\Big(}
Y_{i,r} A^{-1}_{i,r+1} \ast Y_{i,r+2k}
=
\Big(\prod_{s \in \mathbb{Z}} t_s^{2\gamma_s(k)}\Big)
Y_{i,r+2k} \ast Y_{i,r}A^{-1}_{i,r+1}
\,,}\\
\displaystyle{
\vphantom{\Big(}
Y_{i,r}A^{-1}_{i,r+1}\ast Y_{i,r+2k}A^{-1}_{i,r+2k+1}
=
\Big(\prod_{s \in \mathbb{Z}} t_s^{2\delta_s(k)}\Big)
Y_{i,r+2k}A^{-1}_{i,r+2k+1} \ast Y_{i,r}A^{-1}_{i,r+1}
\,.}
\end{array}
$$
By substituting these commutation relations, the LHS of \eqref{20190430:eq1}
equals
\begin{equation}\label{20190624:eq3}
\begin{array}{c}
\displaystyle{
\vphantom{\Big(}
\Big(
1
-
\prod_{s \in \mathbb{Z}} t_s^{\alpha_s(k)-2\beta_s(k)}
\Big)
Y_{i,r} \ast Y_{i,r+2k}A^{-1}_{i,r+2k+1}
+
\Big(
1
-
\prod_{s \in \mathbb{Z}} t_s^{\alpha_s(k)-2\gamma_s(k)}
\Big)
Y_{i,r}A^{-1}_{i,r+1} \ast Y_{i,r+2k}
}\\
\displaystyle{
\vphantom{\Big(}
+
\Big(
1
-
\prod_{s \in \mathbb{Z}} t_s^{\alpha_s(k)-2\delta_s(k)}
\Big)
Y_{i,r}A^{-1}_{i,r+1} \ast Y_{i,r+2k}A^{-1}_{i,r+2k+1}
\,.
}
\end{array}
\end{equation}

By \eqref{20190429:eq1}, and by the properties of the inverse of the quantum Cartan matrix
we can compute explicitly the exponents $ \alpha_s(k), \beta_s(k), \gamma_s(k), \delta_s(k) $ ($ s \in \mathbb{Z}$) and obtain:
\[
\begin{array}{l}
\displaystyle{
\alpha_s(k) \,= \mathcal{N}_s(i,0;i,2k)
= \widetilde{C}_{ii}(-2k-1+s) - \widetilde{C}_{ii}(2k-1+s) 
- \widetilde{C}_{ii}(-2k+1+s) + \widetilde{C}_{ii}(2k+1+s)
\,,}\\
\displaystyle{
\vphantom{\Big(}
2\beta_s(k) = - \mathcal{N}_s(i,0;i,2k+2) + \sum_{j \sim i} \mathcal{N}_s(i,0;j,2k+1)
=  - \delta_{s,-2k-2} + \delta_{s,-2k} + \delta_{s,2k} - \delta_{s,2k+2}
\,}\\
\displaystyle{
\quad + \widetilde{C}_{ii}(-2k-1+s) - \widetilde{C}_{ii}(-2k+1+s) - \widetilde{C}_{ii}(2k-1+s) + \widetilde{C}_{ii}(2k+1+s)
\,,}\\
\displaystyle{
\vphantom{\Big(}
2\gamma_s(k) = - \mathcal{N}_s(i,0;i,2k-2) + \sum_{j\sim i}\mathcal{N}_s(i,0;j,2k-1)
=
-\delta_{s,-2k} + \delta_{s,-2k+2} + \delta_{s,2k-2} -\delta_{s,2k}
\,}\\
\displaystyle{
\quad
+\widetilde{C}_{ii}(-2k-1+s) - \widetilde{C}_{ii}(-2k+1+s) - \widetilde{C}_{ii}(2k-1+s)+ \widetilde{C}_{ii}(2k+1+s)
\,,}\\
\displaystyle{
\vphantom{\Big(}
2\delta_s(k) = \mathcal{N}_s(i,0;i,2k)\! - \!\sum_{j\sim i} \mathcal{N}_s(i,0;j,2k\!-\!1)  
\!-\! \sum_{j\sim i}\mathcal{N}_s(i,0;j,2k\!+\!1) \!+\!\! \sum_{j,h \sim i} \mathcal{N}_s(j,0;h,2k)
\,}\\
\displaystyle{
\quad = \mathcal{N}_s(i,0;i,2k)
\,.}
\end{array}
\]
As a consequence, for any $ k\geq 1 $, we have $ \alpha_s(k) - 2\delta_s(k) = 0 $, whereas
$$
\alpha_s(k) - 2\beta_s(k) = \delta_{s,-2k-2} - \delta_{s,-2k} - \delta_{s,2k} + \delta_{s,2k+2} 
\,,
$$
$$
\alpha_s(k) - 2\gamma_s(k) = \delta_{s,-2k} - \delta_{s,-2k+2} - \delta_{s,2k-2} + \delta_{s,2k} 
\,.
$$
To conclude, substituting the values of $ \alpha_s(k) - 2\beta_s(k)$, $\alpha_s(k) - 2\gamma_s(k)$
and $\alpha_s(k) - 2\delta_s(k)$ in \eqref{20190624:eq3},
we obtain exactly equation \eqref{20190430:eq1}.
\end{proof}

\begin{rem}
For $s = 0 $ the exponents $ \alpha_0(k)$, $\beta_0(k)$, $ \gamma_0(k)$, $ \delta_0(k)$ in Proposition \ref{prop:qgroth2} coincide with the corresponding powers of $ t_0 = t $ in the one-parameter case (cfr. \cite[Cor. 4.11]{H1}).
\end{rem}

By definition $Y_{i,r}A_{i,r+1}^{-1}Y_{i,r+2} \in \mathbb{Z}[Y_{j,s}]_{j \neq i, s \in \mathbb{Z}}$,
thus the monomial $Y_{i,r}A_{i,r+1}^{-1}Y_{i,r+2k}\in \widetilde{\mathscr{K}}_{i,\infty}$ if and only if $ k = 1$. 
The monomial $Y_{i,r}Y_{i,r+2k}A_{i,r+2k+1}^{-1}\notin \widetilde{\mathscr{K}}_{i,\infty}$, for any $k \geq 1 $.

Therefore, equation \eqref{20190430:eq1} shows that the elements $E_{i,\infty}(m)$ do not form a basis of $  \widetilde{\mathscr{K}}_{i,\infty}$.
However, as explained above, we will instead consider a particular quotient of the quantum torus
$  \widetilde{\mathscr{Y}}_{\infty} $, and define particular subalgebras 
$\mathscr{K}_{i,\infty} $  therein such that (the images of) the elements $E_{i,\infty}(m)$
now have the desired properties. The relations to define the new quantum tori naturally appeared in
the Proposition \ref{prop:qgroth2} above.

\begin{defi}\label{quotientorus}  Let $ \mathscr{Y}_{\infty} $ be the quotient of the quantum torus $  \widetilde{\mathscr{Y}}_{\infty} $
by the relations
$$
\mathscr{R}_k: \quad 1 = t_{-2k-2}^{-\frac12} t_{-2k}^{\frac12} t_{2k}^{\frac12} t_{2k+2}^{-\frac12}\text{, $k\geq 1$.}
$$
\end{defi}
Let moreover $\mathscr{K}_{i,\infty} $ be the image of $\widetilde{\mathscr{K}}_{i,\infty}$ in $ \mathscr{Y}_{\infty} $.

In the following, by \textit{basis} of a submodules of $ \mathscr{Y}_{\infty} $ we mean a basis over the ring 
$$ \mathbb{Z}[t_R \mid  R \in \mathfrak{U}]/ \big( \mathscr{R}_k \big)_{k\geq 1} .$$

\begin{prop} The images of the $E_{i,\infty}(m)$
in $ \mathscr{K}_{i,\infty}$ form a basis.
\end{prop}

We will still denote $E_{i,\infty}(m)$ its image in $\mathscr{K}_{i,\infty}$ and the product by $ \ast$.

\begin{proof}
From Proposition \ref{prop:qgroth2}, inside the quantum torus $\mathscr{Y}_{\infty} $ we have (for $k > 1$) :
\begin{equation}\label{20190624:eq4}
\begin{array}{l}
\displaystyle{
\vphantom{\Big(}
\Big(Y_{i,r} + Y_{i,r}A^{-1}_{i,r+1}\Big)
\ast
\Big(Y_{i,r+2} + Y_{i,r+2}A^{-1}_{i,r+3}\Big)
}\\
\displaystyle{
\vphantom{\Big(}
-
\Big(\prod_{s \in \mathbb{Z}} t_s^{\alpha_s(1)}\Big)
\Big(Y_{i,r+2} + Y_{i,r+2}A^{-1}_{i,r+3}\Big)
\ast
\Big(Y_{i,r} + Y_{i,r}A^{-1}_{i,r+1}\Big)
}\\
\displaystyle{
\vphantom{\Big(}
\in \mathbb{Z}[t_R \mid  R \in \mathfrak{U}] [Y_{i,r}A_{i,r+1}^{-1}Y_{i,r+2}]
\,,
}
\end{array}
\end{equation}
\begin{equation}
\begin{array}{l}
\displaystyle{
\vphantom{\Big(}
\Big(Y_{i,r} + Y_{i,r}A^{-1}_{i,r+1}\Big)
\ast
\Big(Y_{i,r+2k} + Y_{i,r+2k}A^{-1}_{i,r+2k+1}\Big)
}\\
\displaystyle{
\vphantom{\Big(}
-
\Big(\prod_{s \in \mathbb{Z}} t_s^{\alpha_s(k)}\Big)
\Big(Y_{i,r+2k} + Y_{i,r+2k}A^{-1}_{i,r+2k+1}\Big)
\ast
\Big(Y_{i,r} + Y_{i,r}A^{-1}_{i,r+1}\Big)
= 0
\,.
}
\end{array}
\end{equation}
Hence the same arguments as in \cite{H1} for the quantum case also work in the toroidal case.
\end{proof}

\begin{rem} The $\mathscr{R}_k $ ($ k \geq 1$) is a minimal set of relations so that the last result holds.
\end{rem}

\begin{rem}\label{20191029:rem1}
Given variables $ Y_{i,2r + \xi_i} $, $ Y_{j, 2s + \xi_j}$, $\big((i,r), (j,s) \in \hat{I}\big)$, we have
$$ Y_{i,2r + \xi_i} \ast Y_{j, 2s + \xi_j} = \prod_{a \in \mathbb{Z}} t_a^{\mathcal{N}_a(i,2r + \xi_i;j,2s + \xi_j)} Y_{j, 2s + \xi_j} \ast Y_{i,2r + \xi_i}\,.$$
By \cite[Prop. 2.1]{hl}, for any $ m \geq 1$, the entry $ \widetilde{C}_{i,j}(m) $ of $ \widetilde{C}(z) $ vanishes unless $ m + \xi_i - \xi_j $ is odd.
We thus conclude that $\mathcal{N}_a(i,2r + \xi_i;j,2s + \xi_j) = 0 $ for all $ a \in 2\mathbb{Z} +1$.
As a consequence, only parameter with even indices, that is the parameters $ t_{-2k}$, may appear in the quasi-commutation relations of any two monomials of $ \widetilde{\mathscr{Y}_{\infty}}$ or of $\mathscr{Y}_{\infty}$.
\end{rem}

\begin{defi}\label{torgr}
We define the toroidal Grothendieck ring to be the intersection
$$\mathscr{K}_{\infty} \big(=   \mathscr{K}_{\infty}(\mathscr{C}_{\mathbb{Z}})  \big) := \bigcap_{i \in I} \mathscr{K}_{i,\infty}.$$
\end{defi}

One obtains that every non-zero element of $ \mathscr{K}_{\infty}$
has at least one dominant monomial.
The argument is the same as in the one-parameter case (cfr. \cite[Lemma 5.7]{H1}),
and is based on the classical result of Frenkel and Reshetikhin \cite{Fre}.
Let us sum up the argument. One first observes that to each monomial in the $ Y_{i,r}^{\pm 1} $
can be associated a weight, as follows:
$$
m = \prod_{(i,r)\in \hat{I}} Y_{i,r}^{u_{i,r}(m)} \mapsto \sum_i \Big(\prod_r u_{i,r}(m) \Big) \omega_i
\,.
$$
For instance, the weight of the monomial $ Y_{i,r} $ is the fundamental weight $ \omega_i $
and the weight of the element $ A_{i,r}$ is the simple root $ \alpha_i$.
 The Nakajima ordering is a partial ordering on monomials in the variables $Y_{i,r}^{\pm 1}$ which is a refinement of the usual ordering on weights :
$$
m \leq m' \quad \text{if and only if } m'm^{-1} \text{ is a product of } A_{i,r}
\,.
$$
 
Then, one argues that an element $\chi$ in $\mathscr{K}_{\infty}$ contains a monomial $ M $ maximal with respect to the order $ \leq $. For each $i\in I$, $\chi$ is a linear combination of various $E_{i,\infty}(m)$ with the $m$
$i$-dominant. This implies that $M$ is equal to one of these $m$ for each $i$. So $M$ is dominant.

Next, we need to show that $\mathscr{K}_{\infty} $ is non-zero and has the correct size. As explained above,
it is easier if the fundamental representations are thin. The general statement is also true.

\begin{thm}\label{mainflat} For any dominant monomial $m$, there is a unique $F_\infty(m)\in \mathscr{K}_{\infty}$ so that 
$m$ is the unique dominant monomial occurring in $F_\infty(m)$ and its multiplicity is $1$. The $F_\infty(m)$ form a basis of 
$\mathscr{K}_\infty$ which is a flat deformation of $\mathscr{K}(\mathscr{C})$. \end{thm}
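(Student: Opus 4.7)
The plan is to adapt the Frenkel--Reshetikhin-type algorithm of \cite{H1, Fre} to the toroidal setting, using the quotient relations $\mathscr{R}_k$ introduced in Definition \ref{quotientorus} as the mechanism that allows the construction to close up.

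First I would handle uniqueness: if $F_1, F_2 \in \mathscr{K}_\infty$ both have $m$ as their unique dominant monomial with multiplicity $1$, then $F_1 - F_2 \in \mathscr{K}_\infty$ has no dominant monomial, so by the property established just after Definition \ref{torgr} (that every non-zero element of $\mathscr{K}_\infty$ contains at least one dominant monomial), $F_1 = F_2$. The linear independence of the $F_\infty(m)$ then follows automatically from the uniqueness of their highest dominant monomial.

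For existence, I would build $F_\infty(m)$ iteratively. Starting from the monomial $m$, at each stage one has a partial element $\chi$ whose ``defect'' consists of non-$i$-dominant contributions for various $i \in I$. For each such $i$, one corrects $\chi$ by subtracting an appropriate $\mathbb{Z}[t_a^{\pm \frac12}]$-linear combination of elements $E_{i,\infty}(m')$ with $m' < m$ in the Nakajima ordering, so as to restore $i$-dominance. The essential point is that the corrections coming from different $i$'s must be mutually consistent, and this is exactly where the quotient relations $\mathscr{R}_k$ come in: Proposition \ref{prop:qgroth2} shows that in the ambient quantum torus $\widetilde{\mathscr{Y}}_\infty$ the obstruction to consistency is precisely a multiple of the $\mathscr{R}_k$-relations, so passing to $\mathscr{Y}_\infty$ kills these obstructions. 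Termination (in the appropriate completion) follows because every new monomial produced is strictly smaller than $m$ in the Nakajima order, and for fixed $m$ the set of monomials below $m$ with bounded weight multiplicities is finite.

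Once the $F_\infty(m)$ are constructed, spanning follows by a standard descent: given $0 \neq \chi \in \mathscr{K}_\infty$, pick a maximal dominant monomial $m$ of $\chi$, subtract the corresponding $\mathbb{Z}[t_a^{\pm \frac12}]$-multiple of $F_\infty(m)$, and iterate on the resulting element, whose dominant monomials are all strictly smaller. For flatness, the specialization at $t_a = 1$ for all $a$ gives a ring morphism $\mathscr{K}_\infty \to \mathscr{K}(\mathscr{C})$ sending $F_\infty(m) \mapsto F(m)$; since both $\{F_\infty(m)\}$ and $\{F(m)\}$ are bases indexed by dominant monomials, $\mathscr{K}_\infty$ is a flat deformation of $\mathscr{K}(\mathscr{C})$.

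The main obstacle I expect is verifying that the relations $\mathscr{R}_k$ really are sufficient at \emph{every} step of the inductive construction, not just for the pairwise products analyzed in Proposition \ref{prop:qgroth2}. In the one-parameter case of \cite{H1} this is a delicate bookkeeping on how the corrections propagate; in the toroidal case one additionally has to track that the coefficients produced at each stage remain in $\mathbb{Z}[t_a^{\pm \frac12}]$ and do not force new relations between the parameters. My expectation is that Remark \ref{20191029:rem1} (only even indices appear in quasi-commutation) together with the explicit form of the obstructions computed in Proposition \ref{prop:qgroth2} will suffice, but this is the step that needs the most care.
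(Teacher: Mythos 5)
Your proposal follows essentially the same route as the paper's proof, which invokes the recursive algorithm of \cite[Theorem 5.11]{H1}, with uniqueness from the no-dominant-monomial argument, the $\mathscr{R}_k$ relations killing the obstructions identified in Proposition~\ref{prop:qgroth2}, and flatness via specialization. The consistency concern you flag at the end is handled in the paper by a reduction to Lie algebras of rank~$2$, exactly as in the one-parameter case, and the paper further notes (citing \cite{HO} and \cite[Subsection~7.3]{H2}) that the completion you mention parenthetically is not actually needed.
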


\begin{proof} Thanks to the discussion above, we can now follow \cite[Theorem 5.11]{H1} and the recursive algorithm therein.
The proof is the same, in particular the fact that the algorithm used to construct  $ F_{\infty}(m)$ never produces conflicting results.
For this, it is sufficient to check the cases of Lie algebras of rank $2$, which is handled as in the one-parameter case.
We also point out that, although in \cite{H1} some completion of both the quantum torus and the deformed Grothendieck ring are introduced in order to carry out the argument, this is not necessary, as it is pointed out in \cite{HO} (see also \cite[Subsection 7.3]{H2}). For the last point, the $\mathbb{Z}$-basis $F(m)$ of $\mathscr{K}(\mathscr{C})$ gets deformed into a basis of $\mathscr{K}_\infty$.
\end{proof}

\begin{defi}
For a fundamental module $ V_i(q^r) $, $ r \in \mathbb{Z}$, we define its $(q,\infty)$-character (or its class in $\mathscr{K}_{\infty}$) as
$$ [ V_i(q^r)]_{q,\infty} := F_{\infty}(Y_{i,r})\,.$$
\end{defi}

\begin{rem}\label{usesmall} The definition $ [ V_i(q^r)]_{q,\infty}$ is consistent with the definition $[ V_i(q^r)]_{q,\infty} = \chi_{q,t}( V_i(q^r))$ in the case when the fundamental module $ V_i(q^r)$ is thin. This is clear in this case as the three algorithms used to 
construct $ F(Y_{i,r})$, $ F_{t}(Y_{i,r})$ and $F_{\infty}(Y_{i,r})$ give the same result (this is analog to \cite[Corollary 5.3]{HL:qGro}).
\end{rem}

The following is obtained as in the one-parameter case.

\begin{prop}\label{fundgen} The $[ V_i(q^r)]_{q,\infty}$ generate $\mathscr{K}_\infty$ as a $\mathbb{Z}[ t_R \mid R \in \mathfrak{U}] $-algebra.\end{prop}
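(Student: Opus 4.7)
The plan is to adapt the argument used in the one-parameter case \cite{H1}, relying on the basis $\{F_\infty(m)\}_{m \text{ dominant}}$ of $\mathscr{K}_\infty$ provided by Theorem \ref{mainflat} and on an induction with respect to the Nakajima partial ordering $\leq$ on monomials. Since the $[V_i(q^r)]_{q,\infty} = F_\infty(Y_{i,r})$ already lie in $\mathscr{K}_\infty$, the subring $\mathcal{F}\subset \mathscr{K}_\infty$ they generate is stable under the $*$-product, and it suffices to show that each basis element $F_\infty(m)$ belongs to $\mathcal{F}$.

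Given a dominant monomial $m = \prod_{(i,r)\in \hat{I}} Y_{i,r}^{u_{i,r}(m)}$, I would form the ordered $*$-product
$$
P(m) := \overset{\rightarrow}{\mbox{\Large *}}_{(i,r)\in\hat{I}} [V_i(q^r)]_{q,\infty}^{*\, u_{i,r}(m)}
$$
(with respect to some fixed total order on $\hat{I}$), which lies in $\mathcal{F}$. By construction every monomial occurring in $F_\infty(Y_{i,r})$ is $\leq Y_{i,r}$, so every monomial of $P(m)$ is $\leq m$. Moreover, the monomial $m$ itself occurs in $P(m)$ with coefficient a Laurent monomial $u \in \mathbb{Z}[t_a^{\pm 1/2}]$, arising only from the quasi-commutation of the highest $Y_{i,r}$ factors; in particular $u$ is invertible.

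I would then argue by induction with respect to $\leq$ on the (finitely many) dominant monomials $m'\leq m$ that may appear. The element $F_\infty(m) - u^{-1}\, P(m)$ lies in $\mathscr{K}_\infty$ and, by the uniqueness part of Theorem \ref{mainflat} combined with the fact that $m$ no longer occurs as a dominant monomial, can be expanded on the basis $\{F_\infty(m')\}$ with $m' < m$ dominant. By the inductive hypothesis each such $F_\infty(m')$ belongs to $\mathcal{F}$, hence so does $F_\infty(m)$. The base case corresponds to $m$ minimal among dominant monomials with weight $\leq \operatorname{wt}(m)$, in which case the above expansion is empty and $F_\infty(m) = u^{-1} P(m) \in \mathcal{F}$.

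The main potential obstacle is ensuring that the induction is well-founded and that the coefficients of the expansion of $F_\infty(m) - u^{-1}P(m)$ on the basis are indeed in $\mathbb{Z}[t_a^{\pm 1/2}\mid a\in\mathbb{Z}]$; the first point follows because the set of dominant monomials $m'$ with $m' \leq m$ is finite (only finitely many weights $\leq \operatorname{wt}(m)$ occur, and for each weight only finitely many dominant monomials of that weight), while the second point is a direct consequence of Theorem \ref{mainflat}, which asserts that $\{F_\infty(m')\}$ is a $\mathbb{Z}[t_a^{\pm 1/2}]$-basis of $\mathscr{K}_\infty$. The rank-two verification needed in \cite{H1} to guarantee compatibility of the recursive algorithm carries over unchanged in our multi-parameter setting, since the additional relations $\mathscr{R}_k$ only rigidify coefficients and do not alter the structure of the ordering argument.
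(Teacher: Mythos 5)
Your argument is correct and is essentially the one the paper intends: the paper simply says the result "is obtained as in the one-parameter case," and the one-parameter proof in \cite{H1} is exactly this triangularity argument — expand the ordered product of fundamental classes in the basis $\{F_\infty(m')\}$ of Theorem \ref{mainflat}, observe that the top dominant monomial $m$ occurs with an invertible coefficient in $\mathbb{Z}[t_a^{\pm\frac12}]$ while all other dominant monomials are strictly smaller for the Nakajima order, and induct on that (well-founded, finite) order.
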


\subsection{Subcategories and truncations}\label{truncinf}
Let $ \mathscr{C}' $ be one of the monoidal subcategories $ \mathscr{C}_1\,, \mathscr{C}_1^{ob}$ or $ \mathscr{C}_{\mathcal{Q}}$.
The toroidal Grothendieck ring $\mathscr{K}_{\infty}(\mathscr{C}')$ is defined as the subalgebra of $\mathscr{K}_{\infty}$
generated by the $(q,\infty)$-characters of fundamental representations which are in $\mathscr{C}'$. 

Note that the number of parameters which actually play a role in the structure of $\mathscr{K}_\infty(\mathscr{C}')$, that is that
occur in the relations among the $(q,\infty)$-characters of fundamental representations, is lower than for the whole $\mathscr{K}_\infty$
in general (we will see several examples in the rest of this paper).

We obtain also a natural definition of truncated $(q,\infty)$-character of fundamental representations as a truncation of $F_{\infty}(Y_{i,r})$, as
for $(q,t)$-characters in Section \ref{sec:qgr}, point (5). It will be denoted by $\widetilde{[L]}_{q,\infty}$ for a fundamental representation $L$. 
As for $(q,t)$-characters, they generate a subring of the quantum torus $\mathscr{Y}_\infty$ different but isomorphic to $\mathscr{K}_\infty(\mathscr{C}')$.
Indeed, as we proved $\mathscr{K}_{\infty}$ is a flat deformation of the classical Grothendieck ring, its structure is governed by the multiplicities of
dominant monomials. All of them occur in the truncated $(q,\infty)$-characters by construction.

\begin{rem} As a generalization of Remark \ref{usesmall}, if the truncated $q$-character of a representation $L$ has a unique dominant monomial and all the monomials of the truncation 
have multiplicity $1$, then it lies in the subring of truncated $(q,\infty)$-characters. Then the truncated $(q,\infty)$-character of $L$ can 
be defined as a sum of commutative monomials.
\end{rem}

\subsection{The case of $sl_2$}\label{sec:casesl2}
In the case of $ \overline{\mathfrak{g}}= sl_2 $, the quantum Cartan matrix is $C(z) = (z + z^{-1}) $ and its inverse is (cfr. \eqref{20190221:eq2})
$$
\widetilde{C}(z) = (z - z^{3})\sum_{n\geq 0}z^{4n} = z - z^3 + z^5 - z^7 + z^9 + \ldots
\,.
$$
In particular, the periodicity property reads $ \widetilde{C}(m) = \widetilde{C}(m+4)$, and moreover $ \widetilde{C}(m) = - \widetilde{C}(m+2)$ for $ m \geq 1$.

By the results of the previous section, the toroidal Grothendieck ring $\mathscr{K}_{\infty}$ is the subalgebra of the quantum torus $ \mathscr{Y}_{\infty} $ generated by the (images of the) classes of the fundamental modules
$$[ V(q^{2r})]_{q,\infty}: = \chi_{q,t}(V(q^{2r})) = Y_{2r} + Y_{2r+2}^{-1} \,, r \in \mathbb{Z}\,. $$
In particular, the simple modules are indexed by dominant commutative monomials in the variables $ Y_{2r}$, $ r \in \mathbb{Z}$, and it is possible to check that (assuming $ p < s $) 
$$ \mathcal{N}_a(1,2p;1,2s) = 0 \text{ for } a > 2(s - p)\,, \quad \text{and} \quad \mathcal{N}_{2(s-p)}(1,2p;1,2s) = 1\,. $$
The relations $ \mathscr{R}_k $ give, for each $ k $, $t_{-2k-2}^\frac12 t_{2k+2}^\frac12 = t_{-2k}^\frac12 t_{2k}^\frac12 = \ldots = t_{-2}^\frac12 t_2^\frac12$.
Let us denote this quantity by $s$. For $ h \geq 1$, we can compute,
\begin{equation}\label{20190227:eq9}
\begin{array}{l}
\displaystyle{
\vphantom{\Big(}
[ V(q^{2r}) ]_{q,\infty}
\ast
[ V(q^{2r+2h}) ]_{q,\infty}
=
\alpha(h)
 [L(Y_{2r}Y_{2r+2h} )]_{q,\infty}
 + \delta_{h,1}
\,,}
\end{array}
\end{equation}
where $ [L(Y_{2r}Y_{2r+2h} )]_{q,\infty} := Y_{2r} Y_{2r+2h} +
Y^{-1}_{2r+2} Y^{-1}_{2r+2h+2} +Y_{2r} Y^{-1}_{2r+2h+2} + \delta_{h>1} Y^{-1}_{2r+2} Y_{2r+2h}  $
and $ \alpha(h) =  {t_0}^{(-1)^h}s^{-(-1)^h}$.
We obtain the commutation relations
\begin{equation}\label{20190829:eq1aa}
\begin{array}{l}
\displaystyle{
\vphantom{\Big(}
[V(q^{2r}) ]_{q,\infty}
\ast
[V(q^{2r+2})]_{q,\infty}
-
t_0^{-2}s^2
[V(q^{2r+2})]_{q,\infty}
\ast
[V(q^{2r})]_{q,\infty}
=
1
-
t_0^{-2}s^2
\,,}\\
\displaystyle{
\vphantom{\Big(}
[V(q^{2r})]_{q,\infty}
\ast
[V(q^{2r+2h})]_{q,\infty}
=
\alpha(h)^2
[V(q^{2r+2h})]_{q,\infty}
\ast
[V(q^{2r})]_{q,\infty}
\,,\quad \text{for } h > 1
\,.}
\end{array}
\end{equation}

By \cite[Theorem 7.3, Example 7.4]{HL:qGro}, the $t$-deformed quantum Grothendieck ring for $ sl_2$ has a presentation given by generators $ \chi_{q,t}(V(2r))$, $ r \in \mathbb Z$, and relations
\begin{equation}
\begin{array}{l}
\displaystyle{
\vphantom{\Big(}
\chi_{q,t}(V(q^{2r})) \ast \chi_{q,t}(V(q^{2r+2}))
=
t^{-2}
\chi_{q,t}(V(q^{2r+2}))
\ast 
\chi_{q,t}(V(q^{2r}))
+
1 - t^{-2}
\,,}\\
\displaystyle{
\vphantom{\Big(}
\chi_{q,t}(V(q^{2r})) \ast \chi_{q,t}(V(q^{2r'}))
=
t^{2(-1)^{r'-r}}
\chi_{q,t}(V(q^{2r'}))
\ast
\chi_{q,t}(V(q^{2r}))
\,, \quad \text{if } r' > r+1
\,.}
\end{array}
\end{equation}

As $ \mathscr{K}_{\infty}$ is generated as a $ \mathbb{Z}[ t_R\mid  R \in \mathfrak{U}]$-algebra
by the classes of fundamental modules $[V(2r)]_{q,\infty}$, $ r \in \mathbb Z$,
equation \eqref{20190829:eq1aa} provides a surjective homomorphism 
$$
\begin{array}{c}
\displaystyle{
\vphantom{\Big(}
\mathscr{K}_{t}(\mathscr{C}_{\mathbb{Z}}) \longrightarrow  \mathscr{K}_{\infty}
\,,}\\
\displaystyle{
\vphantom{\Big(}
\chi_{q,t}(V(q^{2r}))  \mapsto [V(q^{2r})]_{q,\infty}
\,,\quad
t \mapsto t_0s^{-1} \,.
}
\end{array}
$$
Since $ \mathscr{K}_{\infty}$ is a flat deformation of $ \mathscr{K}_{t}(\mathscr{C}_{\mathbb{Z}})$ this map is in fact an isomorphism. 
Hence, in the $sl_2$-case, we just get the quantum deformed Grothendieck ring $\mathscr{K}_t(\mathscr{C}_{\mathbb{Z}})$ with an extension of scalars.
Even if not providing a genuine toroidal structure, this example works as a warmup for the general case.
It is moreover useful to get an idea for the issues and the general strategy needed in order to define toroidal $T$-systems, as shown below.

We recall that (\cite{N2}, \cite[Prop 5.6]{hl}), given $p \in 2 \mathbb{Z} $ and $ k \geq 1 $, the $ (q,t)$-characters of the Kirillov-Reshetikhin modules $ W_{k,p}:= L(m_{k,p})$ satisfy the following quantum $T$-system:
\begin{equation}\label{20190227:eq7}
\chi_{q,t}(W_{k,p}) \ast \chi_{q,t}(W_{k,p+2})
 = t^{-1}
 \chi_{q,t}(W_{k-1,p+2})
\ast
\chi_{q,t}(W_{k+1,p})
+ 1
\,.
\end{equation}
See \cite{KNS} for a general review on $T$-systems.

\begin{defi} For $p \in 2 \mathbb{Z},  k \geq 1 $, we define the $(q,\infty)$-character of the KR-module :
\begin{equation}\label{20190227:eq5}
[W_{k,p}]_{q,\infty} := F_\infty(Y_p \cdots Y_{p+2(k-1)}) 
=
\sum_{i=0}^k
Y_p \cdots Y_{p+2(i-1)}Y_{p+2(i+1)}^{-1}\cdots Y_{p+2k}^{-1}\in \mathscr{K}_{\infty}
\,.
\end{equation}
\end{defi}

We remark that the classes $[W_{k,p}]_{q,\infty}$ a priori live inside the quantum torus $\mathscr{Y}_{\infty}$, and we should prove that they belong to the toroidal Grothendieck ring $\mathscr{K}_{\infty}$. One way to do so is to give a recursive formula which expresses each $[W_{k,p}]_{q,\infty} $ as a polynomial in the classes of fundamental modules $ [V(q^p)]_{q,\infty} = [W_{1,p}]_{q,\infty} $, as in Proposition \ref{20190228:prop1} below.  

An argument analogous to that of \cite[Prop 5.6]{HL:qGro} proves the following deformed version of the quantum $T$-system \eqref{20190227:eq7} for the classes $[W_{k,p}]_{q,\infty}$.
\begin{prop}
Let $p \in 2 \mathbb{Z} $ and $ k \geq 1 $. Then, the classes $ [W_{k,p}]_{q,\infty}$ satisfy :
\begin{equation}\label{20190227:eq6}
[W_{k,p}]_{q,\infty} \ast [W_{k,p+2}]_{q,\infty}
=
t_0^{-1}
s
[W_{k-1,p+2}]_{q,\infty}
\ast
[W_{k+1,p}]_{q,\infty}
+
1
\,.
\end{equation}
\end{prop}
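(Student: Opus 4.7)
The plan is to derive the toroidal $T$-system \eqref{20190227:eq6} from the one-parameter quantum $T$-system \eqref{20190227:eq7} by transferring it through the algebra isomorphism $\mathscr{K}_t(\mathscr{C}_{\mathbb{Z}}) \xrightarrow{\sim} \mathscr{K}_\infty$ constructed earlier in Section \ref{sec:casesl2}, which sends $\chi_{q,t}(V(q^{2r})) \mapsto [V(q^{2r})]_{q,\infty}$ and $t \mapsto t_0 s^{-1}$.

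The first step is to verify that under this isomorphism the quantum Kirillov--Reshetikhin class $\chi_{q,t}(W_{k,p}) = F_t(m_{k,p})$ is sent to $[W_{k,p}]_{q,\infty} = F_\infty(m_{k,p})$. On the level of fundamental generators the isomorphism is the identity on the underlying $Y$-monomial expansion (the $sl_2$ fundamental modules being thin gives $\chi_{q,t}(V(q^{2r})) = Y_{2r} + Y_{2r+2}^{-1} = [V(q^{2r})]_{q,\infty}$). Writing $F_t(m_{k,p})$ as a polynomial in these generators and pushing it through the isomorphism produces an element of $\mathscr{K}_\infty$ whose $Y$-monomial content agrees with that of $F_t(m_{k,p})$ up to the parameter substitution $t \mapsto t_0 s^{-1}$. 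In particular its unique dominant monomial is $m_{k,p}$ with multiplicity one, so by the characterization of the basis $\{F_\infty(m)\}$ in Theorem \ref{mainflat} the image must be $F_\infty(m_{k,p})$. With this identification in hand, applying the isomorphism termwise to \eqref{20190227:eq7} yields \eqref{20190227:eq6} immediately: the scalar $t^{-1}$ becomes $(t_0 s^{-1})^{-1} = t_0^{-1} s$, while the KR classes transform as above.

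The main technical point is precisely this basis identification: the isomorphism is defined only on generators, so one must ensure that it matches the distinguished canonical elements $F_t(m)$ and $F_\infty(m)$ on the two sides. An alternative, self-contained route --- the argument alluded to in the paper as analogous to \cite[Prop 5.6]{HL:qGro} --- would instead expand both sides of \eqref{20190227:eq6} directly using the explicit formula \eqref{20190227:eq5} and the quasi-commutation relations \eqref{20190409:eq1}, and then invoke the defining relations $\mathscr{R}_k$ of the quotient quantum torus $\mathscr{Y}_\infty$ to force cancellation term-by-term. In that approach the constant $1$ on the right would arise from the unique pair of summands $(i,j)=(0,k)$ in the expansion of the left-hand side whose commutative monomial collapses to $1$, while the remaining contributions would reorganize, via the relations $\mathscr{R}_k$ and the shared commutative monomial $m_{k,p}\, m_{k,p+2} = m_{k-1,p+2}\, m_{k+1,p}$, into the scalar multiple of $[W_{k-1,p+2}]_{q,\infty} \ast [W_{k+1,p}]_{q,\infty}$.
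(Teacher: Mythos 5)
Your primary route --- transferring the one-parameter $T$-system \eqref{20190227:eq7} through the isomorphism $\mathscr{K}_t(\mathscr{C}_{\mathbb{Z}}) \xrightarrow{\sim} \mathscr{K}_\infty$ --- is genuinely different from the paper's proof, which is the direct expansion argument analogous to \cite[Prop.~5.6]{HL:qGro} that you only sketch in your last paragraph. You correctly isolate the technical hinge of your approach, but you do not close it. The step ``pushing $F_t(m_{k,p})$ through the isomorphism produces an element of $\mathscr{K}_\infty$ whose $Y$-monomial content agrees with that of $F_t(m_{k,p})$ up to $t \mapsto t_0 s^{-1}$'' is not a formal consequence of the isomorphism being correct on generators. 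The isomorphism was constructed as an abstract algebra map between the Grothendieck rings; a polynomial expression $P\big([V(q^{2r})]_t;\,t\big)$ is re-expanded in commutative monomials using the $*$-product of $\mathscr{Y}_t$, while $P\big([V(q^{2r})]_{q,\infty};\,t_0 s^{-1}\big)$ is re-expanded using the \emph{different} $*$-product of $\mathscr{Y}_\infty$. The two expansions have matching commutative-monomial coefficients only if the assignment $Y_{2r}\mapsto Y_{2r}$, $t\mapsto t_0 s^{-1}$ already lifts to a homomorphism of quantum tori $\mathscr{Y}_t\hookrightarrow\mathscr{Y}_\infty$, which in the $sl_2$ case amounts to the identity
$$
(t_0 s^{-1})^{\mathcal{N}_0(1,0;1,2k)} \;=\; \prod_{a\in\mathbb{Z}} t_a^{\mathcal{N}_a(1,0;1,2k)} \quad \text{modulo the relations } \mathscr{R}_j \ (j\geq 1),\qquad k\geq 1.
$$
This is true --- using $\widetilde C(2m+1)=(-1)^m$ one finds $\mathcal{N}_{2b}(1,0;1,2k)=1$ for $b=\pm k$, $2(-1)^{b+k}$ for $|b|<k$, and $0$ otherwise, and with $t_{-2j}t_{2j}=s^2$ both sides collapse to $t_0^{2(-1)^k}s^{-2(-1)^k}$ --- but it is a nonzero verification your proof asserts rather than performs, and it is precisely what separates ``the isomorphism is defined on generators'' from ``$F_t(m)\mapsto F_\infty(m)$.''

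Once that torus-level compatibility is supplied, your reduction is sound, and it arguably buys something the paper's route does not: the toroidal $T$-system is deduced from the published one-parameter result plus a single rank-one computation, rather than re-expanding the $(k+1)^2$-term product of KR classes. Your description of the alternative route (the $(i,j)=(0,k)$ pair contributing the constant $1$, with $m_0*m_k'=m_0*m_0^{-1}=1$ as the unique such cancellation) is consistent with the direct argument, but it is a sketch and not a proof. As written, then, the proposal has a genuine gap in its main argument, which your own text flags but does not fill, and the backup argument is not carried out.
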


Note that equation \eqref{20190227:eq9} with $ h = 1 $ is an instance of \eqref{20190227:eq6} in the case of $ k = 1 $.

\begin{prop}\label{20190228:prop1}
Let $ p \in 2\mathbb{Z}$, $ \ell \geq 1 $. Set $ a(\ell) = \delta_{\ell \notin 2\mathbb{Z}}$. We have :
\begin{equation}\label{20190228:eq1}
[W_{\ell,p}]_{q,\infty} =
t_0s^{-1}
\Big(
\big(t_0s^{-1}\big)^{-a(\ell)}
[W_{1,p}]_{q,\infty} \ast [W_{\ell-1,p+2}]_{q,\infty} - [W_{\ell-2,p+4}]_{q,\infty}
\Big)
\,.
\end{equation}
\end{prop}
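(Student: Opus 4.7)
The plan is to verify \eqref{20190228:eq1}, equivalently
$$[W_{1,p}]_{q,\infty}\ast[W_{\ell-1,p+2}]_{q,\infty} = (t_0s^{-1})^{a(\ell)}\bigl(t_0^{-1}s\,[W_{\ell,p}]_{q,\infty} + [W_{\ell-2,p+4}]_{q,\infty}\bigr),$$
by a direct monomial-by-monomial computation in the quantum torus $\mathscr{Y}_\infty$. The base case $\ell=2$ (for which $a(2)=0$) is an immediate rearrangement of the $T$-system \eqref{20190227:eq6} at $k=1$, using $[W_{0,\cdot}]_{q,\infty}=1$.

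For $\ell\geq 3$, I would use the explicit formula \eqref{20190227:eq5} to write $[W_{1,p}]_{q,\infty}=Y_p+Y_{p+2}^{-1}$ and $[W_{\ell-1,p+2}]_{q,\infty}=\sum_{j=0}^{\ell-1}N_j$ with $N_j=Y_{p+2}\cdots Y_{p+2j}\,Y_{p+2(j+2)}^{-1}\cdots Y_{p+2\ell}^{-1}$. By the quasi-commutation rule \eqref{20190429:eq1}, the product $Y_p\ast N_j$ is a scalar $\alpha_j$ times the commutative monomial $Y_pN_j$, which is the $(j{+}1)$-st monomial of $[W_{\ell,p}]_{q,\infty}$; the product $Y_{p+2}^{-1}\ast N_0$ is a scalar $\beta_0$ times the lowest monomial of $[W_{\ell,p}]_{q,\infty}$; and for $j\geq 1$, $Y_{p+2}^{-1}\ast N_j$ is a scalar $\gamma_j$ times the $(j{-}1)$-st monomial of $[W_{\ell-2,p+4}]_{q,\infty}$. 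Thus every monomial of the two targeted KR-classes appears exactly once on the left-hand side, so the identity reduces to showing that $\alpha_0=\cdots=\alpha_{\ell-1}=\beta_0=:C$ and $\gamma_1=\cdots=\gamma_{\ell-1}=:D$, with $C=(t_0s^{-1})^{a(\ell)}t_0^{-1}s$ and $D=(t_0s^{-1})^{a(\ell)}$.

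The crucial observation is that these scalars are independent of $j$. Using \eqref{20190801:eq3} and the explicit form of $\widetilde{C}_{1,1}(m)$ in the $sl_2$ case (vanishing for $m\leq 0$ and for even $m>0$, equal to $(-1)^{(m-1)/2}$ for odd $m>0$, and $4$-periodic), the ratio $\alpha_{j+1}/\alpha_j$ collapses to $(t_{-2(j+1)}t_{2(j+1)})^{-1/2}(t_{-2(j+2)}t_{2(j+2)})^{1/2}$, which equals $1$ by the relation $\mathscr{R}_{j+1}$ (both pairs equal $s^2$ after applying the quotient). Analogous telescopings give $\gamma_{j+1}/\gamma_j=1$ and $\beta_0=\alpha_0$. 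Evaluating one representative case (e.g.\ $j=\ell-1$) then pins down $C$ and $D$, and solving for $[W_{\ell,p}]_{q,\infty}$ yields \eqref{20190228:eq1}.

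The hard part is the scalar bookkeeping: for each even $a$ with $|a|\leq 2\ell$, one must track the cumulative contribution to the exponent of $t_a^{1/2}$ using the alternating-sign/periodicity pattern of $\widetilde{C}_{1,1}$ and collapse via the relations $\mathscr{R}_h$. These relations are tailored precisely so that the $j$-dependence of the scalars cancels and only the parity-dependent factor $(t_0s^{-1})^{a(\ell)}$ survives, which is why the formula bifurcates on the parity of $\ell$ through $a(\ell)$.
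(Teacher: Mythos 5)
Your proof is correct, but it follows a genuinely different route from the paper's. The paper argues by induction on $\ell$ (with base cases $\ell=2,3$ treated separately because the induction has two steps), taking the toroidal $T$-system \eqref{20190227:eq6} as the main algebraic input together with the bar-invariance of the classes $[W_{k,p}]_{q,\infty}$, and manipulating products of KR classes without ever opening them up into monomials. You instead expand $[W_{1,p}]_{q,\infty}=Y_p+Y_{p+2}^{-1}$ and $[W_{\ell-1,p+2}]_{q,\infty}$ into their commutative monomials via \eqref{20190227:eq5}, match each product $Y_p\ast N_j$, $Y_{p+2}^{-1}\ast N_j$ bijectively with a monomial of $[W_{\ell,p}]_{q,\infty}$ or $[W_{\ell-2,p+4}]_{q,\infty}$, and show the scalar prefactors are constant in $j$. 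I checked the key computations: $\mathcal{N}_a(1,0;1,2k)+\mathcal{N}_a(1,0;1,2k+2)=-\delta_{a,2k}+\delta_{a,2k+2}-\delta_{a,-2k}+\delta_{a,-2k-2}$, so the ratios $\alpha_{j+1}/\alpha_j$, $\gamma_{j+1}/\gamma_j$ and $\beta_0/\alpha_0$ all collapse to $1$ under $\mathscr{R}_k$, and the representative evaluation $\alpha_{\ell-1}=(t_0^{-1}s)^{(\ell-1)\bmod 2}$, $\gamma_{\ell-1}=(t_0s^{-1})^{a(\ell)}$ reproduces exactly the constants $C=(t_0s^{-1})^{a(\ell)}t_0^{-1}s$ and $D=(t_0s^{-1})^{a(\ell)}$ demanded by \eqref{20190228:eq1}. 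What your approach buys: it is self-contained (it does not presuppose the toroidal $T$-system for all $k$, only the base case, which itself could be verified the same way) and it makes transparent why the relations $\mathscr{R}_k$ are exactly what is needed for the coefficients to stabilize. What it costs is the exponent bookkeeping, which the paper's structural induction avoids; the paper's route also generalizes more readily beyond the thin $sl_2$ situation, where explicit monomial formulas such as \eqref{20190227:eq5} are not available.
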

\begin{proof}
We work by induction on $ \ell $. For $ \ell = 2 $, it is the $T$-system \eqref{20190227:eq6} for $ k = \ell -1 = 1$. As we need a two-step induction, we shall also consider the case $ \ell = 3 $. The $T$-system \eqref{20190227:eq6} for $ k = \ell -1 = 2 $ can be rewritten as
\begin{equation}\label{20190228:eq3}
[W_{3,p}]_{q,\infty}
=
t_0s^{-1}
[W_{1,p+2}]_{q,\infty}^{-1}
\ast
\Big(
[W_{2,p}]_{q,\infty} \ast [W_{2,p+2}]_{q,\infty} - 1
\Big)
\,.
\end{equation}
By the bar-invariance of $[W_{k,p}]_{q,\infty}$, we also have
\begin{equation}\label{20190228:eq5}
[W_{2,p}]_{q,\infty}
=
t_0^{-1}s
\Big(
[W_{1,p+2}]_{q,\infty} \ast [W_{1,p}]_{q,\infty} - 1
\Big)
\,.
\end{equation}
Substituting, we obtain the claim
\begin{equation}\label{20190228:eq6}
\begin{array}{l}
\displaystyle{
\vphantom{\Big(}
[W_{3,p}]_{q,\infty}
=
t_0s^{-1}
\Big(
[W_{1,p}]_{q,\infty} 
\ast
t_0^{-1}s
[W_{2,p+2}]_{q,\infty} 
-
[W_{1,p+4}]_{q,\infty}
\Big)
\,.}
\end{array}
\end{equation}
Next, let us assume that $ \ell > 3 $ and that \eqref{20190228:eq1} holds for all values  $\ell' $ strictly smaller than $ \ell$.  By the bar-invariance of $[W_{k,p}]_{q,\infty}$ this amounts to assume that the identity
\begin{equation}\label{20190228:eq7}
[W_{\ell',p}]_{q,\infty} =
t_0^{-1}s
\Big(
\big(t_0s^{-1}\big)^{a(\ell')}
[W_{\ell'-1,p+2}]_{q,\infty}
\ast
[W_{1,p}]_{q,\infty} - [W_{\ell'-2,p+4}]_{q,\infty}
\Big)
\,,
\end{equation}
holds for all $ \ell' < \ell$ as well. Let us consider the toroidal $T$-system \eqref{20190227:eq6} for $ k = \ell -1$. 
By substituting the recursive formulas \eqref{20190228:eq7} for $ [W_{\ell-1,p}]_{q,\infty} $ and \eqref{20190228:eq1} for $ [W_{\ell-1,p+2}]_{q,\infty} $, the LHS of \eqref{20190227:eq6} becomes
\begin{equation}\label{20190228:eq8}
\begin{array}{l}
\displaystyle{
\vphantom{\Big(}
\Big(
\big(t_0s^{-1}\big)^{a(\ell-1)}
[W_{\ell-2,p+2}]_{q,\infty}
\ast
[W_{1,p}]_{q,\infty} - [W_{\ell-3,p+4}]_{q,\infty}
\Big)
\,}\\
\displaystyle{
\vphantom{\Big(}
\ast
\Big(
\big(t_0^{-1}s\big)^{a(\ell-1)}
[W_{1,p+2}]_{q,\infty} \ast [W_{\ell-2,p+4}]_{q,\infty} - [W_{\ell-3,p+6}]_{q,\infty}
\Big)
\,}\\
\displaystyle{
\vphantom{\Big(}
=
[W_{\ell-2,p+2}]_{q,\infty}
\ast
[W_{1,p}]_{q,\infty}
\ast
[W_{1,p+2}]_{q,\infty} \ast [W_{\ell-2,p+4}]_{q,\infty}
\,}\\
\displaystyle{
\vphantom{\Big(}
-
\big(t_0s^{-1}\big)^{a(\ell-1)}
[W_{\ell-2,p+2}]_{q,\infty}
\ast
[W_{1,p}]_{q,\infty}
\ast
[W_{\ell-3,p+6}]_{q,\infty}
\,}\\
\displaystyle{
\vphantom{\Big(}
-
\big(t_0^{-1}s\big)^{a(\ell-1)}
[W_{\ell-3,p+4}]_{q,\infty}
\ast
[W_{1,p+2}]_{q,\infty} \ast [W_{\ell-2,p+4}]_{q,\infty}
\,}\\
\displaystyle{
\vphantom{\Big(}
+
t_0^{-1}s
[W_{\ell-4,p+6}]_{q,\infty}
\ast
[W_{\ell-2,p+4}]_{q,\infty}
+ 1
\,}\\
\displaystyle{
\vphantom{\Big(}
=
[W_{\ell-2,p+2}]_{q,\infty}
\ast
\Big(
[W_{1,p}]_{q,\infty}
\ast
\Big(
[W_{1,p+2}]_{q,\infty} \ast [W_{\ell-2,p+4}]_{q,\infty}
-
\big(t_0s^{-1}\big)^{a(\ell-1)}
[W_{\ell-3,p+6}]_{q,\infty}
\Big)
\,}\\
\displaystyle{
\vphantom{\Big(}
-
[W_{\ell-2,p+4}]_{q,\infty}
\Big)
+ 1
\,}\\
\displaystyle{
\vphantom{\Big(}
=
t_0^{-1}s
[W_{\ell-2,p+2}]_{q,\infty}
\ast
\Big(
\big(t_0^{-1}s\big)^{a(\ell)-1}
[W_{1,p}]_{q,\infty}
\ast
[W_{\ell-1,p+2}]_{q,\infty}
-
t_0s^{-1}
[W_{\ell-2,p+4}]_{q,\infty}
\Big)
+ 1
\,.}
\end{array}
\end{equation}
where the first equality is given by the $T$-system \eqref{20190227:eq6} with $ k := \ell-3$ and $ p := p+4$, the second equality is given by \eqref{20190228:eq7} with $ \ell' := \ell-2 $ and $ p := p+2$ together with the fact that $a(k)+ a(k+1) = 1 $.
The last equality is given by \eqref{20190228:eq1} with $ \ell := \ell-1 $ and $ p := p+2$.
Comparing with the RHS of \eqref{20190227:eq9} we can conclude.\end{proof}

\begin{rem}
A version of a (classical) $T$-system in type $A$ is given in \cite[Sec. 2]{KNS}, and a closed formula for its solution $T^{(a)}_{m}(u)$ is given in terms of a certain determinant in  \cite[Theorem 6.2]{KNS}.
For quantum or toroidal $T$-systems we similarly conjecture that we can express the class of a Kirillov-Reshetikhin module  $[W_{\ell,p+2}]_{q,\infty}$ as a polynomial in the classes of the fundamental modules by computing a particular row-determinant. However its dependance on the quantum parameter is not clear yet.
\end{rem}


\section{Toroidal cluster algebra structure}\label{cinq}

In this section we see how toroidal Grothendieck rings provide examples of toroidal cluster algebras. 
The categories $\mathscr{C}_1$ in $ADE$-types discussed above are the first monoidal categories which were related
to cluster algebras in \cite{hl}. In the main result (Theorem \ref{thm:toroidaliso}) we establish that 
toroidal Grothendieck rings of these categories $\mathscr{C}_1$ are toroidal cluster algebras.

\subsection{Monoidal categorications}\label{cinqun}
We recall (see \cite{hl}) that a monoidal category $\mathscr{M}$ is said to be a monoidal categorification of a (classical) cluster algebra $ \mathcal{A} $
if there exists a ring isomorphism
$$\mathcal{A} \overset{\sim}{\longrightarrow} \mathscr{K}(\mathscr{M})
$$
so that cluster variables are certain classes of simple modules $L$ in the Grothendieck ring of $ \mathscr{M}$.
In particular, the cluster monomials of $\mathcal{A} $ (namely, monomials composed of cluster variables in the same seed) correspond to classes of real
(i.e. such that $ L \otimes L$ is still simple) simple objects of $\mathscr{K}(\mathscr{M})$. 

Various examples of monoidal categorification appeared as categories of finite-dimensional $U_q(\mathfrak{g})$-modules \cite{hl, hlad, hljems, bc}.
Other examples of monoidal categorification of cluster algebras can be given for instance by perverse sheaves on quiver varieties, representations of 
quiver-Hecke algebras, or equivariant perverse coherent sheaves on the affine Grassmannian (see \cite{Nak3, Q, kkko, CW} or \cite{H:bour} for a review)
In \cite{hl} Hernandez and Leclerc conjectured (Conjecture 4.6) that the category $\mathscr{C}_1$
is a monoidal categorification of a classical cluster algebra $\mathcal{A}$
with the same Dynkin type as $\overline{\mathfrak{g}}$, and they proved this conjecture
for $ \overline{\mathfrak{g}}$ of type $A_n$ and $D_4$.
In \cite{Nak3} the conjecture was later proved in type $ADE$, using the geometry of quiver varieties.
Again through a geometric approach \cite{Q} proved the conjecture in type ADE for more larger categories $\mathscr{C}_\ell$, $\ell\geq 1$, 
introduced by Hernandez-Leclerc.
In type $A_n$ and $D_n$ the same authors \cite{hlad} proved that the category $\mathscr{C}_1^{ob}$
also provides a monoidal categorification of cluster algebras of the same type.
The proof is similar to \cite{hl}, but the main calculations are in this case simpler.

For the category $\mathscr{C}_\mathcal{Q}$ it is proved in \cite[Theorem 6.1]{HL:qGro} that the $t$-deformed quantum Grothendieck ring $ \mathscr{K}_t(\mathscr{C}_{\mathcal{Q}})$
is isomorphic to $A_t(\mathfrak{n})$, the quantum coordinate ring of the unipotent group $N$ associated with the Lie subalgebra $\mathfrak{n}\subset \mathfrak{g}$, 
and it was proved in \cite{GLS} that the latter possesses a quantum cluster algebra structure (see also \cite{F1, F2} for very recent advances on the
structure of these categories). 

We study the possibility of producing examples of toroidal cluster algebras by using the toroidal Grothendieck rings $\mathscr{K}_{\infty}(\mathscr{C}')$ 
for $\mathscr{C}'$ one of the categories of the previous section. One of our main results is the proof for the categories $\mathscr{C}_1$. 
This gives an incarnation of a toroidal cluster algebra for each finite cluster type. 
Before proving the result, let us show an instance of this phenomenon for the categories $\mathscr{C}_{\mathcal{Q}}$ and $\mathscr{C}_1^{ob}$.

\subsection{First examples}
\begin{ex}\label{ex:quantumtoroidal1}
Let $ \overline{\mathfrak{g}} = sl_3 $ and $ \mathscr{C}' = \mathscr{C}_{\mathcal{Q}} $.
Let 
$$
\begin{array}{l l}
X_1 = [V_1(1)]_{q,\infty}\,,& X_2 = [V_2(q)]_{q,\infty}
\,\\
X_3 = [L(Y_{1,0}Y_{1,2})]_{q,\infty}\,, & X_1' = [V_1(q^2)]_{q,\infty}
\,.
\end{array}
$$
By comparing with Example \ref{ex:multidefquanumgroth}, it is easily seen that the toroidal cluster algebra structure of type $A_1$ with two parameters in the example of Section \ref{sec:afirstexample} is the structure obtained from $ \mathscr{K}_{\infty} (\mathscr{C}_{\mathcal{Q}})$ when we denote\footnote{Note that in the example of Section \ref{sec:afirstexample} the parameters are actually denoted by $ t_1$ and $ t_2$ respectively, following the notation introduced in that section.}
\begin{equation}\label{20190503:eq5}
t_{(1)} : = \prod_{a \in \ZZ} t_a^{\mathcal{N}_a(1,0,2,1)}\,,
\quad\text{and}\quad
t_{(2)} : = \prod_{a \in \ZZ} t_a^{-\mathcal{N}_a(1,0,1,2)-\mathcal{N}_a(1,0,2,1)}\,.
\end{equation}
In fact, in addition to the quasi-commutation relations \eqref{20181130:eq3b}, \eqref{20181130:eq9b} and \eqref{20181130:eq16b},
we can use the explicit expression of $[L(Y_{1,0}Y_{1,2})]_{q,\infty} $ at the end of Example \ref{ex:multidefquanumgroth}
and conclude that all quasi-commutation relations between the classes $ [V_1(1)]_{q,\infty}$, $ [V_2(q)]_{q,\infty} $, $ [L(Y_{1,0}Y_{1,2})]_{q,\infty}$ and $ [V_1(q^2)]_{q,\infty}$ coincide with the quasi-commutation relations between the toroidal cluster variables $X_1$, $X_2$, $X_3$ and $X_1'$, under the reparametrization \eqref{20190503:eq5}.
\end{ex}

\begin{ex}\label{ex:quantumtoroidal2}
Let $ \overline{\mathfrak{g}} = sl_3 $ and $ \mathscr{C}' = \mathscr{C}^{ob}_1 $.
Following \cite{hlad}, let
$$
\begin{array}{l l}
X_1 = \widetilde{[V_1(q^2)]}_{q,\infty} = Y_{1,2}
\,,&
X_2 = \widetilde{[V_2(q^3)]}_{q,\infty} = Y_{2,3}
\,,\\
X_3 = \widetilde{[L(Y_{1,0}Y_{1,2})]}_{q,\infty} = Y_{1,0}Y_{1,2}
\,,&
X_4 = \widetilde{[L(Y_{2,1}Y_{2,3})]}_{q,\infty} = Y_{2,1}Y_{2,3}
\,.
\end{array}
$$
If we let $ t_1 $ and $ t_2 $ be defined as in \eqref{20190503:eq5},
we obtain a toroidal cluster algebra structure on the toroidal Grothendieck ring $ \mathscr{K}_{\infty}(\mathscr{C}_1^{ob})$ with initial toroidal seed
$$
\mathcal{S} : = ( X_1, X_2, X_3, X_4, \widetilde{B}),
$$
with two coefficients ($X_3,X_4$), and with
$$
\widetilde{B}^T
=
\begin{pmatrix}
0 & -1 & -1 & 1 \\ 1 & 0 & 0 & -1
\end{pmatrix}
\,.
$$
The quasi-commutative structure on $ \mathcal{S} $ is given by
$$
X_i \ast X_j = {t_1}^{\Lambda_1(i,j)}{t_2}^{\Lambda_{2}(i,j)} X_j \ast  X_i
$$
where
$$
\Lambda_1
=
\begin{pmatrix}
0 & 1 & 1 & 0 \\ -1 & 0 & 1 & 1 \\ -1 & -1 & 0 & -1 \\ 0 & -1 & 1 & 0
\end{pmatrix}
\,,
\quad
\Lambda_{2}
=
\begin{pmatrix}
0 & 0 & 1 & 0 \\ 0 & 0 & 1 & 1 \\ -1 & -1 & 0 & -1 \\ 0 & -1 & 1 & 0
\end{pmatrix}
\,.
$$
Since $ \widetilde{B}^T\Lambda_1 = \begin{pmatrix} 2 \, \text{Id}_2 \mid 0 \end{pmatrix} $ and $ \widetilde{B}^T\Lambda_{2} = \begin{pmatrix} \text{Id}_2 \mid 0 \end{pmatrix} $,
both pairs $ (\widetilde{B}, \Lambda_1) $ and $ (\widetilde{B}, \Lambda_2) $ are compatible.
By Proposition \ref{prop:mutationtoroidalseed} all mutated pairs $(\mu_k(\widetilde{B}), \mu_k(\Lambda_a))$ ($ a = 1,2$) are still compatible.

We can mutate the initial toroidal seed $ \mathcal{S}$ in direction $ 1 $ or $ 2 $, corresponding to the exchangeable variables.
The result of all possible iterated mutations is encoded in the exchange graph below.
At each step, the matrix $ \widetilde{B}$ mutates according to the usual rule \eqref{20190801:eq2}, and the quasi-commutation matrices  $\Lambda_1\,, \Lambda_2$ mutate according to \eqref{20190801:eq1}.
\begin{equation*}
\begin{tikzcd}[row sep=3em]
& \mathcal{S} \arrow[ld, swap, "X_1 \ast X_1'  = X_4 +t_1t_2^{\frac12} X_2X_3 "] \arrow[rd, "X_2 \ast X_2^{\bullet} = t_1^{-\frac12}X_1 +(t_1t_2)^{\frac12} X_4
"] & \\
\mathcal{S}_1 = (X_1', X_2,X_3,X_4, \mu_1(\widetilde{B}))  \arrow[d, swap, "X_2\ast X_2' = 1 + t_1(t_2)^{\frac12}X_1' "]
&& \mathcal{S}_2 = (X_1, X_2^{\bullet},X_3,X_4, \mu_2(\widetilde{B})) \arrow[d, "X_1\ast X_1^{\bullet} = t_1^{-\frac12}X_2^{\bullet} +(t_1t_2)^{\frac12} X_3 "]\\
\mathcal{S}_{1,2} = (X_1',X_2',X_3,X_4,\mu_2(\mu_1(\widetilde{B}))) \arrow[dr, swap, "X_1' \ast X_1'' = X_3 + t_1t_2^{\frac12} X_2'X_4 "] &&  \mathcal{S}_{2,1} = (X_1^{\bullet}, X_2^{\bullet}, X_3, X_4, \mu_1(\mu_2(\widetilde{B}))) \arrow[dl, equal] \\
& \mathcal{S}_{1,2,1} & 
\end{tikzcd}\,,
\end{equation*}
where the last toroidal seed is $\mathcal{S}_{1,2,1}  = (X_1'',X_2',X_3,X_4,\mu_1(\mu_2(\mu_1(\widetilde{B})))) $, which we will see coincides with $ \mathcal{S}_{2,1}$. The toroidal exchange graph is therefore finite (see Theorem \ref{thm:exchgraph}).
In fact, by computing the corresponding quasi-commutation relations inside the toroidal Grothendieck ring $\mathscr{K}_{\infty}(\mathscr{C}^{ob}_1)$ we obtain the identification
$$
\begin{array}{c}
X_1' =  \widetilde{[L(Y_{1,0}Y_{2,3})]}_{q,\infty} = Y_{1,0}Y_{2,3} + Y_{1,2}^{-1}Y_{2,1}Y_{2,3}
\,,\,\,\,
X_2^{\bullet} = X_1'' = \widetilde{[V_2(q)]}_{q,\infty} = Y_{2,1} + Y_{1,2}Y_{2,3}^{-1}
\,,\\
X_1^{\bullet} = X_2' \, = \widetilde{[V_1(1)]}_{q,\infty} = Y_{1,0} + Y_{1,2}^{-1}Y_{2,1} + Y_{2,3}^{-1}
\end{array}
$$
Thus, by swapping $X_2^{\bullet} $ and $X_1^{\bullet} $ we can identify the seeds $ \mathcal{S}_{1,2,1} $ and $\mathcal{S}_{2,1}$, while their corresponding exchange matrices and quasi-commutation matrices
coincide up to a reordering of the first two rows and columns.
We obtain a structure of toroidal cluster algebra of type $A_2$ with two quantum parameters.

By combining the exchange relations between the seeds $ \mathcal{S}_1 \mapsto \mathcal{S}_{1,2} $ and $ \mathcal{S} \mapsto \mathcal{S}_1 $, we can write $ X_1^{\bullet} $ as
$$
X_1^{\bullet} = 
X_2^{-1}
+
X_1^{-1}
X_2^{-1}
X_4
+
X_1^{-1}
X_3
\,,
$$
which is an instance of the Laurent phenomenon for toroidal cluster algebras.
\end{ex}

\subsection{Grothendieck rings as cluster algebras for the category $ \mathscr{C}_1$}
Let $ \xi: I \longrightarrow \{0,1\} $ be a height function associated to a bipartite quiver of Dynkin type ADE.
We can attach to $ \overline{\mathfrak{g}}$ a finite quiver $ \mathcal{Q}_1$ whose vertex set is given by
$$ V_{\xi,1} := \{(i,r) \in \hat{I} \mid r = \xi_i-1, \xi_i+1\},$$
and arrows
$$ ((i,r) \rightarrow (j,s)) \leftrightarrow (C_{i,j} \neq  0 \text{ and } s = r + C_{i,j}).$$

This amount to assuming that $ \xi_i = 1 $ whenever $ i $ is a source, the vertices of the Dynkin diagram corresponding to the vertices $(i, \xi_i+1)$ in $ V_{\xi,1}$.
This quiver corresponds to (a shifted version of) the full subquiver described in \cite{hljems}, which is built from one of the two isomorphic connected components of the infinite quiver $\widetilde{\mathcal{Q}}$.
Let $\mathbf{z}_1:= \{z_{(i,r)} \mid (i,r) \in V_{\xi, 1} \} $ be a set of commuting variables, and let $ \mathcal{A}(\mathcal{Q}_1) \subset \mathbb{Q}(\mathbf{z}_1)$ be the classical cluster algebra with initial seed $(\mathbf{z}_1, \mathcal{Q}_1)$,
where the variables $ z_{(i,r)} $ with $ r = \xi_i-1$ are considered as coefficients.
For $ (i,r) \in V_{\xi,1}$, let $ m_{i,r} := \text{max}\{ k \mid r + 2k \leq 2\} +1 $.
On the other hand, recall that the simple modules in $ \mathscr{C}_1$ are indexed by dominant commutative monomials in the variables $Y_{i,p}$, where $ i \in I $ and $ p \in \{\xi_i, \xi_i+2\}$. Note that in this section we will thus often use the notation $ L(Y_{i,r}) $ for the fundamental module $ V_i(q^r)$.
Then
 
\begin{thm}\cite{hljems}\label{20190801:thm1}
The assignment
\begin{equation}\label{20190805:eq1}
z_{(i,r)} \mapsto \chi_q\Big(W^{(i)}_{m_{i,r},r+1}\Big) \,,
\end{equation}
extends to a ring isomorphism $ i_1: \mathcal{A}(\mathcal{Q}_1) \overset{\sim}{\longrightarrow} \mathscr{K}(\mathscr{C}_1)$.
\end{thm}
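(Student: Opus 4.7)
The plan is to establish the isomorphism in three standard steps: (a) check that the initial cluster variables map to algebraically independent classes of simple modules in $\mathscr{C}_1$, (b) verify that one-step mutations from the initial seed produce classes of other simple modules via $T$-system identities among KR classes, and (c) deduce surjectivity and injectivity using that the fundamental classes generate $\mathscr{K}(\mathscr{C}_1)$ and are algebraically independent.

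For (a), each $W^{(i)}_{m_{i,r},r+1}$ is a simple KR module in $\mathscr{C}_1$, and the corresponding dominant monomials are pairwise distinct, one per vertex of $V_{\xi,1}$. Since $\mathscr{K}(\mathscr{C})$ is a polynomial algebra in the classes of fundamental modules (Frenkel--Reshetikhin) and each of these KR classes has a distinct highest monomial, the family $\{[W^{(i)}_{m_{i,r},r+1}]\}_{(i,r)\in V_{\xi,1}}$ is algebraically independent in $\mathscr{K}(\mathscr{C}_1)$. This provides a well-defined injective ring homomorphism from the localization $\mathbb{Z}[z_{(i,\xi_i+1)}]\otimes \mathbb{Z}[z_{(i,\xi_i-1)}^{\pm 1}]$ (initial cluster with frozen coefficients inverted) into the fraction field of $\mathscr{K}(\mathscr{C}_1)$.

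For (b), the mutation relation at an exchangeable vertex $(i,\xi_i+1)$ reads
$$z_{(i,\xi_i+1)}\, z'_{(i,\xi_i+1)} \;=\; \prod_{(j,s)\to(i,\xi_i+1)} z_{(j,s)} \;+\; \prod_{(i,\xi_i+1)\to (j,s)} z_{(j,s)}.$$
Under the assignment \eqref{20190805:eq1} this should translate into an instance of the $T$-system for KR modules,
$$[W^{(i)}_{k,p}]\cdot [W^{(i)}_{k,p+2}] \;=\; [W^{(i)}_{k+1,p}]\cdot [W^{(i)}_{k-1,p+2}] \;+\; \prod_{j\sim i} [W^{(j)}_{\bullet,\bullet}],$$
which is known to hold in $\mathscr{K}(\mathscr{C}_1)$ (Nakajima, and the second author). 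The arrows of $\mathcal{Q}_1$ were defined by $(i,r)\to (j,s)$ iff $C_{ij}\neq 0$ and $s = r + C_{ij}$ precisely so that this bookkeeping matches: the two products on the right of the mutation relation become the two terms on the right of the $T$-system. This shows the initial map extends consistently to a ring homomorphism $i_1: \mathcal{A}(\mathcal{Q}_1) \to \mathscr{K}(\mathscr{C}_1)$.

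For (c), one shows by induction on iterated mutations (using the $T$-system again at each step) that every fundamental class $[V_i(q^{\xi_i})] = [W^{(i)}_{1,\xi_i}]$ eventually appears as a cluster variable, hence lies in the image of $i_1$; surjectivity then follows from the Frenkel--Reshetikhin generation statement. Injectivity is obtained by a transcendence-degree count: both $\mathcal{A}(\mathcal{Q}_1)$ and $\mathscr{K}(\mathscr{C}_1)$ are integral domains of Krull dimension $|V_{\xi,1}|$, so a surjection sending algebraically independent generators to algebraically independent elements is an isomorphism. The main obstacle is the verification in step (b): one must check in type-by-type fashion, or uniformly using the explicit form of the KR $q$-characters on the bipartite quiver $\mathcal{Q}_1$, that the $T$-system exchange matches the combinatorial mutation rule of $\mathcal{Q}_1$ at every exchangeable vertex, and then argue by induction that subsequent mutations remain classes of (real) simple modules rather than arbitrary rational functions.
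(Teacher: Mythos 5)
This statement is not proved in the paper at all: it is imported verbatim from \cite{hljems} (with the underlying categorification result going back to \cite{hl} for types $A_n$ and $D_4$ and to \cite{Nak3, Q} in general $ADE$ type), so there is no in-paper argument to compare yours against. Judged on its own merits, your outline reproduces the standard skeleton but leaves the two essential points unresolved. First, in step (b) it is not true that every one-step mutation at an exchangeable vertex of $\mathcal{Q}_1$ realizes a $T$-system: this holds at the vertices $(k,\xi_k+1)$ with $\xi_k=1$, but when $\xi_k=0$ the first mutation produces the class of a minimal affinization $L(Y_{k,0}\prod_{j\sim k}Y_{j,3})$, and the $T$-system for $W^{(k)}_{2,\xi_k}$ is only recovered after the composite sequence of mutations at all vertices with $\xi=1$ (this is exactly the point of Remark \ref{20190805:rem1} and of the corresponding discussion in the proof of Theorem \ref{thm:toroidaliso}). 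The corrected bookkeeping still yields the inclusion $\mathscr{K}(\mathscr{C}_1)\subseteq i_1(\mathcal{A}(\mathcal{Q}_1))$, since every fundamental class is reached by an explicit finite mutation sequence.

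The genuine gap is the reverse containment $i_1(\mathcal{A}(\mathcal{Q}_1))\subseteq \mathscr{K}(\mathscr{C}_1)$, i.e.\ that every cluster variable is sent to a polynomial in the fundamental classes (in fact to the class of a real simple module) rather than to a genuine rational function. This is the actual content of the theorem, and your proposal only names it as ``the main obstacle'' without an argument. Your transcendence-degree count does not substitute for it: once the initial cluster variables go to algebraically independent elements, $i_1$ is the restriction of a field embedding, so injectivity is automatic and carries no information; what must be shown is that the image of the subring $\mathcal{A}(\mathcal{Q}_1)$ lands inside the polynomial ring $\mathscr{K}(\mathscr{C}_1)$ at all. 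This is precisely where the hard input enters in the literature: explicit finite-type computations in \cite{hl}, quiver-variety geometry in \cite{Nak3}, or triangular bases in \cite{Q}; alternatively one can try to use the coincidence of a finite-type acyclic cluster algebra with its upper bound \cite{bfz} and identify $\mathscr{K}(\mathscr{C}_1)$ with the relevant intersection of Laurent polynomial rings, but some such argument has to be supplied before the proof is complete.
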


We recall that $W^{(i)}_{m_{i,r},r+1} = L(m^{(i)}_{m_{i,r},r+1}) $. 
It follows from the definition of the vertex set $ V_{\xi,1} $ that $m_{i,\xi_i +1} = 1$ and $m_{i,\xi_i -1} = 2$. Thus, \eqref{20190805:eq1} reads
\begin{equation*}
\begin{array}{c}
z_{(i,\xi_i +1)} \mapsto  \chi_q\Big(W^{(i)}_{1,\xi_i+2}\Big) = \chi_q\Big(L(Y_{i, \xi_i+2})\Big)
\,,\quad
z_{(i,\xi_i -1)} \mapsto  \chi_q\Big(W^{(i)}_{2,\xi_i}\Big) = \chi_q\Big(L(Y_{i, \xi_i}Y_{i,\xi_i+2})\Big).
\end{array}
\end{equation*}

\begin{rem}
In \cite{hl}, the cluster algebra was constructed starting from a different initial seed attached to the quiver $\mathcal{Q}'$ (see \cite[Example 4.1]{hl} for instance).
It is obtained from a copy of the Dynkin diagram of $\overline{\mathfrak{g}}$ with a bipartite orientation, associated to the same height function $ \xi: I \longrightarrow \{0,1\} $, but with arrows $ i \rightarrow j $ whenever $ C_{i,j} = -1 $ and $\xi_i = 0 $.
To each vertex $ i \in I $ we then attach a new vertex $ i' $ corresponding to a coefficient, together with an arrow $ i' \rightarrow i $  (resp. $ i \rightarrow i' $ ) if $ \xi_i = 0 $ (resp. $ \xi_i = 1 $). The quivers $ \mathcal{Q}_1$ and $\mathcal{Q}'$ are mutation equivalent, as $ \mathcal{Q}_1$ can be obtained from $ \mathcal{Q}' $ by performing the sequence of (commuting) mutations along all vertices $ i \in I $ such that $ \xi_i = 1 $.

For example, when $\overline{\mathfrak{g}} $ is of type $ A_3 $, with height function $ \xi_1 = \xi_3 = 0 $, $ \xi_2 = 1 $, the initial seed constructed in \cite{hl} can be depicted
as
\[
\xymatrix@R=2em@C=2em{
\chi_q(V_1(q^2)) \ar[r] & \chi_q(V_2(q))\ar[d] & \chi_q(V_3(q^2)) \ar[l] \\
\chi_q(L(Y_{1,0}Y_{1,2})) \ar[u] & \chi_q(L(Y_{2,1}Y_{2,3})) & \chi_q(L(Y_{3,0}Y_{3,2}))\ar[u]
}
\]
By mutating in direction $ 2$ we obtain a new cluster variable with exchange relation
$$ \chi_q(V_2(q)) \chi_q(V_2(q))' = \chi_q(V_1(q^2))\chi_q(V_3(q^2)) + \chi_q(L(Y_{2,1}Y_{2,3}))\,,$$
and direct computation shows that $ \chi_q(V_2(q))'  =  \chi_q(V_2(q^3)) $.
Thus, the mutated quiver is
\[
\xymatrix@R=2em@C=2em{
\chi_q(V_1(q^2)) \ar[dr] & \chi_q(V_2(q^3))\ar[l]\ar[r] & \chi_q(V_3(q^2)) \ar[dl] \\
\chi_q(L(Y_{1,0}Y_{1,2})) \ar[u] & \chi_q(L(Y_{2,1}Y_{2,3}))\ar[u] & \chi_q(L(Y_{3,0}Y_{3,2}))\ar[u]
}
\]
which coincides with the initial seed obtained through the quiver $ \mathcal{Q}_1$ and the assignment \eqref{20190805:eq1}.  \end{rem}

We study the toroidal Grothendieck ring $ \mathscr{K}_{\infty}(\mathscr{C}_1) $ in the same spirit.
\begin{rem}
In the quantum case, a similar approach is followed by Bittmann \cite{b}, who on the other hand used quantum cluster algebras in order to construct quantum Grothendieck rings of a larger category $\mathcal{O}$ of representations of the quantum Borel subalgebra $ U_q(\mathfrak{b})$.
\end{rem}

\subsection{A toroidal equivalent}

\subsubsection{The toroidal Grothendieck ring $ \mathscr{K}_{\infty}(\mathscr{C}_1) $}\label{sec:6.2.1}
By \cite[Example 6.4]{hl}, the $q$-truncated character of the fundamental modules $ L(Y_{i,\xi_i+2}) $ and of the Kirillov-Reshetikhin modules $ L(Y_{i,\xi_i}Y_{i,\xi_i+2})$ only consists of one dominant monomial, namely
$$ \widetilde{\chi_q(L(Y_{i,\xi_i+2}))} = Y_{i,\xi_i+2}\,,\quad \widetilde{\chi_q(L(Y_{i,\xi_i}Y_{i,\xi_i+2}))} = Y_{i,\xi_i}Y_{i,\xi_i+2}.$$
Moreover, the truncated $ q $-characters for the fundamental modules $ L(Y_{i,\xi_i}) $ are multiplicity-free.
Therefore, following Section \ref{truncinf}, we define the truncated $(q,\infty)$-characters to coincide with the truncated $q$-characters in \cite[Example 6.4]{hl}:
$$ \widetilde{[L(Y_{i,\xi_i+2})]}_{q,\infty} := Y_{i,\xi_i+2} \,,\quad \widetilde{[L(Y_{i,\xi_i}Y_{i,\xi_i+2})]}_{q,\infty} := Y_{i,\xi_i}Y_{i,\xi_i+2}\,,$$
and
$$
\widetilde{[L(Y_{i,\xi_i})]}_{q,\infty} := \begin{cases}
Y_{i,0}(1 + A_{i,1}^{-1}\prod_{j\sim i}(1 + A_{j,2}^{-1}))\,, & \text{if } \xi_i = 0 \\
Y_{i,1}(1+A_{i,2}^{-1})\,, & \text{if } \xi_i = 1 \end{cases}\,.
$$
As the only variables appearing in the expressions above are $ Y_{i,\xi_{i}}^{\pm 1},\, Y_{i,\xi_i+2}^{\pm 1}$, we can deduce that the relations between the truncated $(q,\infty)$-characters only involve three kind of exponents: $\mathcal{N}_s(i,0;k,2)$ where $ \xi_{i} = \xi_{k}$, $\mathcal{N}_s(i,0;j,1)$ and $\mathcal{N}_s(i,0;j,3)$ where $ \xi_{i} = \xi_{j}\pm 1 $ ($ s \in \mathbb{Z}$).

By Remark \ref{20190807:rem1}, all these exponents vanishes for $ s \leq -4$. This implies that the parameters $t_s$, with $s\leq -4$, do not occur in the relations 
describing the quantum toroidal ring of $\mathscr{C}_1$. Moreover, thanks to the explicit form of the relations $\mathscr{R}_k$ in Definition \ref{quotientorus}, this implies that the 
specializations 
\begin{equation}\label{20210111:eq2}
t_{a}^{\frac12}\mapsto 1 \text{ for all } a > 0\,,
\end{equation}
are well-defined (under these specializations, the relations just imply that $t_{-2s} = t_{-2}$ for $s\geq 2$, but the parameters $t_{-2s}$ do not occur).
Note that in general, by performing these specializations we lose some information (see Example \ref{20210111:ex1}), but we still 
have an interesting toroidal structure with two independent deformation parameters $t_0$, $t_{-2}$. In the rest of this Section, we assume these specializations.

\begin{ex}\label{20210111:ex1}
Let $\overline{\mathfrak{g}}$ be of type $A_4$, and consider the height function $ \xi : \{1,2,3,4\} \longrightarrow \{0,1\}$ given by $ \xi_1 = \xi_3 = 1 $ and $ \xi_2 = \xi_4 = 0$. Then the subring of $\widetilde{\mathscr{Y}}_{\infty}$ of the truncated $(q,\infty)$-characters is generated by
$$ Y_{1,1}^{\pm 1},\,Y_{1,3}^{\pm 1},\,Y_{2,0}^{\pm 1},\,Y_{2,2}^{\pm 1},\,Y_{3,1}^{\pm 1},\,Y_{3,3}^{\pm 1},\,Y_{4,0}^{\pm 1},\,Y_{4,2}^{\pm 1}.$$ 
Let us consider the fundamental modules $W^{(1)}_{1,3}$ and $W^{(4)}_{1,3}$, whose truncated $ (q,\infty)$-characters are given by $\widetilde{[W^{(1)}_{1,3}]}_{q,\infty} = Y_{1,3} $ and $\widetilde{[W^{(4)}_{1,3}]}_{q,\infty} = Y_{4,3} $.
Following \eqref{20190409:eq1} we have
$$ Y_{1,3} * Y_{4,2} = \big( \prod_{k\geq 0} t_{2+10k}^{-1}t_{4+10k}^3t_{6+10k}^{-3}t_{8+10k}\big) Y_{4,2} * Y_{1,3}\,.$$
As the first quantum parameter which appears in this quasi-commutation relation is $t_2$, the classes of the fundamental modules $W^{(1)}_{1,3} $ and $ W^{(4)}_{1,3}$ commute with each other under the specialization \eqref{20210111:eq2}, although they do not commute in $ \widetilde{\mathscr{Y}}_\infty$.
\end{ex}

With abuse of notation, we keep the same notation $\widetilde{\mathscr{Y}}_{\infty}$ (the quantum torus after specialization) and $ \mathscr{K}_{\infty}(\mathscr{C}_1)$ (the ring generated by the truncated $(q,\infty)$-characters, see Section \ref{truncinf}), for simplicity.
Thus we have (still denoting the product by $\ast$)
\begin{equation}\label{20190917:eq1}
Y_{i,p} \ast Y_{j,s} = t_{-2}^{\mathcal{N}_{-2}(i,p;j,s)}t_{0}^{\mathcal{N}_{0}(i,p;j,s)} Y_{j,s}\ast Y_{i,p}\,.
\end{equation}

Let $\Lambda_a $, $ a = -2,0$, be the $ 2n \times 2n $-matrix whose rows and columns are indexed by the set $ V_{\xi,1}$ and whose $((i,p),(j,s))$-th entry $\Lambda_a\big((i,p),(j,s)\big) $ is given by the power of $t_a$ in the $\ast$-product between the monomials $ m^{(i)}_{m_{i,p},p+1} $ and $ m^{(j)}_{m_{j,s},s+1}$ in $ \widetilde{\mathscr{Y}}_{\infty} $. By \eqref{20190917:eq1} we have:
\begin{equation}\label{20190723:eq2}
\begin{array}{l}
\displaystyle{
\vphantom{\Big(}
\Lambda_a\big((i,\xi_i+1),(j,\xi_j+1)\big) = \mathcal{N}_a(i,0;j,\xi_j-\xi_i)\,,
}\\
\displaystyle{
\vphantom{\Big(}
\Lambda_a\big((i,\xi_i+1),(j,\xi_j-1)\big) =  \mathcal{N}_a(i,0;j,\xi_j-\xi_i-2) +  \mathcal{N}_a(i,0;j,\xi_j-\xi_i)\,,
}\\
\displaystyle{
\vphantom{\Big(}
\Lambda_a\big((i,\xi_i-1),(j,\xi_j-1)\big) = 2 \mathcal{N}_a(i,0;j,\xi_j-\xi_i) +  \mathcal{N}_a(i,0;j,\xi_j-\xi_i-2) +  \mathcal{N}_a(i,0;j,\xi_j-\xi_i+2)\,.
}\end{array}
\end{equation}
\begin{rem}
We chose the following ordering on the set $ V_{\xi,1}$:
$$ (1,\xi_1+1) < (2, \xi_2+1) < \ldots < (n,\xi_n+1) < (1, \xi_1-1) < (2, \xi_2-1) < \ldots < (n, \xi_n-1)\,.$$
\end{rem}

\subsubsection{The toroidal cluster algebra $\mathcal{A}_{tor}(\mathcal{Q}_1)$}
We want to construct a toroidal cluster algebra structure with 2 deformation parameters $t_{-2}$, $t_0$ whose specialization $ t_a \mapsto 1$ ($a= -2,0$) recovers the cluster algebra $\mathcal{A}(\mathcal{Q}_1)$.
We consider the same set of variables $\mathbf{z}_1$ as before, but we impose the quasi-commutation relations given by the matrices $\Lambda_{-2}$, $\Lambda_0$ above. We claim that the resulting quantum torus is compatible with the cluster algebra structure encoded in the quiver $\mathcal{Q}_1$, thus providing a toroidal cluster algebra that we denote by $\mathcal{A}_{tor}(\mathcal{Q}_1)$.

Indeed, attached to the quiver $\mathcal{Q}_1 $ there is a $ 2n \times n$-matrix $ \widetilde{B}$, with rows indexed by the set $ V_{\xi,1}$ and columns indexed by its subset $ \overline{V}_{\xi,1} = \{ (i, \xi_i +1) \mid i \in I \}$.
It can be described explicitly as follows:
\begin{equation}\label{20190723:eq1}
\widetilde{B}_{(i,r),(j,s)} = \begin{cases}
1\,, & \text{if } i = j, s = r+2, \text{ or } i \sim j, s = r-1\\
-1\,, & \text{if } i \sim j, s = r+1.
\end{cases}
\end{equation}

We have
\begin{prop}
The pairs $(\widetilde{B}, \Lambda_0) $ and $(\widetilde{B}, \Lambda_{-2}) $ are compatible pairs.
\end{prop}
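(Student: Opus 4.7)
The goal is to show $\widetilde{B}^{T}\Lambda_{a} = (k_a\,\mathrm{Id}_n \mid 0)$ for $a \in \{-2,0\}$ and suitable nonzero constants $k_a \in \mathbb{Z}$, where the left block corresponds to the exchangeable positions $(j,\xi_j+1)$ and the right block to the frozen ones $(j,\xi_j-1)$. My plan is to reduce the claim to a finite sum of shifted entries of the inverse quantized Cartan matrix and then collapse this sum via the fundamental recurrence \eqref{20190801:eq4}.

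The case $a=0$ can be disposed of by invoking the already-established one-parameter quantum compatibility. Indeed, by Remark \ref{20190726:rem1} the exponent $\mathcal{N}_0$ coincides (up to the sign convention) with the exponent $\mathcal{N}$ governing the single-parameter quantum torus of \cite{H1}, so $\Lambda_0$ is precisely the quasi-commutation matrix used in the quantum cluster algebra realizations of $\mathscr{K}_t(\mathscr{C}_1)$ in \cite{hljems, b}, where compatibility with $\widetilde{B}$ is already proven. Thus the only new content is the case $a=-2$.

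For $a=-2$, I would compute the entries of $\widetilde{B}^{T}\Lambda_{-2}$ directly. In the column of $\widetilde{B}$ labelled by $(i,\xi_i+1)$, the nonzero rows are, by \eqref{20190723:eq1}: the row $(i,\xi_i-1)$ with entry $-1$; the rows $(\ell,\xi_i)$ with entry $+1$ for each $\ell \sim i$; and, only when $\xi_i = 0$, also the rows $(\ell,\xi_i+2)$ with entry $-1$. Substituting the expressions \eqref{20190723:eq2} for the relevant $\Lambda_{-2}$ entries and then \eqref{20190801:eq3} for $\mathcal{N}_{-2}$, every entry of $\widetilde{B}^{T}\Lambda_{-2}$ becomes an alternating $\mathbb{Z}$-linear combination of values $\widetilde{C}_{k,j}(m)$ with $k \in \{i\} \cup \{\ell : \ell \sim i\}$ and shifts $m$ of bounded size. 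The key collapse is then supplied by \eqref{20190801:eq4}: the identity $\sum_{\ell \sim i}\widetilde{C}_{\ell,j}(m) = \widetilde{C}_{i,j}(m+1)+\widetilde{C}_{i,j}(m-1)$ converts each sum over neighbors into a pair of shifted values of $\widetilde{C}_{i,j}$, after which the four-term alternating sums cancel in pairs for all off-diagonal and frozen entries. What remains on the diagonal is a small residue governed by the boundary values $\widetilde{C}_{i,j}(1) = \delta_{i,j}$ and $\widetilde{C}_{i,j}(m) = 0$ for $m \leq 0$ (the latter coming from the power-series definition \eqref{20190726:eq1}), yielding the constant $k_{-2}$ on the diagonal.

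The main obstacle is the asymmetry $\xi_i \in \{0,1\}$: the extra $(\ell,\xi_i+2)$ contribution appears only at sink nodes, and so the computation formally bifurcates. Both branches however collapse through the same Cartan recurrence, so the bifurcation is purely bookkeeping. As a sanity check I would first verify the rank-one $sl_2$ case, where $\widetilde{B}$ is a single column, $\xi_i = 0$ is the only option, and a short calculation using the periodic values of $\widetilde{C}_{1,1}$ gives $k_0 = -2$ and $k_{-2} = 1$; the general $ADE$ case then runs in parallel, with the Cartan recurrence performing the same cancellation node by node.
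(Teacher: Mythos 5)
Your overall strategy is the same as the paper's: reduce each entry of $\widetilde{B}^{T}\Lambda_a$ to an alternating sum of values $\widetilde{C}_{k,j}(m)$ via \eqref{20190723:eq2} and \eqref{20190801:eq3}, then collapse using the recurrence \eqref{20190801:eq4} and the boundary conditions $\widetilde{C}_{i,j}(1)=\delta_{ij}$, $\widetilde{C}_{i,j}(m)=0$ for $m\leq0$. The one genuine divergence is that you dispatch $a=0$ by invoking compatibility from the one-parameter quantum setting (\cite{hljems, b}), which is legitimate, whereas the paper treats $a=0$ and $a=-2$ uniformly in a single computation and thereby gets a self-contained proof; your shortcut is fine but saves only a constant amount of work since the generic computation must be done for $a=-2$ regardless.

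Two points to fix. First, your reading of the nonzero entries in the $(i,\xi_i+1)$-column of $\widetilde{B}$ has all signs reversed: from \eqref{20190723:eq1} the row $(i,\xi_i-1)$ carries $+1$ (not $-1$, since $s=t+2$), the rows $(\ell,\xi_i)$ with $\ell\sim i$ carry $-1$ (since $s=t+1$), and the rows $(\ell,\xi_i+2)$, present only when $\xi_i=0$, carry $+1$ (since $s=t-1$). A consistent computation with your stated signs would give $k_0=+2$ and $k_{-2}=-1$, in conflict with the values you then quote; so either resolve the signs or flag the compensating convention. Second, for the right (frozen) block, rather than relying on ad hoc pairwise cancellation, the paper observes via \eqref{20190801:eq6} that $\big(\widetilde{B}^{T}\Lambda_a\big)_{(i,\xi_i+1),(j,\xi_j-1)}$ equals the antisymmetrization $\big(\widetilde{B}^{T}\Lambda_a\big)_{(i,\xi_i+1),(j,\xi_j+1)} - \big(\widetilde{B}^{T}\Lambda_a\big)_{(j,\xi_j+1),(i,\xi_i+1)}$, which vanishes once the left block is shown to be a multiple of the identity; this structural identity is worth incorporating, as it replaces a brittle term-by-term cancellation with a symmetry argument.
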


\begin{proof}
Let us consider a generic entry of the product $ \widetilde{B}^{T}\Lambda_a$, ($ a = -2,0$). For $ (i,r) \in \overline{V}_{\xi,1}$, $ (j,s) \in V_{\xi,1}$ we have
$$
\begin{array}{l}
\displaystyle{
\vphantom{\Big(}
\Big(\widetilde{B}^{T}\Lambda_a\Big)_{(i,r),(j,s)} = \sum_{(k,t) \in V_{\xi,1}} \widetilde{B}_{(k,t),(i,r)}\Lambda_a\big((k,t),(j,s)\big) }\\
\displaystyle{
\vphantom{\Big(}
=
\Lambda_a\big((i,r-2),(j,s)\big) + \sum_{k \sim i} \Big( \Lambda_a\big((k,r+1),(j,s)\big) - \Lambda_a\big((k,r-1),(j,s)\big)\Big)
\,,}\end{array}
$$
where the last equation follows from the expression \eqref{20190723:eq1} for the entries of $ \widetilde{B} $.
We shall now analyze separately the cases $ (j,s) = (j, \xi_j+1) $ (the left block in $\widetilde{B}^{T}\Lambda_a$) and $ (j,s) = (j, \xi_j-1) $ (the right block).
By \eqref{20190723:eq2}, we obtain the following expression for the left block:
\begin{equation}\label{20190723:eq3}
\begin{array}{l}
\displaystyle{
\vphantom{\Big(}
\Big(\widetilde{B}^{T}\Lambda_a\Big)_{(i,\xi_i+1),(j,\xi_j+1)}
}\\
\displaystyle{
\vphantom{\Big(}
= \Lambda_a\big((i,\xi_i-1),(j,\xi_j+1)\big) + \sum_{k \sim i} \Big( \Lambda_a\big((k,\xi_i+2),(j,\xi_j+1)\big)
 - \Lambda_a\big((k,\xi_i),(j,\xi_j+1)\big)\Big)
}\\
\displaystyle{
\vphantom{\Big(}
= - \mathcal{N}_a(i,0,j,\xi_i-\xi_j-2) - \mathcal{N}_a(i,0,j,\xi_i-\xi_j) - \sum_{k\sim i}\mathcal{N}_a(k,0,j,\xi_j-\xi_i+1)
\,,}\end{array}
\end{equation}
The last equality is obtained by observing that for $ k \sim i $ we have
$$
\begin{array}{l}
\Lambda_a\big((k,\xi_i+2),(j,\xi_j+1)\big) - \Lambda_a\big((k,\xi_i),(j,\xi_j+1)\big)
=
\delta_{\xi_i,0} \Big(  \Lambda_a\big((k,\xi_k+1),(j,\xi_j+1)\big)
\\
- \Lambda_a\big((k,\xi_k-1),(j,\xi_j+1)\big) \Big) - \delta_{\xi_i,1} \Lambda_a\big((k,\xi_k+1),(j,\xi_j+1)\big)
\,,
\end{array}
$$
and then by using \eqref{20190723:eq2} and the skew-symmetry of $\mathcal{N}_a(i,p;j,s)$.

By \eqref{20190801:eq3}, equation \eqref{20190723:eq3} can be rewritten as
\begin{equation}\label{20190801:eq5}
\begin{array}{l}
\displaystyle{
\vphantom{\Big(}
\Big(\widetilde{B}^{T}\Lambda_a\Big)_{(i,\xi_i+1),(j,\xi_j+1)} =
\widetilde{C}_{i,j}(\xi_i-\xi_j+a-3)+\widetilde{C}_{i,j}(-\xi_i+\xi_j+a+3)
}\\
\displaystyle{
\vphantom{\Big(}-\widetilde{C}_{i,j}(-\xi_i+\xi_j+a-1)-\widetilde{C}_{i,j}(\xi_i-\xi_j+a+1)
- \sum_{k\sim i}\Big(\widetilde{C}_{k,j}(-\xi_j+\xi_i+a-2)
}\\
\displaystyle{
\vphantom{\Big(}
-\widetilde{C}_{k,j}(\xi_j-\xi_i+a)-\widetilde{C}_{k,j}(-\xi_j+\xi_i+a)+\widetilde{C}_{k,j}(\xi_j-\xi_i+a+2)
\Big)
\,,}\end{array}
\end{equation}
Since $ \mid \xi_i - \xi_j \mid \leq 1 $, and $ \widetilde{C}_{i,j}(m) = 0 $ whenever $ m \leq 0 $ ($1 \leq i,j\leq n$), for $ a = 0 $ we obtain
$$
\begin{array}{l}
\displaystyle{
\vphantom{\Big(}
\Big(\widetilde{B}^{T}\Lambda_0\Big)_{(i,\xi_i+1),(j,\xi_j+1)}
=
\widetilde{C}_{i,j}(-\xi_i+\xi_j+3)-\widetilde{C}_{i,j}(\xi_i-\xi_j+1)
}\\
\displaystyle{
\vphantom{\Big(}
- \sum_{k\sim i}\Big(
-\widetilde{C}_{k,j}(\xi_j-\xi_i)-\widetilde{C}_{k,j}(-\xi_j+\xi_i)+\widetilde{C}_{k,j}(\xi_j-\xi_i+2)
\Big)
\,.}\end{array}
$$
This expression vanishes in the case when $ \xi_j - \xi_i = -1,1 $ (in the former case the terms mutually cancel, and in the latter by application of the induction relation \eqref{20190801:eq4}), and it is equal to $ - 2 \delta_{ij} $ when $ \xi_i = \xi_j $. On the other hand, for $ a = -2$ we obtain
$$
\begin{array}{l}
\displaystyle{
\vphantom{\Big(}
\Big(\widetilde{B}^{T}\Lambda_{-2}\Big)_{(i,\xi_i+1),(j,\xi_j+1)}
=
\widetilde{C}_{i,j}(\xi_j-\xi_i+1)
- \sum_{k\sim i}\widetilde{C}_{k,j}(\xi_j-\xi_i)
\,.}\end{array}
$$
Again, this expression vanishes in the case when $ \xi_j - \xi_i = -1, 1 $ (in the former case because each summand vanishes, in the latter by application of the induction relation \eqref{20190801:eq4}), and it is equal to $ \delta_{ij} $ in the case when $ \xi_i = \xi_j $.

Next, by \eqref{20190723:eq2} we obtain the following expression for the right block of $\widetilde{B}^T \Lambda_a$:
\begin{equation}\label{20190801:eq6}
\begin{array}{l}
\displaystyle{
\vphantom{\Big(}
\big( \widetilde{B}^T \Lambda_a\Big)_{(i, \xi_i+1),(j,\xi_j-1)}
=
\widetilde{C}_{i,j}(\xi_i-\xi_j + a -1) - \widetilde{C}_{i,j}(-\xi_i+\xi_j + a -1)
}\\
\displaystyle{
\vphantom{\Big(}
-\widetilde{C}_{i,j}(\xi_i-\xi_j + a +1) + \widetilde{C}_{i,j}(-\xi_i+\xi_j + a +1)+\widetilde{C}_{i,j}(\xi_i-\xi_j + a -3) 
}\\
\displaystyle{
\vphantom{\Big(}
+ \widetilde{C}_{i,j}(-\xi_i+\xi_j + a +3)-\widetilde{C}_{i,j}(-\xi_i+\xi_j + a -3) - \widetilde{C}_{i,j}(\xi_i-\xi_j + a +3)
}\\
\displaystyle{
\vphantom{\Big(}
- \sum_{k\sim i} \Big( - \widetilde{C}_{k,j}(-\xi_i+\xi_j + a -2) - \widetilde{C}_{k,j}(\xi_i-\xi_j + a +2)
}\\
\displaystyle{
\vphantom{\Big(}
+ \widetilde{C}_{k,j}(-\xi_i+\xi_j + a+2) + \widetilde{C}_{k,j}(\xi_i-\xi_j + a-2) \Big)
\,.
}\end{array}
\end{equation}
The RHS of \eqref{20190801:eq6} coincides with the difference $ \Big(\widetilde{B}^{T}\Lambda_a\Big)_{(i,\xi_i+1),(j,\xi_j+1)} - \Big(\widetilde{B}^{T}\Lambda_a\Big)_{(j,\xi_j+1),(i,\xi_i+1)} $, which clearly vanishes.

In particular, we obtain $ \Big(\widetilde{B}^{T}\Lambda_0\Big) = \begin{pmatrix} -2 \, \text{Id}_2 \mid 0 \end{pmatrix} $ and  $ \Big(\widetilde{B}^{T}\Lambda_{-2}\Big) = \begin{pmatrix} \text{Id}_2 \mid 0 \end{pmatrix} $.
\end{proof}
\begin{rem}
\begin{enumerate}
\item
The matrices $ \Lambda_{-2} $ and $\Lambda_0 $ are linearly independent when the Lie algebra $\overline{\mathfrak{g}}$ is of rank $ n \geq 2 $. In fact, it is immediate from \eqref{20190723:eq2} that the top-left block of size $ n \times n $ in $\Lambda_{-2} $ is constantly zero, while this is not the case for $\Lambda_0$.
\item
When considering all the parameters $t_a$ inside the quantum torus $ \widetilde{\mathscr{Y}}_{\infty} $ without specializing any quantum parameter, we can analogously build matrices $ \Lambda_a $ for any $ a \in \mathbb{Z}$. A similar argument shows that the pair $(\widetilde{B}, \Lambda_2) $ is compatible (but it is in some examples obtained as a linear combination of $ \Lambda_{-2} $ and $\Lambda_0$). However, it follows from the expressions \eqref{20190801:eq5} and \eqref{20190801:eq6} that all other pairs $(\widetilde{B}, \Lambda_a) $ for $ a \neq -2,0,2 $ are not.
In particular, since $ | \xi_i - \xi_j | \leq 1$ whenever $ a > 4 $ the arguments in each summand in both \eqref{20190801:eq5} and \eqref{20190801:eq6} are greater than $0 $, and hence by repeated application of induction relation \eqref{20190801:eq4} the whole expression vanishes. Moreover, by the properties of $ \widetilde{C}(z)$, each summand in the RHS of both \eqref{20190801:eq5} and \eqref{20190801:eq6} vanishes for $ a \leq -4$.
\end{enumerate}
\end{rem}

\subsubsection{A toroidal isomorphism}
We can finally prove the main Theorem of this section.

\begin{thm}\label{thm:toroidaliso}
The assignment
\begin{equation}\label{20190805:eq2}
z_{(i,r)} \mapsto \widetilde{\Big[ W^{(i)}_{m_{i,r},r+1}\Big]}_{q,\infty}\,,
\end{equation}
extends to a ring isomorphism
$$i_{t_{-2},t_0,1}: \mathcal{A}_{tor}(\mathcal{Q}_1) \overset{\sim}{\longrightarrow} \mathscr{K}_{\infty}(\mathscr{C}_1)\,.$$
Besides, the classes of fundamental modules are toroidal cluster variables.
\end{thm}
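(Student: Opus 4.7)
The plan is to verify that the assignment \eqref{20190805:eq2} on initial variables extends to a ring homomorphism, and then to promote it to an isomorphism via flatness, comparing with the classical isomorphism $i_1$ of Theorem \ref{20190801:thm1}. The claim that the fundamental modules are toroidal cluster variables will emerge as a by-product of the construction.

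First I would check that the initial toroidal seed maps to a bona fide toroidal seed in $\mathscr{K}_{\infty}(\mathscr{C}_1)$: algebraic independence of the images $\widetilde{[W^{(i)}_{m_{i,r},r+1}]}_{q,\infty}$ follows from the classical independence by specializing $t_{-2}, t_0 \mapsto 1$; the quasi-commutation matrices coincide with $\Lambda_{-2}, \Lambda_0$ by the very definition of those matrices in Section \ref{sec:6.2.1}; and the compatibility of both $(\widetilde{B},\Lambda_{-2})$ and $(\widetilde{B},\Lambda_0)$ has just been established.

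To extend to a ring homomorphism, I would argue by induction on the distance from the initial seed in the toroidal exchange graph, showing at each step that the toroidal cluster variable produced by the mutation formula \eqref{mutrel} coincides with the truncated $(q,\infty)$-character of a specific simple module in $\mathscr{C}_1$ (a fundamental module after the first mutations, and more generally a Kirillov-Reshetikhin-type class). The underlying exchange relation is a toroidal T-system, and its shape is uniquely determined by three pieces of data: the classical exchange relation under $i_1$ (fixing the underlying monomials), the specialization $t_{-2} \mapsto 1$ that recovers the standard quantum Grothendieck ring $\mathscr{K}_t(\mathscr{C}_1)$ of Remark \ref{20190726:rem1}, known to be a quantum cluster algebra by \cite{hljems, HL:qGro, b} (which fixes the $t_0$-dependence), and the specialization $t_0 \mapsto 1$ paired with compatibility of $(\widetilde{B},\Lambda_{-2})$ (which fixes the $t_{-2}$-dependence). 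Flatness from Theorem \ref{mainflat} then combines these one-parameter data into the full two-parameter relation, matching the prescription of \eqref{mutrel}. As a by-product, every fundamental class $[L(Y_{i,\xi_i})]_{q,\infty}$ and $[L(Y_{i,\xi_i+2})]_{q,\infty}$ is exhibited as a toroidal cluster variable, either in the initial seed or after a single mutation.

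Surjectivity is then immediate from Proposition \ref{fundgen}, since the fundamental classes generate $\mathscr{K}_{\infty}(\mathscr{C}_1)$ as a $\ZZ[t_{-2}^{\pm 1/2}, t_0^{\pm 1/2}]$-algebra. For injectivity, both $\mathcal{A}_{tor}(\mathcal{Q}_1)$ and $\mathscr{K}_{\infty}(\mathscr{C}_1)$ are flat $\ZZ[t_{-2}^{\pm 1/2}, t_0^{\pm 1/2}]$-deformations of the common classical algebra $\mathcal{A}(\mathcal{Q}_1) \simeq \mathscr{K}(\mathscr{C}_1)$, and the specialization of our map at $t_{-2} = t_0 = 1$ is precisely $i_1$; flatness then lifts injectivity from the classical setting. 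The main obstacle is the inductive identification of mutated toroidal cluster variables with truncated $(q,\infty)$-characters: one must show that the two parameters $t_{-2}$ and $t_0$ combine consistently in each exchange relation, an issue that does not arise in either the classical or the single-parameter quantum setting. The key input is that compatibility of both $(\widetilde{B},\Lambda_{-2})$ and $(\widetilde{B},\Lambda_0)$ rigidifies the two-parameter dependence, so that the two already-understood one-parameter specializations suffice to pin down the full toroidal relation without a fresh calculation at each mutation step.
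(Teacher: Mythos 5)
Your proposal captures the overall strategy — both algebras sit inside the common quantum torus $\mathscr{Y}_\infty$, the compatibility of $(\widetilde{B},\Lambda_{-2})$ and $(\widetilde{B},\Lambda_0)$ underpins the initial seed, and one-parameter specializations do real work — but the central identification step has a genuine gap. You assert that the two axis specializations $t_{-2}\mapsto 1$ and $t_0\mapsto 1$, together with flatness, "pin down the full toroidal relation." A two-variable Laurent polynomial is \emph{not} determined by its restrictions to the two coordinate axes (for instance $2t_{-2}t_0 - t_{-2} - t_0 + 1$ restricts to $t_0$ on $\{t_{-2}=1\}$ and to $t_{-2}$ on $\{t_0=1\}$, yet is not a monomial). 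Knowing that the mutation coefficients $u,v$ of \eqref{mutrel} are Laurent monomials makes this inference valid \emph{for the cluster side}, but for the toroidal T-system in $\mathscr{K}_\infty(\mathscr{C}_1)$ the monomial shape is exactly what has to be established. The paper gets around this by a different mechanism and, crucially, distinguishes two cases you treat uniformly: for $\xi_k = 1$ the toroidal T-system \eqref{20190724:eq4} is verified directly to be a one-step mutation relation; for $\xi_k = 0$ it is not, and the paper invokes the exchange-graph invariance (Theorem \ref{thm:exchgraph}) to reduce the question to checking that the resulting expression for $\widetilde{[L(Y_{k,0})]}_{q,\infty}$ is \emph{bar-invariant}. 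Bar-invariance, not axis specialization, is the ingredient that rigidifies the two-parameter lift, and it is absent from your argument.

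A second, related gap concerns the reverse inclusion $\mathcal{A}_{tor}(\mathcal{Q}_1)\subset\mathscr{K}_\infty(\mathscr{C}_1)$. Your surjectivity via Proposition \ref{fundgen} is conditional on the same shaky induction identifying every mutated cluster variable with a truncated $(q,\infty)$-character. The paper instead argues by contradiction: using the toroidal Laurent phenomenon (Theorem \ref{thm:laurentphen}), the specializations $(t_{-2},t_0)\mapsto(t^a,t^b)$ for \emph{all} $(a,b)\in\ZZ^2$ (the polynomial-vanishing trick from the proof of Theorem \ref{pos}, strictly stronger than your two axis specializations), and the fact — cited from \cite{GLS2}, or available by the explicit description of finite-type quantum cluster algebras — that the specialized quantum cluster algebras are generated by the fundamental cluster variables. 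You omit both the all-integer-pairs specialization device and the \cite{GLS2} input that makes it bite.
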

\begin{proof}
First note that \eqref{20190805:eq2} is explicitly given by
\begin{equation}\label{20190805:eq3}
\begin{array}{l}
z_{(i,\xi_i +1)} \mapsto  \widetilde{\Big[ W^{(i)}_{1,\xi_i+2}\Big]}_{q,\infty} = \widetilde{\Big[ L(Y_{i, \xi_i+2})\Big]}_{q,\infty} = Y_{i, \xi_i+2}
\,,\\
z_{(i,\xi_i -1)} \mapsto \widetilde{\Big[ W^{(i)}_{2,\xi_i}\Big]}_{q,\infty} = \widetilde{\Big[ L(Y_{i,\xi_i}Y_{i, \xi_i+2})\Big]}_{q,\infty} = Y_{i,\xi_i}Y_{i, \xi_i+2}.
\end{array}
\end{equation}
This makes sense, as the classes $\widetilde{\Big[ L(Y_{i, \xi_i+2})\Big]}_{q,\infty} $, $ \widetilde{\Big[ L(Y_{i,\xi_i}Y_{i, \xi_i+2})\Big]}_{q,\infty} $ consist of a monomial which are algebraically independent.

By construction, both algebras $\mathscr{K}_{\infty}(\mathscr{C}_1)$ and $ \mathcal{A}_{tor}(\mathcal{Q}_1) $ live inside the same quantum torus $\mathscr{Y}_\infty$.
Moreover, they both provide deformations of their classical counterparts $ \mathscr{A}(\mathcal{Q}_1) $ and $\mathscr{K}(\mathscr{C}_1)$, which are isomorphic by Theorem \ref{20190801:thm1}.

Next, let us prove the inclusion $\mathscr{K}_{\infty}(\mathscr{C}_1)\subset \mathcal{A}_{tor}(\mathcal{Q}_1) $. As  $\mathscr{K}_{\infty}(\mathscr{C}_1) $ is generated by the truncated classes of its fundamental modules, we need to prove that all these classes can be obtained as toroidal cluster variables.
For the classes $ \widetilde{[L(Y_{i,\xi_i+2})]}_{q,\infty}  $, it is clear by \eqref{20190805:eq3} that they belong to the initial toroidal cluster seed for $\mathcal{A}_{tor}(\mathcal{Q}_1) $. Thus, we are left to consider the classes $  \widetilde{[L(Y_{i,\xi_i})]}_{q,\infty} $.

Using the explicit formulas for the truncated $ (q,\infty)$-characters at the beginning of Section \ref{sec:6.2.1},
we can compute the associated toroidal $T$-system (already known in the case $ t_{-2} = 1 $) and obtain,
\begin{equation}\label{20190724:eq4}
\begin{array}{r}
\displaystyle{
\vphantom{\Big(}
\widetilde{[L(Y_{k,\xi_k+2})]}_{q,\infty} \ast \widetilde{[L(Y_{k,\xi_k})]}_{q,\infty}
=
\prod_{a = 0,-2} t_a^{\frac{-N_a(k,0;k,2)}{2}} \widetilde{[L(Y_{k,\xi_k}Y_{k,\xi_k+2})]}_{q,\infty} 
}\\
\displaystyle{
\vphantom{\Big(}
+ \prod_{a=0,-2} t_a^{-\sum_{i \sim k} \frac{N_a(i,0;k,1)}{2}}
\Big(\ast_{i \sim k } \widetilde{[L(Y_{i,\xi_k+1})]}_{q,\infty}\Big)
\,.}\end{array}
\end{equation}

It can be easily checked that the product of the classes $ \widetilde{[L(Y_{i,\xi_k+1})]}_{q,\infty} $, for $ k\sim i $ does not depend on the order, and it is in fact bar-invariant, hence the expression makes sense.

Note that equation \eqref{20190724:eq4} always corresponds to an exchange mutation in the toroidal cluster algebra $ \mathcal{A}_{tor}(\mathcal{Q}_1)$, however it is a one-step mutation only in the case when $ \xi_k = 1 $
where, according to the toroidal mutation rule \eqref{mutrel} (cfr. \cite[Prop. 4.9]{bz}),
we obtain
\begin{equation}\label{20190724:eq1}
\begin{array}{c}
\displaystyle{
\vphantom{\Big(}
z_{(k,2)}\ast z_{(k,2)}'
=
\prod_{a = 0,-2} t_a^{-\frac12 \Lambda_a((k,0),(k,2))} z_{(k,0)}
+ \prod_{a=0,-2} t_a^{\frac12 \sum_{i \sim k} \Lambda_a((k,2),(i,1))}\Big( \ast_{i \sim k } z_{(i,1)}\Big)\,.
}\end{array}
\end{equation} 
Since $\Lambda_a((k,0),(k,2))= \mathcal{N}_a(k,0;k,2) $ and $\Lambda_a((k,2),(i,1)) = -\mathcal{N}_a(k,0;i,1)$,
the map \eqref{20190805:eq3}, gives an identification of the truncated class $ \widetilde{[L(Y_{k,1})]}_{q,\infty}$ with the mutated cluster variable $ z_{(k,2)}' $ inside $ \mathcal{A}_{tor}(\mathcal{Q}_1)$.

On the other hand, when $ k $ is such that $\xi_k = 0 $, the first-step mutation for the variable $z_{(k,1)}$ does not coincide with the corresponding toroidal $T$-system \eqref{20190724:eq4} (see Remark \ref{20190805:rem1}).
However, we know from the classical theory that after performing the sequence of mutations $ \mathscr{S} := \mu_{i_s} \circ \cdot \circ \mu_{i_1} $ on the quiver $ \mathcal{Q}_1$ (where $ i_1, \ldots, i_s $ are all the indices in $ I $ such that $ \xi_{i_1} = \cdots = \xi_{i_s} = 1 $), we obtain a new quiver $ \mathcal{Q}_1'$ where the exchange relations $ z_{(k,1)}\ast z_{(k,1)}^{(\mathscr{S})} $ coincide with the image of the toroidal $T$-system \eqref{20190724:eq4} for the product $ \widetilde{[L(Y_{k,2})]}_{q,\infty} \ast \widetilde{[L(Y_{k,0})]}_{q,\infty} $, under the specialization $t_a\mapsto 1 $.
This identifies the class of the fundamental module $ L(Y_{k,0})$ with the mutated toroidal variable $z_{(k,1)}^{(\mathscr{S})}$ in the classical case.

By Theorem \ref{thm:exchgraph} it is therefore sufficient to check that this choice of parameters makes \eqref{20190724:eq4} a toroidal cluster algebra exchange relation, namely that we obtain a bar-invariant expression for $\widetilde{[L(Y_{k,0})]}_{q,\infty}$.
Since both summands
$$
\prod_{a = 0,-2} t_a^{\frac{-N_a(k,0;k,2)}{2}} \widetilde{[L(Y_{k,\xi_k+2})]}_{q,\infty}^{-1} \ast \widetilde{[L(Y_{k,\xi_k}Y_{k,\xi_k+2})]}_{q,\infty}
$$
and
$$
\prod_{a=0,-2} t_a^{-\sum_{i \sim k} \frac{N_a(i,0;k,1)}{2}}
\widetilde{[L(Y_{k,\xi_k+2})]}_{q,\infty}^{-1} \ast \Big(\ast_{i \sim k } \widetilde{[L(Y_{i,\xi_k+1})]}_{q,\infty}\Big)
$$
are bar-invariant, the claim follows. Thus, $\mathscr{K}_{\infty}(\mathscr{C}_1) \subset  \mathcal{A}_{tor}(\mathcal{Q}_1) $. 

Let us prove the other inclusion.
For any $ a, b \in \mathbb{Z} $, let us consider the specialization $ (t_{-2},t_0) \mapsto (t^a,t^b)$ as in the Proof of Theorem \ref{pos}.
This gives a quantum cluster algebra $ \mathcal{A}_{t^a,t^b}(\mathcal{Q}_1)$, generated by the fundamental cluster variables, i.e. the images of the classes of the fundamental modules in $\mathscr{C}_1 $. Indeed, this is true for the classical cluster algebra, and then the quantum counter-part of this statement follows from 
\cite{GLS2}. As the cluster algebras are of finite-type, this also follows simply by explicit description of finite-type quantum cluster algebras (as pointed out to the authors by Bernard Leclerc). An analog explicit description would also work in the toroidal case, but let us discuss another argument. By Theorem \ref{thm:laurentphen}, we can expand an element $X\in  \mathcal{A}_{tor}(\mathcal{Q}_1)$ as
$$X = \sum_{M\in \mathcal{L}} P_M(t_{-2},t_0) M\,\text{ where }P_M(t_{-2},t_0)\in\mathbb{Z}[t_{-2}^{\pm \frac{1}{2}},t_0^{\pm \frac{1}{2}}]$$
and $\mathcal{L}$ is a finite set of bar-invariant Laurent monomials in the toroidal variables of the initial seed $ \textbf{z}_1$.
Specializing $ (t_{-2},t_0) \mapsto (t^a,t^b)$ with $a$, $b$ distinct prime numbers, we are reduced to a quantum cluster algebra
discussed above and there is a finite set $\mathcal{M}$ of monomials in the fundamental cluster variables so that
$$X_{(t_{-2},t_0) \mapsto (t^a,t^b)}  = \sum_{M\in\mathcal{L}} P_M(t^a,t^b) M = \sum_{m\in\mathcal{M}} Q_{m,a,b}(t) m\text{ with }Q_{m,a,b}(t)\in\mathbb{Z}[t^{\pm 1/2}].$$ 
Each element $m\in \mathcal{M}$, now seen in $\mathcal{A}_{tor}(\mathcal{Q}_1)$, can be expanded as
$$m = \sum_{M\in\mathcal{L}} R_{M,m}(t_{-2},t_0) M.$$
($\mathcal{L}$ is enlarged with a finite number of bar-invariant Laurent monomials in the initial toroidal variables).
As $\mathcal{M}$ is free over $\mathbb{Q}(t_{-2}^{\frac{1}{2}},t_0^{\frac{1}{2}})$, there are 
$S_{M,m}(t_{-2},t_0)\in\mathbb{Q}(t_{-2}^{\frac{1}{2}},t_0^{\frac{1}{2}})$ 
so that 
$$\sum_{M\in\mathcal{L}} S_{M,m'}(t_{-2},t_0) R_{M,m}(t_{-2},t_0) = \delta_{m,m'}\text{ for any $m,m'\in\mathcal{M}$}.$$ 
They can be specialized at $(t_{-2},t_0) \mapsto (t^a,t^b) $ for all distinct primes $a$, $b$ large enough. Let
$$Q_m(t_{-2},t_0) = \sum_{M\in\mathcal{L}} S_{M,m}(t_{-2},t_0)P_M(t_{-2},t_0).$$
So $P_M(t^a,t^b) = \sum_{m\in\mathcal{M}} Q_{m,a,b}(t)R_{M,m}(t^a,t^b)$ and we obtain
$$Q_m(t^a,t^b) = \sum_{M\in\mathcal{L}} S_{M,m}(t^a,t^b) \sum_{m'\in\mathcal{M}} Q_{m',a,b}(t)R_{M,m'}(t^a,t^b)=   
Q_{m,a,b}(t)\in \mathbb{Z}[t^{\pm 1/2}],$$ 
whose number of terms is bounded independently of $a,b$ (as above, from explicit description of finite-type quantum cluster algebras).
As this is true for all distinct prime numbers $a,b$ large enough, this implies that $Q_m(t_{-2},t_0)$ is a Laurent polynomial.
Indeed, consider the development as a Laurent formal power series 
$$Q_m(t_{-2},t_0) = t_{-2}^At_0^B\sum_{\alpha,\beta\geq 0} \lambda_{\alpha,\beta}t_{-2}^{\frac{\alpha}{2}} t_0^{\frac{\beta}{2}} \in \mathbb{C}((t_{-2}^{\frac{1}{2}},t_0^{\frac{1}{2}})).$$
Fix $(\alpha,\beta)$ so that $t_{-2}^{\frac{\alpha}{2}} t_0^{\frac{\beta}{2}}$ occurs and consider $(\alpha',\beta')$ so that $a\alpha + b\beta = a\alpha' + b \beta'$. Then $a(\alpha - \alpha') = b(\beta' - \beta)$ and, as $a\neq b$ are distinct prime numbers, there is $\mu\in\mathbb{Z}$ so that $\alpha - \alpha' = b \mu$ and $\beta - \beta' = -a\mu$. 
We get $ -\beta/a \leq \mu\leq \alpha/b$. Then $\mu$ depends on $a,b$ but is an integer, so for $a,b$ large enough, we obtain $\mu = 0$, and $(\alpha,\beta) = (\alpha',\beta')$.
Hence $\lambda_{\alpha,\beta}t_{-2}^{\frac{\alpha}{2}} t_0^{\frac{\beta}{2}}$ in the only term contributing to the coefficient of $t^{\frac{\alpha a + \beta b}{2}}$ in $Q_{m,a,b}(t)$. And so if $\lambda_{\alpha,\beta}\neq 0$, then $t^{\frac{\alpha a + \beta b}{2}}$ occurs in $Q_{m,a,b}(t)$. 
As the number of monomials in $Q_{m,a,b}(t)$ is bounded as explained above, this implies that only a finite number of terms occur in the 
development of $Q_m(t_{-2},t_0)$. 

Now $X$ and  $\sum_{m\in\mathcal{M}} Q_m(t_{-2},t_0) m$ 
coincide after specialization for infinitely many primes $a,b$, hence they coincide as 
in the Proof of Theorem \ref{pos}. This proves the other inclusion.
\end{proof}

\begin{rem}\label{20190805:rem1}
When $ \xi_k = 0 $, the exchange relations for the initial seed of $ \mathcal{A}_{tor}(\mathcal{Q}_1)$ are
\begin{equation}\label{20190828:eq1}
\begin{array}{l}
\displaystyle{
\vphantom{\Big(}
z_{(k,1)}\ast z_{(k,1)}'
=
\prod_{a = 0,-2} t_a^{-\frac12\Big[ \Lambda_a((k,-1),(k,1)) - \sum_{i \sim k}\Lambda_a((k,1),(i,2)) \Big]}
\Big(\prod_{i \sim k } z_{(i,2)} \Big) z_{(k,-1)}
+
\prod_{i \sim k } z_{(i,0)}
\,.}
\end{array}
\end{equation} 

This identifies the mutated variable $z_{(k,1)}'$ with the class of the minimal affinization module $ \widetilde{\big[ L(Y_{i,0}\prod_{j\sim i}Y_{j,3})\big]}_{q,\infty} $ in $\mathscr{K}_{\infty}(\mathscr{C}_1)$.
In fact, following Section \ref{truncinf}, by the formulas for the truncated $q$-characters in \cite[Prop 6.7]{hl} we have
$$ \widetilde{[ L(Y_{i,0}\prod_{j\sim i}Y_{j,3})]}_{q,\infty} 
:=  \widetilde{\chi_q(L(Y_{i,0}\prod_{j\sim i}Y_{j,3}))}  
= Y_{i,0}\prod_{j\sim i}Y_{j,3}\big(1 + A_{i,1}^{-1}\big) \,. $$
Thus, 
$ \widetilde{\big[ L(Y_{i,2})\big]}_{q,\infty}  \ast \widetilde{\big[ L(Y_{i,0}\prod_{j\sim i}Y_{j,3})\big]}_{q,\infty} $
coincides with the RHS of \eqref{20190828:eq1}, under the identification \eqref{20190805:eq3}.
\end{rem}


\section{$A-Y$ Poisson brackets}\label{seven}

In this section we discuss similarities between the various quantum tori used in this paper as well as 
relations to the Poisson structures introduced by Frenkel-Reshetikhin in \cite{Fre:def}.

Let us study the quasi-commutation relations between the monomials $ A_{i,r} $ and $ Y_{j,s} $ inside the quantum torus $ \widetilde{\mathscr{Y}}_{\infty}$ for $i,j \in I $, $r, s \in \mathbb{Z}$, $r - s + \xi_i - \xi_j \in 1 + 2\mathbb{Z}$.
From Equation \eqref{20190320:eq2} we obtain \footnote{Again, under a sign change. Here, with the purpose that the result is compatible with the quasi-commutation product between the variables $ Y_{i,p}^{\pm 1}$.}
$$ A_{i,r}\ast Y_{j,s} = \prod_{a \in \mathbb{Z}} t_a^{N_a(i,r;j,s)} Y_{j,s} \ast A_{i,r}$$ 
where
$$ 
N_a(i,r;j,s) = \delta_{i,j}\big(\delta_{a,r-s-1} - \delta_{a,r-s+1} - \delta_{a,s-r-1} + \delta_{a,s-r+1}\big)\,,
$$
and therefore
\begin{equation*}
A_{i,r}\ast Y_{j,s} = t_{r-s-1}^{\delta_{ij}}t_{r-s+1}^{-\delta_{ij}}t_{s-r-1}^{-\delta_{ij}}t_{s-r+1}^{\delta_{ij}} Y_{j,s} \ast A_{i,r}
\,.
\end{equation*}
When we consider the $\ast$-product between the (images of the) same elements in the quotient quantum torus $ \mathscr{Y}_{\infty}$, we observe that because of the relations $\mathscr{R}_k$ (Definition \ref{quotientorus}),
the elements $ A_{i,r} $, $ Y_{j,s}$ commute whenever $| s - r | >  1$, while 
$$
A_{i,r}\ast Y_{j,s} = (t_{-2}t_0^{-2}t_2)^{\delta_{i,j}(\delta_{s-r,1} - \delta_{r-s,1})}  Y_{j,s} \ast A_{i,r}
\,.
$$
The quasi-commutation relations between the monomials $A_{i,r}$ and $ Y_{j,s}$ only depends on the one parameter $ t:= t_{-2}t_{0}^{-2}t_{2} $.
It coincides with the relation between the same elements (possibly up to a power $ t \mapsto t^{-1} $) obtained in 
\cite{Nak:quiver}, \cite{VV:qGro}, \cite{H1}. These quasi-commutation relations are the same in these papers although the quasi-commutation
relations between $Y$-variables are different (see also \cite[Remark 3.1]{HL:qGro}). 
These relations correspond to the following Poisson brackets : 
$$\{A_{i,r} , Y_{j,s}\} = \delta_{i,j}(\delta_{s-r,1} - \delta_{r-s,1}) A_{i,r}Y_{j,s}$$
which is to be fundamental in this picture. In fact, they can also be derived from the earlier work of 
Frenkel and Reshetikhin \cite{Fre:def}. They introduced certain elements $ A_{i}(z) $ and $ Y_{j}(z)$ as generating series for a Heisenberg algebra depending on two parameters $ \mathscr{H}_{q,t}$, which does not coincide with our $\mathscr{H}$ but from which the construction in \cite{H1} was inspired.

When considering the limit for $ t\mapsto 1 $, the Heisenberg algebra $ \mathscr{H}_{q,t} $ admits a Poisson structure. Here, up to a power of $ q$, we obtain the following identity relating the series  $ A_{i}(z) $ and $ Y_{j}(z)$ (cfr. \eqref{20190624:eq1}):
$$ A_{i}(z) = Y_i(zq^{-1})Y_i(zq)\prod_{j\sim i} Y_j(z)^{-1}\,.$$

By the formulas in \cite[Appendix A]{Fre:def}, we then obtain that the Poisson bracket between $ A_{i,r} := A_i(zq^r) $ and $ Y_{j,s} := Y_j(zq^s) $ in the limit $ \mathscr{H}_{q,1} $, coincides (up to a power $ t \mapsto t^{\pm 2} $) with all the Poisson brackets considered above.

We can therefore deduce that the Poisson bracket between the $A_{i,r} $ and the $ Y_{j,s}$ is somehow extremely rigid, despite the Poisson bracket between the variables $ Y_{i,p}$, $Y_{j,s}$ depending strongly on the deformation.
\begin{rem}
Note that it is already known that the quotient group $\mathcal{P}_q / \mathcal{Q}_q $, where $\mathcal{P}_q $ is the free abelian group generated by the $ Y_{i,a}$ ($i \in I, a \in \mathbb{C}^\ast$) and $\mathcal{Q}_q$ is the subgroup of $\mathcal{P}_q$ generated by the $ A_{i,a}$, plays an important role
in the study of these categories (see \cite{CM}).
\end{rem}

One of the original motivations to study quantum Grothendieck rings
is the existence of analogs of Kazhdan-Lusztig polynomials introduced by Nakajima \cite{Nak:quiver} as coefficients of the transition matrix
between the standard and canonical classes (see \cite{HL:qGro} for a review). These polynomials depend up to a factor only on the quasi-commutation 
relations between the $A$ and $Y$-variables as explained in \cite[Section 5.9]{HL:qGro}. That is why in our toroidal context, although the 
structure of the toroidal Grothendieck rings depends on various parameters, we obtain
polynomial depending only on one parameter (or a fractional power of)  $ t = t_{-2}t_{0}^{-2}t_{2}$.

\begin{ex}
Let $ \overline{\mathfrak{g}} = sl_3 $ and $ \mathscr{C}' = \mathscr{C}_{\mathcal{Q}}$.
Let $ X_1$, $ X_2 $, $ X_3 $ and $X_1'$ as in Example \ref{ex:quantumtoroidal1}.
By \eqref{20190503:eq1} (or, equivalently, \eqref{20181130:eq1b}) we have 
$$
{X_1'}^2  \ast X_1^2
=
(t_1t_2)^{2} \big(X_3^2 + (s^{-1} + s^{-3})X_2 X_3 + s^{-4} X_2^2\big),
$$
$$
{X_1'}^3  \ast X_1^3
=
(t_1t_2)^{\frac92} \big(X_3^3 + (s^{-1} + s^{-3} + s^{-5})X_2X_3^2 + (s^{-4} + s^{-6} + s^{-8})X_2^2 X_3 + s^{-9}X_2^3 \big),
$$
and
$$
\begin{array}{l}
\displaystyle{
{X_1'}^4  \ast X_1^4
=
(t_1t_2)^{8} \Big(X_3^4 +
 ( s^{-1} + s^{-3} + s^{-5} + s^{-7}) X_2X_3^3 
+ (s^{-4} + s^{-6} + 2s^{-8}
}\\
\displaystyle{
+ s^{-10} + s^{-12}) X_2^2X_3^2 
+ (s^{-9} + s^{-11} + s^{-13}+ s^{-15})X_2^3 X_3 
+  s^{-16}X_2^4
\Big),
}
\end{array}
$$
where $ s=t_1t_2^{\frac12} =  t_{-2}^{-\frac12}t_0t_2^{\frac12}$ by \eqref{20190503:eq5}.
\end{ex}


\section{Further possible developments}\label{eight}

We discuss various questions arising from the main results of this paper: the analogs of our result for non ADE-types, 
the existence of toroidal $T$-systems, the toroidal cluster structure for larger categories and the interpretation of 
toroidal cluster monomials.

\subsection{Beyond ADE types}
It would be interesting to study what happens for $\overline{\mathfrak{g}}$ of non simply-laced type.
In this case, some technical adjustments are required, and the construction is more complicated for the lack of symmetry of the (quantum) Cartan matrix. For instance, the relations $(\mathscr{R}_k)_{k \geq 1}$ would be modified in a form depending on $i\in I$,
the definition and properties of truncated $(q,\infty)$-characters (as in Section \ref{truncinf}) would be modified, and, more importantly, it is now known that cluster algebra structures arising from the representation theory of quantum affine algebras are controlled by simply-laced quivers, even
when the type of the underlying Lie algebra is non simply-laced (see \cite{hljems}).

However, we have some results for $ \overline{\mathfrak{g}}$ of type $ B_2$.
Here, the Cartan matrix associated to $ \overline{\mathfrak{g}}$ is $ \begin{pmatrix} 2 & -2 \\ -1 & 2 \end{pmatrix} $, which is symmetrizable with symmetrizing matrix $ D = (d_i\delta_{ij})_{1\leq i,j \leq 2} = \text{diag}(1,2)$.
As a consequence, the corresponding quantum Cartan matrix is 
$$
C(z) = \begin{pmatrix} z + z^{-1} & -z - z^{-1} \\ -1 & z^2 + z^{-2} \end{pmatrix}\,,
$$
which is invertible with inverse (cfr. \cite[Example 4.1]{HO}, or \cite[Appendix A]{gtl})
$$
\widetilde{C}(z) = \frac{1}{z^3 + z^{-3}} \begin{pmatrix} z^2 + z^{-2} & z + z^{-1} \\ 1 & z + z^{-1} \end{pmatrix}.
$$
We can expand each entry $ \widetilde{C}_{i,j}(z) $ as a formal power series $ \widetilde{C}_{i,j}(z) = \sum_{m \geq 1} \widetilde{C}_{i,j}(z) z^m $ , and 
\begin{equation}
\begin{array}{l}
\displaystyle{
\vphantom{\Big(}
\widetilde{C}_{1,1}(z)
=
z + z^5 - z^7 - z^{11} + z^{13} + z^{17} \pm \ldots
\,}\\
\displaystyle{
\vphantom{\Big(}
\widetilde{C}_{2,1}(z)
=
z^3 - z^9 + z^{15}  \pm \ldots
\,}\\
\displaystyle{
\vphantom{\Big(}
\widetilde{C}_{2,2}(z) = \widetilde{C}_{1,2}(z)
=
z^2 + z^4 - z^8 - z^{10} + z^{14} + z^{16} \pm \ldots
\,.}
\end{array}
\end{equation}
The counterpart to the properties in \eqref{20190801:eq4} are proved for type $ B_n $ in \cite{HO}.
In the case $ n = 2$, they provide the periodicity condition
$$\widetilde{C}_{i,j}(m+6) = - \widetilde{C}_{i,j}(m)\,$$
for $i,j\in I$ and $m \ge 1$, together with the condition $\widetilde{C}_{j,i}(d_j) = \delta_{ij}$
and the induction relation
$$
\widetilde{C}_{2,i}(m-2) +  \widetilde{C}_{2,i}(m+2) -  \widetilde{C}_{1,i}(m) = 0\,,\quad m \geq 1
\,.
$$
The (extended) quantum torus $\widetilde{\mathscr{Y}}_{\infty}(B_2)$ is defined, as in Section \ref{sec:quantumtori}, as the
$\mathbb{Z}[t_a^{\pm 1} \mid a \in \mathbb{Z}]$-algebra with generators $Y_{i, r}^{\pm 1} $, $(i,r) \in \{1,2\}\times \mathbb{Z} =: \hat{I}$ (we denote $Y_{i, r}^{\pm 1}:= Y_{i, q^r}^{\pm 1} $), and relations\footnote{Note that in \cite{HO} the authors consider the same quantum torus up to a minus sign. Moreover, the quantum Cartan matrices coincides up to a swap of rows (resp. columns) $1$ and $2$, and its inverse $\widetilde{C}(z)$ is expanded for negative $ z$.}
\begin{equation}\label{20190805:eq5}
Y_{i, p}\ast  Y_{j, s} = \left( \prod_{a\in \mathbb{Z}} {t_a}^{\mathcal{N}_a(i, p; j, s)}\right) Y_{j, s} \ast Y_{i, p},
\end{equation}
where the formula for $ \mathcal{N}_a: \hat{I} \times \hat{I} \longrightarrow \mathbb{Z}$ here generalizes the one for the simply-laced case:
\begin{equation}\label{20190805:eq6}
\begin{array}{c}
\displaystyle{
\vphantom{\Big(}
\mathcal{N}_a(i,p; j,s)
=
\widetilde{C}_{j,i}(p-s- d_j+a) -\widetilde{C}_{j,i}(s-p- d_j+a) 
}\\
\displaystyle{
\vphantom{\Big(}
-\widetilde{C}_{j,i}(p-s+ d_j+a) + \widetilde{C}_{j,i}(s-p+d_j+a) 
.}
\end{array}
\end{equation}

As before, let $\mathscr{C}$ be the category of finite-dimensional representations of $ U_q(\mathfrak{g})$ of type $ 1 $.
Analogues of the subcategories $ \mathscr{C}_{\mathcal{Q}}$ for type $ B $ are the categories $ \mathscr{C}_{\mathcal{Q}^\flat} $ introduced in \cite{OS}, and exploited in \cite{HO} in order to prove an analogue isomorphism with a quantized coordinate algebra.
It is remarkable that the $t$-deformed quantum Grothendieck ring of the category $ \mathscr{C}_{\mathcal{Q}^\flat} $ in type $ B_n $ is isomorphic to a quantized coordinate algebra of type $A_{2n-1}$.
As a consequence of this isomorphism, the quantum Grothendieck ring $ \mathscr{K}_t(\mathscr{C}_{\mathcal{Q}^\flat}) $ possesses a quantum cluster algebra structure. Moreover, there is an isomorphism of $t$-deformed quantum Grothendieck rings $ \mathscr{K}_t(\mathscr{C}_{\mathcal{Q}^\flat,B_n}) \overset{\sim}{\longrightarrow}  \mathscr{K}_t(\mathscr{C}_{\mathcal{Q},A_{2n-1}}) $.

Our purpose is therefore to see if and to what extent these results can be extended to the toroidal setting, in particular in the case $ n = 2$.
The construction of the category $ \mathscr{C}_{\mathcal{Q}^\flat} $ is described in \cite{HO}, and it depends on some twist of a quiver of type $ A_2$ and of a certain choice $ \flat \in \{ <, >\}$. Here, we will assume $ \flat = >$.

Following the construction in \cite[Section 3]{HO}, $\mathscr{C}_{\mathcal{Q}^\flat}$ can be described as the full subcategory of $\mathscr{C}$ whose simple components are indexed by dominant commutative monomials $\mathbb{Y}^\flat= \{Y_{1,r}, Y_{2,r+1} \mid r = 0,2,4\}$. It is a monoidal subcategory \cite[Lemma 3.26]{HO}.

As all Kirillov-Reshetikhin (and hence fundamental) modules are thin for $\overline{\mathfrak{g}}$ of type $ B_2$, we can define the (generalized) toroidal Grothendieck ring $ \widetilde{\mathscr{K}}_{\infty}(\mathscr{C}_{\mathcal{Q}^\flat}) $ as the $\mathbb{Z}[t_a^{\pm 1} \mid a \in \mathbb{Z}]$-subalgebra of the quantum torus $\widetilde{\mathscr{Y}}_{\infty}(B_2)$ generated by the $(q,\infty)$-classes of the fundamental modules indexed by the set $\mathbb{Y}^\flat$, with product $\ast$ (see Remark \ref{usesmall}):
$$ [ V_i(q^{r}) ]_{q,\infty} := \chi_{q,t}( V_i(q^{r})) = \chi_q( V_i(q^{r}) )\,.$$
We may use the truncated $(q,\infty)$-characters as in Section \ref{truncinf}.
Following \cite[Appendix]{H1}, we have: 
$$
\begin{array}{l l l}
\widetilde{[V_1(1)]}_{q,\infty} = Y_{1,0} + Y_{1,2}^{-1}Y_{2,1} + Y_{2,5}^{-1}Y_{1,4}\,,
& \widetilde{[V_1(q^2)]}_{q,\infty} = Y_{1,2} + Y_{1,4}^{-1}Y_{2,3}\,,
& \widetilde{[V_1(q^4)]}_{q,\infty} = Y_{1,4}\,,\\ 
\widetilde{[V_2(q)]}_{q,\infty} = Y_{2,1} + Y_{2,5}^{-1}Y_{1,2}Y_{1,4}\,,
& \widetilde{[V_2(q^3)]}_{q,\infty} = Y_{2,3}\,,
& \widetilde{[V_2(q^5)]}_{q,\infty} = Y_{2,5}\,.
\end{array}
$$

However, the issue of constructing a flat deformation again forces us to impose additional relations among the parameters $ t_a^{\pm\frac12}$ in the quantum torus $\widetilde{\mathscr{Y}}_{\infty}(B_2)$, and to modify our definition of toroidal Grothendieck ring.
An instance of this phenomenon appears when computing the $\ast$-product between the classes $ \widetilde{[V_1(1)]}_{q,\infty} $ and $ \widetilde{[V_1(q^2)]}_{q,\infty} $ (or equivalently between the classes $ \widetilde{[V_1(1)]}_{q,\infty} $ and $ \widetilde{[V_1(q^4)]}_{q,\infty} $).
In fact, according to \eqref{20190805:eq5}, we obtain
$$
\begin{array}{l}
\displaystyle{
\vphantom{\Big(}
\widetilde{[V_1(1)]}_{q,\infty}\ast
\widetilde{[V_1(q^2)]}_{q,\infty} 
=
\prod_{a \in\mathbb{Z}} t_a^{\frac{\mathcal{N}_a(1,0;1,2)}{2}}
\Big(Y_{1,0}Y_{1,2}+Y_{2,3}Y_{2,5}^{-1} 
+Y_{1,2}^{-1}Y_{1,4}^{-1}Y_{2,1}Y_{2,3}
}\\
\displaystyle{
\vphantom{\Big(}
+ \prod_{a \in\mathbb{Z}} t_a^{\frac{2\mathcal{N}_a(1,0;1,2)-\mathcal{N}_a(2,0;2,4)+\mathcal{N}_a(1,0;2,1)}{2}}
Y_{1,0}Y_{1,4}^{-1}Y_{2,3}\Big)
+ \prod_{a \in\mathbb{Z}} t_a^{\frac{\mathcal{N}_a(1,0;2,1)}{2} }\widetilde{[V_2(q)]}_{q,\infty}
\,,}
\end{array}
$$
where
\begin{equation*}
\begin{array}{l}
\displaystyle{
\vphantom{\Big(}
\prod_{a \in\mathbb{Z}} t_a^{\mathcal{N}_a(1,0;1,2)}
=
t_{-2}t_0^{-1}t_4^{-1}t_8t_{10}\prod_{k \geq 2} {t_{6k+2}}^{-(-1)^{k}}{t_{6k+4}}^{-(-1)^{k}}
\,,}\\
\displaystyle{
\vphantom{\Big(}
\prod_{a \in\mathbb{Z}} t_a^{\mathcal{N}_a(1,0;2,1)}
=
t_0t_2^{-1}t_4^{-1}t_8t_{10}\prod_{k \geq 2} {t_{6k+2}}^{-(-1)^{k}}{t_{6k+4}}^{-(-1)^{k}}
\,,}\\
\displaystyle{
\vphantom{\Big(}
\prod_{a \in\mathbb{Z}} t_a^{\mathcal{N}_a(2,0;2,4)}
=
t_{-4}t_{-2}t_0^{-1}t_2^{-2}t_4^{-2}t_8^3t_{10}^3\prod_{k \geq 2} {t_{6k+2}}^{-3(-1)^{k}}{t_{6k+4}}^{-3(-1)^{k}}
\,.}
\end{array}
\end{equation*}
Therefore, when combining with the product of the same classes in the opposite order, there appears a term not indexed by a dominant monomial:
$$
\begin{array}{l}
\displaystyle{
\vphantom{\Big(}
\widetilde{[V_1(1)]}_{q,\infty} 
\ast
\widetilde{[V_1(q^2)]}_{q,\infty} 
-
\prod_{a \in\mathbb{Z}} t_a^{\mathcal{N}_a(1,0;1,2)}
\widetilde{[V_1(q^2)]}_{q,\infty} 
\ast
\widetilde{[V_1(1)]}_{q,\infty} 
\,}\\
\displaystyle{
\vphantom{\Big(}
=
\Big(
1
-
t_{-2}t_{0}^{-2}t_2
\Big)
\prod_{a \in\mathbb{Z}} t_a^{\frac{\mathcal{N}_a(1,0;2,1)}{2} }
\widetilde{[V_2(q)]}_{q,\infty} 
\,}\\
\displaystyle{
\vphantom{\Big(}
+\Big(
1
-
t_{-4}t_{-2}^{-1}t_2^{-1}t_4
\Big)
\prod_{a \in\mathbb{Z}} t_a^{\frac{3\mathcal{N}_a(1,0;1,2)-\mathcal{N}_a(2,0;2,4)+\mathcal{N}_a(1,0;2,1)}{2}}
Y_{1,0}Y_{1,4}^{-1}Y_{2,3}
\,.}
\end{array}
$$
Thus, we let $\mathscr{Y}_{\infty}(B_2)$ be the quotient of the quantum torus $\widetilde{\mathscr{Y}}_{\infty}(B_2)$ by the relation 
$$t_{-4}^{\frac12}t_{-2}^{-\frac12}t_2^{-\frac12}t_4^{\frac12} = 1,$$ 
and we let $ \mathscr{K}_{\infty}(\mathscr{C}_{\mathcal{Q}^\flat}) $ be the subring of this quotient quantum torus generated by the images of the same fundamental classes
$  \widetilde{[V_1(q^{r})]}_{q,\infty}$, $\widetilde{[ V_2(q^{r+1}) ]}_{q,\infty}$ for $ r = 0,2,4$.
This also provides a consistent definition of the $(q,\infty)$-character of the simple modules
$$
\begin{array}{l}
\widetilde{[L(Y_{1,0}Y_{1,2})]}_{q,\infty} = Y_{1,0}Y_{1,2} +
Y_{1,2}^{-1}Y_{1,4}^{-1}Y_{2,1}Y_{2,3}+Y_{2,3}Y_{2,5}^{-1} + Y_{1,0}Y_{1,4}^{-1}Y_{2,3}\,,
\\
\widetilde{[L(Y_{1,0}Y_{1,4})]}_{q,\infty} = Y_{1,0}Y_{1,4} + Y_{1,2}^{-1}Y_{1,4}Y_{2,1}+Y_{1,4}^2Y_{2,5}^{-1} \,,
\\
\widetilde{[L(Y_{1,0}Y_{2,1})]}_{q,\infty} = Y_{1,0}Y_{2,1}+Y_{1,2}^{-1}Y_{2,1}^2+Y_{1,2}Y_{1,4}^2Y_{2,5}^{-2}+Y_{1,0}Y_{1,2}Y_{1,4}Y_{2,5}^{-1}
\\
\quad
+\Big(
\prod_{a \in\mathbb{Z}} t_a^{\frac{\mathcal{N}_a(1,0;1,2)-\mathcal{N}_a(1,0;2,1)}{2}}
+
\prod_{a \in\mathbb{Z}} t_a^{\frac{-\mathcal{N}_a(1,0;1,2)+\mathcal{N}_a(1,0;2,1)}{2}}
\Big)
Y_{1,4}Y_{2,1}Y_{2,5}^{-1}\,.
\end{array}
$$
The truncated $(q,\infty)$-characters of the other simple modules which arise by computing the $\ast$-product between the fundamental modules can be defined to coincide with the corresponding truncated $ q$-characters.

We prove $ \mathscr{K}_{\infty}(\mathscr{C}_{\mathcal{Q}^\flat}) $ possesses a toroidal cluster algebra structure with initial seed
$$ \Sigma =  (X_1,X_2,X_3,X_4,X_5,X_6, \widetilde{B})\,,$$
$$
\begin{array}{l l l}
X_1 = \widetilde{[V_2(q^5)]}_{q,\infty}\,,
& X_2 = \widetilde{[L(Y_{1,0}Y_{2,5})]}_{q,\infty} = Y_{1,0}Y_{2,5} + Y_{1,2}^{-1}Y_{2,1}Y_{2,5} \,,
& X_3 = \widetilde{[V_1(q^4)]}_{q,\infty}\,,
\\
X_4 =  \widetilde{[L(Y_{2,1}Y_{2,5})]}_{q,\infty} = Y_{2,1}Y_{2,5}\,,
& X_5 =  \widetilde{[L(Y_{1,0}Y_{1,2}Y_{1,4})]}_{q,\infty} = Y_{1,0}Y_{1,2}Y_{1,4}\,,
& X_6 = \widetilde{[V_2(q^3)]}_{q,\infty}
\,,\end{array}
$$
the last three variables being coefficients,
and
$$ \widetilde{B}^T = \begin{pmatrix} 0 & -1 & 1 & 0 & 0 & 0 \\ 1 & 0 & -1 & -1 & 1 & 0 \\ -1 & 1 & 0 & 0 & -1 & 1 \end{pmatrix}\,.$$
We exploit the quantum cluster algebra structure of $ \mathscr{K}_{t}(\mathscr{C}_{\mathcal{Q},A_{3}}) $ and the correspondence between $(q,t)$-characters of simple modules described in \cite[Theorem 12.9]{HO}.

If we choose the parametrization given by 
$$ 
t_{(1)} : =  \prod_{a \in \mathbb{Z}}t_a^{\mathcal{N}_a(1,0;2,1) } \,,\quad
t_{(2)} :=  \prod_{a \in \mathbb{Z}}t_a^{\mathcal{N}_a(1,0;1,2)+\mathcal{N}_a(1,0;2,1)} 
$$
the relations between the variables of the initial seed can be encoded in the matrices
\[
\Lambda_1 = \begin{pmatrix} 0 & 1 & -1 & 1 & 0 & -1 \\ -1 & 0 & 0 & 1 & 0 & -1 \\ 1 & 0 & 0 & 1 & 0 & -1 \\ -1 & -1 & -1 & 0 & 0 & 0 \\ 0 & 0 & 0 & 0 & 0 & 0 \\ 1 & 1 & 1 & 0 & 0 & 0 \end{pmatrix}\,,\quad
\Lambda_2 = \begin{pmatrix} 0 & -1 & 0 & -2 & -2 & -1 \\ 1 & 0 & 0 & -1 & -1 & 0 \\ 0 & 0 & 0 & -1 & -1 & 0 \\ 2 & 1 & 1 & 0 & -1 & 0 \\ 2 & 1 & 1 & 1 & 0 & 1 \\ 1 & 0 & 0 & 0 & -1 & 0 \end{pmatrix}\,.
\]

As $ \widetilde{B}^T\Lambda_1 = \big( 2\,\text{Id} \mid 0 \big) $ and $\widetilde{B}^T\Lambda_2 = \big( -\text{Id} \mid 0 \big) $, $ \Lambda_1$, $\Lambda_2$ are compatible with $ \widetilde{B}$.

The class of $\widetilde{[V_1(1)]}_{q,\infty}$
is obtained as a first-step mutation in direction $ 1 $ :
$$
\widetilde{[V_2(q^5)]}_{q,\infty} \ast  \widetilde{[V_1(1)]}_{q,\infty}
=
t_{(1)}^\frac12 t_{(2)}^{-\frac12} \widetilde{[L(Y_{1,0}Y_{2,5})]}_{q,\infty} + t_{(1)}^{-\frac12} \widetilde{[V_1(q^4)]}_{q,\infty}\,.
$$
We can recover the classes of the other fundamental modules as follows.
First perform the mutation in direction $2$.
We obtain a new cluster variable $ X_2'$ 
$$ X_2 \ast X_2' = t_{(1)}^{-\frac12} X_1X_5 + t_{(1)}^\frac12 t_{(2)}^{-\frac12} X_3X_4\,.$$
A direct computation shows that this provides the identification $ X_2' = \widetilde{[L(Y_{1,2}Y_{1,4})]}_{q,\infty} = Y_{1,2}Y_{1,4}$.
The mutated exchange matrix $ \widetilde{B}' $ is
$$ \widetilde{B}^T = \begin{pmatrix} 0 & 1 & 0 & -1 & 0 & 0 \\ -1 & 0 & 1 & 1 & -1 & 0 \\ 0 & -1 & 0 & 0 & 0 & 1 \end{pmatrix}\,.$$

Now, we know that for the underlying classical cluster algebra, the mutation in direction $ 1 $ produces a new cluster variable $ X_1'' $ which corresponds to the $ q $-character of the fundamental module $ V_2(1)$.
On the other hand, performing the mutation in direction $ 3 $ (of the same seed obtained after the first mutation in direction $ 2 $), we obtain a new cluster variable  $ X_3'' $ which corresponds to the $ q $-character of $ V_1(2)$.
We can compute the $ \ast$-product between the $(q,\infty)$-characters associated to the same modules:
\begin{equation}\label{20190829:eq1}
\begin{array}{l}
\displaystyle{
\vphantom{\Big(}
\widetilde{[V_2(q^5)]}_{q,\infty}
\ast
\widetilde{[V_2(q)]}_{q,\infty}
=
t_{(1)}^{\frac12} t_{(2)}^{-1} \widetilde{[L(Y_{2,1}Y_{2,5})]}_{q,\infty} + \big(t_{(1)}t_{(2)}\big)^{-\frac12} \widetilde{[L(Y_{1,2}Y_{1,4})]}_{q,\infty}
}\\\displaystyle{
\vphantom{\Big(}
\widetilde{[V_1(q^4)]}_{q,\infty}
\ast
\widetilde{[V_1(q^2)]}_{q,\infty}
=
t_{(1)}^{\frac12} t_{(2)}^{-\frac12} \widetilde{[L(Y_{1,2}Y_{1,4})]}_{q,\infty} + t_{(1)}^{-\frac12} \widetilde{[V_2(q^3)]}_{q,\infty}\,.
}\end{array}
\end{equation}

As we can check that both equations in \eqref{20190829:eq1} give bar-invariant expressions for the classes $\widetilde{[V_2(q)]}_{q,\infty}$ and $\widetilde{[V_1(q^2)]}_{q,\infty}$, we can conclude they are genuine toroidal exchange relations.
Hence, we get an inclusion of the toroidal Grothendieck ring $ \mathscr{K}_{\infty}(\mathscr{C}_{\mathcal{Q}^\flat}) $ in the toroidal cluster algebra of type $A_3$ with two parameters.

Note moreover that the identities in \eqref{20190829:eq1} are instances of the quantum $T$-system in type B, for $ k = 1$, as in \cite[Theorem 9.6]{HO}  (up to a swap of the indices $ 1, 2 \in I$).
\begin{rem} This parametrization coincides with the one given by the parameters $t_0$, $t_{-2}$ above. In particular, 
for the quantum tori which encode the commutation relations in the initial seed, we have $\Lambda_0 = \Lambda_1$ and $\Lambda_{-2}  = \Lambda_2$.
\end{rem}

\subsection{Toroidal $T$-systems}
In the proof of Theorem \ref{thm:toroidaliso} we have written a toroidal version of certain $T$-system relations in types $ADE$, using truncated $ (q,\infty)$-characters in the category $ \mathscr{C}_1$.
We would like to give a general formulation of toroidal $T$-systems for all simply-laced Lie algebras, in the toroidal Grothendieck ring $\mathscr{K}_{\infty}$. The case of $\overline{\mathfrak{g}} = sl_2$ has been analyzed in Section \ref{sec:casesl2}.

More generally, in type $A$, we would like to define the $ (q,\infty)$-characters
$$[W^{(i)}_{k,p}]_{q,\infty} := \chi_{q}(W^{(i)}_{k,p}) \in \mathscr{Y}_{\infty}\,, \quad i \in I\,, k \geq 1\,, p\in 2\mathbb{Z}+\xi_i \,.$$
One difficulty lies in proving that these classes actually lie in the toroidal Grothendieck ring, by establishing analogs of the recursive formula in Proposition \ref{20190228:prop1}. If true, we can use the same argument of \cite[Prop 5.6]{HL:qGro} to prove
\begin{equation}\label{20190115:eq7}
\begin{array}{l}
\displaystyle{
\vphantom{\Big(}
[W^{(i)}_{k,p}]_{q,\infty} \ast [W^{(i)}_{k,p+2}]_{q,\infty}
=
\prod_{s \in \mathbb Z} t_s^{\alpha_s(i,k)}
[W^{(i)}_{k-1,p+2}]_{q,\infty}
\ast
[W^{(i)}_{k+1,p}]_{q,\infty}
\,}\\
\displaystyle{
\vphantom{\Big(}
\phantom{[W^{(i)}_{k,p}]_{q,\infty} \ast [W^{(i)}_{k,p+2}]_{q,\infty}
=}
+
\ast_{j \sim i} \prod_{s \in \mathbb Z} t_s^{\gamma_s(i,k)} [W^{(j)}_{k,p+1}]_{q,\infty}
\,,}
\end{array}
\end{equation}
where $\alpha_s(i,k)$ and $\gamma_s(i,k)$ can be computed explicitly. We expect an analog picture to hold 
for general simply-laced types and we plan to come back to this in another publication.


\subsection{Larger categories and toroidal cluster monomials}
As recalled in Section \ref{cinqun}, for larger monoidal subcategories of finite-dimensional representations
of quantum affine algebras, such as the categories $\mathscr{C}_Q$ or the categories $\mathscr{C}_\ell$, ($\ell\geq 1$), 
it is known that their Grothendieck ring has a natural cluster algebra structure (see \cite{hlrev} for a recent review). 
Hence, in the view of our main result for the toroidal Grothendieck ring of the category $\mathscr{C}_1$ 
(Theorem \ref{thm:toroidaliso}) and from our general flatness result (Theorem \ref{mainflat}), we expect that 
the toroidal Grothendieck ring of the categories $\mathscr{C}_Q$ and of the categories $\mathscr{C}_\ell$ have 
a natural structure of a toroidal cluster algebras (with at least $2$ independent parameters). Moreover, in the case
of the category $\mathscr{C}_Q$, we expect to recover multi-parameter quantum groups, as explained above in 
the case of $sl_3$.

In this paper we focused on the structure of the various toroidal rings, which is much more involved than
for the classical structure (for instance, the non-deformed Grothendieck rings are just polynomial rings 
in the fundamental representations).
But one of the salient feature of cluster theory is the existence of the cluster monomials. In the context of
monoidal categorification of cluster algebras, they correspond to certain classes of simple representations in 
the Grothendieck ring (see Section \ref{cinqun}). Toroidal cluster monomials can be defined in a toroidal cluster algebras exactly as in 
the classical setting : these are monomials of toroidal cluster variables which belong to a same toroidal cluster. 
When the toroidal Grothendieck ring of a monoidal category has a toroidal cluster algebra structure, it is 
natural to wonder how the cluster monomials can be interpreted, or written in terms of a standard basis 
(that is ordered product of classes of fundamental representations). We expect they belong to a 
certain canonical basis, as for the quantum case (\cite{Nak:quiver, HL:qGro}).

\end{document}